\numberwithin{equation}{section}
\numberwithin{figure}{section}
\def\<{\left\langle}
\def\>{\right\rangle}
\newtheorem{exam}{Example}
\newcommand{\be}{\begin{equation}}
\newcommand{\ee}{\end{equation}}
\newcommand{\bes}{\begin{equation*}}
\newcommand{\ees}{\end{equation*}}
\newcommand{\ba}{\begin{array}}
\newcommand{\ea}{\end{array}}
\newcommand{\bea}{\begin{eqnarray}}
\newcommand{\eea}{\end{eqnarray}}
\newcommand{\beas}{\begin{eqnarray*}}
\newcommand{\eeas}{\end{eqnarray*}}
\newcommand{\fl}[2]{\frac{#1}{#2}}
\newcommand{\bx}{{\bm x}}
\newcommand{\im}{{\mathrm i}}
\newcommand{\cA}{{\mathcal A}}
\newcommand{\cB}{{\mathcal B}}
\newcommand{\cC}{{\mathcal C}}
\newcommand{\cD}{{\mathcal D}}
\newcommand{\bu}{{\bm u}}
\newcommand{\bv}{{\bm v}}
\newcommand{\bbf}{{\bm f}}
\newcommand{\bbg}{{\bm g}}
\newcommand{\bbs}{{\mathbf s}}
\newcommand{\bss}{{\mathbf S}}
\newcommand{\cH}{{\mathcal H}}
\newcommand{\sH}{{\mathsf H}}
\newcommand{\og}{{\omega}}
\newcommand{\diag}{{\rm diag}}
\newcommand{\supp}{{\rm supp}}
\newcommand{\spa}{{\rm span}}
\newcommand{\cI}{{\mathcal I_N}}
\newcommand{\ns}{{\rm null}}
\newcommand{\lf}{\left(}
\newcommand{\rg}{\right)}
\newcommand{\dif}{{\mathrm d}}
\newcommand{\p}{\partial}
\newcommand{\gm}{{\gamma}}
\newcommand{\ap}{{\alpha}}
\newcommand{\lag}{\left\langle}
\newcommand{\rag}{\right\rangle}
\theoremstyle{plain}
\newtheorem{remark}{Remark}[section]
\newtheorem{thm}{Theorem}[section]
\newtheorem{lem}{Lemma}[section]
\newtheorem{corollary}{Corollary}[section]
\newtheorem{lemma}{Lemma A.\hspace{-0.2em}}
\begin{document}

\title{An efficient Fourier spectral algorithm for the Bogoliubov-de Gennes excitation eigenvalue problem}


\author[tjfu]{Yu LI}
\ead{liyu@tjufe.edu.cn}
\author[scu]{Zhixuan LI}
\ead{zhixuan\_li@stu.scu.edu.cn}
\author[smtju]{Manting XIE\corref{5}}
\ead{mtxie@tju.edu.cn}
\author[tju]{Yong ZHANG}
\ead{Zhang\_Yong@tju.edu.cn}

\address[tjfu]{Coordinated Innovation Center for Computable Modeling in Management Science,\\
 Tianjin University of Finance and Economics, Tianjin 300222, China}
\address[scu]{School of Mathematics, Sichuan University, Chengdu 610064, China}
\address[smtju]{School of Mathematics and KL-AAGDM, Tianjin University, Tianjin, 300350, China}
\address[tju]{Center for Applied Mathematics and KL-AAGDM, Tianjin University, Tianjin 300072, China}

\cortext[5]{Corresponding author}

\begin{abstract}

In this paper, we propose an efficient Fourier spectral algorithm for an eigenvalue problem, that is, the Bogoliubov-de Gennes (BdG) equation
arsing from spin-1 Bose-Einstein condensates (BEC) to describe the elementary/collective excitations around the mean-field ground state.
The BdG equation is essentially a constrained eigenvalue/eigenfunction system.
Firstly, we investigate its analytical properties, including exact eigenpairs,
generalized nullspace, and bi-orthogonality of eigenspaces.
Secondly, by combining the standard Fourier spectral method for spatial discretization and a stable Gram-Schmidt bi-orthogonal algorithm,
we develop a subspace iterative solver for such a large-scale dense eigenvalue problem,
and it proves to be numerically stable, efficient, and accurate.
Our solver is matrix-free and the operator-function evaluation is accelerated by discrete Fast Fourier Transform (FFT) with almost optimal efficiency.
Therefore, it is memory-friendly and efficient for large-scale problems.
Furthermore, we give a rigorous and detailed numerical analysis on the stability and spectral convergence.
Finally, we present extensive numerical results to illustrate the spectral accuracy and efficiency, and
investigate the excitation spectrum and Bogoliubov amplitudes around the ground state in $1$--$3$ spatial dimensions.

\end{abstract}

\begin{keyword}
Eigenvalue problem, Bogoliubov-de Gennes excitations,
Fourier spectral method, spin-1 Bose-Einstein condensates, bi-orthogonal structure, large-scale problem
\end{keyword}

\maketitle


\section{Introduction}
Since the experiment realizations of degenerate spinor BEC with ultracold rubidium \cite{Hall1998Dynamics} and sodium \cite{Stamper-Kurn1998Optical},
where the internal degrees of freedom of spin were activated by the optical dipole potential trap, there followed a vast of
physical experiments of various spinor BEC  \cite{Matthews1998Dynamical,Stenger1998Spin} and theoretically investigations to
explore various peculiar  quantum phenomena \cite{Alkai-Japan}.
In spinor BEC, particles of different hyperfine states allow for different angular momentum spaces, leading to very rich spin structures.
Therefore, degenerate spinor quantum gases simultaneously exhibit magnetic and superfluid properties and shed light on a wide range of topics such as
 topological quantum structures, fractional quantum Hall effects, etc \cite{Kawaguchi2012Spinor,Stamper-Kurn2013Spinor}.

At temperature $T$ much smaller than the critical condensate temperature $T_c$ \cite{Bao2017Mathematical},
a spin-1 BEC is well described by the three-component wave function $\Psi(\bx, t) = (\psi_1(\bx, t),
\psi_0(\bx, t), \psi_{-1}(\bx, t))^\top$, whose evolution is governed by the coupled Gross-Pitaevskii
equations (GPEs) \cite{Ho1998Spinor,Stamper-Kurn2013Spinor,Zhang2003Mean}.
In dimensionless form, the $d$-dimensional ($d=1$, $2$, or $3$) GPEs could be unified as
\cite{Bao2017Mathematical,Bao2007mass}
\begin{subequations}\label{GPE}
\begin{align}
\im\partial_t \psi_1&=\left[-\frac{1}{2} \nabla^2  +V(\bx)+ \beta_n\rho + \beta_s\left(|\psi_0|^2+\rho_z\right)\right]
\psi_1 + \beta_s\psi_0^2\bar\psi_{-1},\label{spin1}\\
\im\partial_t \psi_0&= \left[-\frac{1}{2} \nabla^2  +V(\bx)+ \beta_n\rho +\beta_s\left(|\psi_1|^2+|\psi_{-1}|^2\right)\right]
\psi_0 + 2\beta_s\psi_{-1}\bar\psi_{0}\psi_1,\label{spin0}\\
\im\partial_t \psi_{-1}&= \left[-\frac{1}{2} \nabla^2  +V(\bx)+ \beta_n\rho
+\beta_s\left(|\psi_0|^2-\rho_z\right)\right]
\psi_{-1} + \beta_s\bar\psi_{1}\psi_0^2,\label{spin-1}
\end{align}
\end{subequations}
where $t$ denotes time and ${\bx}=x \in {\mathbb R}$, ${\bx}=(x, y)^\top \in {\mathbb R}^2$
and/or ${\bx}=(x, y, z)^\top \in {\mathbb R}^3$ is the Cartesian coordinate vector,
 $\rho= \sum_{j=-1}^1|\psi_j|^2$ is the density function and $\rho_z = |\psi_1|^2-|\psi_{-1}|^2$.
$V(\bx)$ is a real-valued  external potential that is case-dependent
and one common choice is the following harmonic trapping potential
\be\label{Vpoten}
V(\bx) = \frac{1}{2}\left\{\begin{array}{ll}
 \gm_x^2x^2, & d = 1,\\[0.3em]
 \gm_x^2x^2 + \gm_y^2y^2, & d = 2,\\[0.3em]
\gm_x^2x^2 + \gm_y^2y^2 + \gm_z^2z^2, \ \ &d = 3.
\end{array}\right.
\ee
Here, $\gm_\alpha>0\ (\alpha=x,y,z)$ are dimensionless constants proportional to the trapping frequency in the $\alpha$-direction.
Constant $\beta_n$ denotes the mean-field interaction, and $\beta_s$ represents spin-exchange interaction
with positive/negative $\beta_s$ corresponding to the anti-ferromagnetic/ferromagnetic case.

Introduce the triple of spin-$1$ Pauli matrices $\bss=(\bss_x,\bss_y,\bss_z)$ as
\bes
\bss_x = \frac{1}{\sqrt{2}}\begin{bmatrix}
0 & 1 & 0 \\
1 & 0 & 1 \\
0 & 1 & 0
\end{bmatrix},\ \ \
\bss_y = \frac{\im}{\sqrt{2}}\begin{bmatrix}
0 & -1 & 0 \\
1 & 0 & -1 \\
0 & 1 & 0
\end{bmatrix},\ \ \
\bss_z = \begin{bmatrix}
1 & 0 & 0 \\
0 & 0 & 0 \\
0 & 0 & -1
\end{bmatrix},
\ees
and the spin vector
$$\bbs(\Psi)=(\bbs_x(\Psi),\bbs_y(\Psi),\bbs_z(\Psi))^\top = (\Psi^\sH\bss_x\Psi,\Psi^\sH\bss_y\Psi,\Psi^\sH\bss_z\Psi)^\top,$$
where $\bar{\bm\zeta},~\bm\zeta^\mathsf{H}:= (\bar {\bm\zeta})^\top$ are conjugate and conjugate transpose of vector $\bm\zeta\in {\mathbb C}^{3\times 1}$ respectively.
Note that Pauli matrices are all Hermitian, i.e., $\bss_j^\sH= \bss_j$ for $j = x,y,z$.
The GPEs \eqref{GPE} can be written in a compact form
\be\label{GPEs}
\im\partial_t\Psi = \left(\big(L+\beta_n\rho\big)\mathbf{I}_3 + \beta_s\,\bss\cdot\bbs(\Psi)\right) \Psi,
\ee
where $L = -\frac{1}{2} \nabla^2  +V(\bx)$, $\mathbf{I}_3$ is identity matrix, and
\beas
\bss\cdot\bbs(\Psi) &:=& \bss_x\,\bbs_x(\Psi)+\bss_y\,\bbs_y(\Psi)+\bss_z\,\bbs_z(\Psi).
\eeas

The GPEs \eqref{GPE} conserves three important quantities:
the \textsl{mass}
\beas \label{constrain-C}
\mathcal N(\Psi(\cdot,t))=\|\Psi(\cdot,t)\|^{2}:=\int_{\mathbb{R}^{d}}\sum_{j=-1}^1|\psi_j(\bx)|^{2}\,\dif\bx\equiv \mathcal N(\Psi(\cdot,0)) = 1,\qquad t\ge0,
\eeas
the \textsl{magnetization} (with $-1\leq M\leq1$)
\beas \label{constrain-M}
\mathcal M(\Psi(\cdot,t)):=\int_{\mathbb{R}^{d}}\left(|\psi_1(\bx)|^{2}-|\psi_{-1}(\bx)|^{2}\right)\dif\bx\equiv \mathcal M(\Psi(\cdot,0)) = M,\qquad t\ge0,
\eeas
and \textsl{energy}
\beas\label{energy}\quad
\mathcal E(\Psi(\cdot,t))&:=&\int_{{\mathbb R}^d} \bigg\{\sum_{j=-1}^1\Big(\frac{1}{2}\left|\nabla\psi_j(\bx)
\right|^2 +V(\bx)|\psi_j(\bx)|^2\Big) +  \frac{\beta_n}{2}|\Psi(\bx)|^4 + \frac{\beta_s}{2}|\bbs(\Psi(\bx))|^2\bigg\}\,\dif \bx\nonumber\\
&\equiv& \mathcal E(\Psi(\cdot,0)),\quad \ t\geq0,
\eeas
where $|\Psi|$ is length of vector $\Psi$, that is, $ |\Psi|:= \sqrt{|\psi_1|^2 + |\psi_0|^2 + |\psi_{-1}|^2 } = \sqrt{\rho}$.
The ground state, denoted by $\Phi_g = (\phi_1^g,\phi_0^g,\phi_{-1}^g)^\top$, is defined as minimizer of the following problem
\be\label{groundDef}
\Phi_g =\arg\min_{\Phi\in \mathcal S_M} \; \mathcal{E}(\Phi),
\ee
where $\mathcal{S}_M$ is the constraint functional space
$$\mathcal{S}_M:=\Big\{\Phi= (\phi_1,\phi_0,\phi_{-1})^\top \ \big|\
	\|\Phi\|^2=1,~\|\phi_1\|^2-\|\phi_{-1}\|^2=M,~ \mathcal E(\Phi)<\infty\Big\}.$$
The ground state $\Phi_g$ satisfies the following Euler-Lagrange equations
\be\label{NLEP}
\Lambda\Phi = \mathbf{H} \Phi: = \Big(\big(L + \beta_n\rho\big)\mathbf{I}_3 + \beta_s\, \bss \cdot \bbs(\Phi)\Big) \Phi ,
\ee
where $\mathbf{H} = \diag(H_1,H_0,H_{-1})$ is the Hamiltonian operators,
$\Phi = (\phi_1,\phi_0,\phi_{-1})^{\top}$,
$\Lambda = \diag(\mu_1,\mu_0,\mu_{-1})$ with entry $\mu_j$, given below as the chemical potential associated with the $j$-th component \cite{Bao2017Mathematical,Bao2007mass},
\begin{equation}
\mu_j = \dfrac{\int_{\mathbb{R}^d}\overline{\phi_j} H_j\phi_j\dif\bx}{\int_{\mathbb{R}^d}|\phi_j|^2\dif\bx},
\quad j = 1,0,-1,
\label{chem-formula}
\end{equation}
satisfying relation $\mu_1+\mu_{-1} =2\mu_0$.
We can construct stationary states of \eqref{GPE} in the following way
\bea\Psi_s(\bx,t) =\diag(e^{-\im \mu_1t},e^{-\im \mu_0t},e^{-\im \mu_{-1}t}) \Phi_s(\bx),\eea
where $\Phi_s=(\phi^s_{1},\phi^s_0,\phi^s_{-1})^\top$ and $\left\{\mu_{1}, \mu_0,\mu_{-1}\right\}$ are solutions to the nonlinear eigenvalue problem \eqref{NLEP}.

\

On the mean-field level, the many-body effects are absent and the GPE \eqref{GPE} proves to be valid
for the spin-1 BEC \cite{Ho1998Spinor,Stamper-Kurn2013Spinor,Zhang2003Mean}.
However, due to the many-body effect of interatomic interactions, there are excitations in the system even in the lowest energy state, which could be regarded as quasi-particles and are known as elementary/collective excitations \cite{Bloch2008Many}.
We first need to go beyond the mean-field theory to study elementary/collective excitations.
Under the proper assumption, the elementary/collective excitation around the mean-field stationary state could be well described within the Bogoliubov theory,
which resulted in the celebrated Bogoliubov-de Gennes (BdG) equations \cite{Baillie2017Collective,Deng2020Spin,Gao2020Numerical,Huhtamaeki2011Elementary}.
As elementary/collective excitation of BEC provides fundamental information about the ultracold quantum state,
different phases of a trapped spin-1 BEC feature distinctive elementary/collective excitations, therefore,
they could be used to distinguish various phases.

\medskip

The first experimental measurements of the lowest elementary/collective modes of BECs were performed in \cite{Jin1996Collective}. Since then,
great enthusiasm has been stimulated to mathematical and numerical investigation of BdG equations in the past few decades \cite{Hu2004Analytical,Stringari1996Collective}.
Along the numerical front, there are many successful researches devoted to BdG corresponding to various BECs based on the well-known ARPACK library,
and we refer to \cite{Edwards1996Collective,Danaila2016Vector,Tang2022Spectrally,Deng2020Spin} for an incomplete list.
The spatial discretization of the wavefunction, eigenfunction, and linear differential operators can be categorized into finite difference method \cite{Edwards1996Collective,Gao2020Numerical}, finite-element method \cite{BdG-FreeFEM,Danaila2016Vector}
 and Fourier/sine spectral method \cite{Tang2022Spectrally,Zhang2021}.
For low dimension problem ($d=1,2$), the resulting eigensystem is solved with either direct eigensolver,
e.g., using ``eigs'' with MATLAB
or iterative subspace solver, such as ARPACK
or the locally optimal block preconditioned 4-d conjugate gradient \cite{Bai2012Minimization,Bai2013Minimization}.

Up to now, there have been relatively few theoretical studies on the excitation of spin-1 BEC.
Most numerical studies have focused on lower-dimensional cases ($d=1$ or $2$),
for example, the low-lying elementary/collective excitations of quasi-two-dimensional rotating systems  \cite{Deng2020Spin,Zhang2021}
and cylindrically symmetric systems \cite{Wilson2009Stability}.
Chen \textsl{et al.} \cite{Chen2017Collective} directly solved the BdG equations for spin-orbit BEC using the Arnoldi method,
while the Arnoldi method with ARPACK was adapted to study the dipolar spin-1 BEC \cite{Deng2020Spin}.
Recently, Tang \textsl{et al.} \cite{Tang2022Spectrally,Zhang2021} construct efficient
and spectral numerical algorithms for the BdG equations of dipolar BEC in higher-dimensional spaces.

As far as we know, there have been quite few mathematical or numerical studies on the BdG excitations of spin-1 BEC.
Hence, it is necessary to develop mathematical theories and construct an accurate and efficient eigensolver.
To numerically study the elementary/collective excitations, there are two main challenges:
(i) an accurate stationary state solver, especially the ground state solver;
(ii) an accurate and efficient BdG solver.
As for the computation of the ground state, various numerical methods have been proposed for the spin-1 BEC,
such as the gradient flow method \cite{Bao2017Mathematical,Bao2007mass},
the gradient flow with the Lagrange multiplier (GFLM) method \cite{Liu2021Normalized}, 
 regularized Newton method \cite{TianCai} and efficiently preconditioned conjugated gradient method (PCG) \cite{ATZ-CiCP-PCG} etc.
Here, we adopt PCG to compute the ground state due to its simplicity and effectiveness.

As is known, the structure of eigenvalue distribution and the generalized nullspace is of essential importance
to eigensolver performance in terms of efficiency, accuracy and stability.
It is noted that eigenvalues, coming in negative/positive pairs, are real and span over the real line to infinity,
and the eigenspaces corresponding to eigenvalues of different magnitudes are bi-orthogonal \cite{LiWangZhang}.
More importantly, for eigenvalue zero, the algebraic multiplicity may not be equal to its geometric multiplicity,
thus leading to a rich and complex generalized nullspace structure.
However, such peculiar inherent structure and analytical properties have not been considered or taken into account in the state-of-art popular solvers,
for example, the ARPACK library is much more a general-purpose eigensolver.
Direct application of the aforementioned solvers may encounter slow convergence or even divergence,
especially at eigenvalue zero, and the performance will definitely deteriorate in high-dimension problems.

To make it more challenging, the eigenfunctions are usually discretized by the Fourier spectral method, therefore, the resulted discrete eigensystem
is a fully-populated dense matrix, which renders prohibitively huge memory costs if one chooses to store the matrix explicitly.
Therefore, it is imperative to adopt some iterative algorithm, where one only needs to provide a matrix-vector product or operator-function evaluation,
so to bypass the huge explicit matrix storage, and gain good parallel scalability for large-scale high dimension problems.

\

Overall, the main objectives of this paper are fourfold:
\begin{itemize}
\item[1.] derive the BdG equations for spin-1 Bose-Einstein condensates around the ground state
and investigate its mathematical structures, including analytical eigenvalues/eigenfunctions,
generalized nullspace,
and eigenspaces' bi-orthogonality;

\item[2.] develop a bi-orthogonal structure-preserving Fourier spectral eigensolver, and
    propose an efficient evaluation of the matrix-vector product based on the Fourier spectral method;

\item[3.] prove the numerical stability and spectral convergence;

\item[4.] verify its spectral accuracy, efficiency, and scalability, and numerically study the excitation spectrum and
Bogoliubov amplitudes around the ground state with different parameters in 1D, 2D, and 3D.

\end{itemize}

The rest of the paper is organized as follows:
In Section \ref{sec:BdG_Prop}, we introduce the BdG equations and derive some analytical properties.
In Section \ref{Numer_alg}, we present details of the Fourier spectral method for space discretization
and propose an efficient eigensolver.
Extensive numerical examples are shown in Section \ref{NumResult} to confirm the performance of our method,
together with some applications to study the solutions to the BdG equations with different parameters in 1D, 2D, and 3D.
Finally, conclusions are drawn in Section \ref{sec:Conclusion}.

\section{The BdG equations and its properties}
\label{sec:BdG_Prop}

\subsection{The Bogoliubov-de Gennes equations}
\label{sec:BdG equation}
To characterize the elementary/collective excitations of a spin-1 BEC,
the Bogoliubov theory \cite{Baillie2017Collective,Deng2020Spin}
begins with the ground state $\Phi_g$ of the CGPEs \eqref{GPEs},
which are  also the solution of the nonlinear eigenvalue problems \eqref{NLEP} with corresponding chemical potentials $\mu_1$, $\mu_0$ and $\mu_{-1}$,
and assumes the evolution of CGPEs \eqref{GPEs} is around $\Phi_g$.
The corresponding wave function $\Psi$ takes the following form \cite{YiLowLying18,Tang2022Spectrally}
\begin{align}\label{waveAssump} \quad
\Psi(\bx,t) =\diag\big(e^{-\im \mu_1t},e^{-\im \mu_0t},e^{-\im \mu_{-1}t}\big)
\left[ \Phi_g(\bx) + \varepsilon \sum_{\ell=1}^\infty \Big( \bu^\ell(\bx) e^{-\im \omega_\ell t} + \bar\bv^\ell (\bx) e^{\im \omega_\ell t}\Big)\right],
\ \bx\in\mathbb{R}^d,\ t > 0.
\end{align}
Here, $0<\varepsilon \ll 1$ is a small quantity used to control the population of quasiparticle excitation, $\omega_\ell\in \mathbb{C}$
is the frequency of the excitations to be determined,
and $\bu^\ell, \bv^\ell \in H^1(\mathbb R^3;\mathbb C^3)$ are the Bogoliubov excitation modes
satisfying normalization condition
\bea
\label{constrain_org}
\int_{\mathbb R^d} \left( | \bu^\ell(\bx)|^2-  | \bv^\ell(\bx)|^2 \right) \dif \bx =
\int_{\mathbb R^d} \sum_{j=-1}^1\left( | u^\ell_j(\bx)|^2-  | v^\ell_j(\bx)|^2 \right) \dif \bx =  1, \quad \ell\in \mathbb Z^{+}.
\eea
where $\bu^\ell = (u_1^\ell,u_0^\ell,u_{-1}^\ell)^\top,\bv^\ell =(v_1^\ell,v_0^\ell,v_{-1}^\ell)^\top$.
Plugging \eqref{waveAssump} into \eqref{GPEs}, by collecting linear terms in $\varepsilon$ and separating frequency $e^{-i\omega_\ell t}$ and $e^{i\omega_\ell t}$,
we obtain the BdG equations as follows
\begin{gather}
\label{BdG_uv}
\begin{bmatrix}
\cA & \cB \\
\cC  & \cD
\end{bmatrix}
\begin{bmatrix}
\bu \\
 \bv
\end{bmatrix}
=\omega\begin{bmatrix}
\bu\\
\bv
\end{bmatrix},
\end{gather}
with constraint
\be
\label{constrain}
\int_{\mathbb{R}^d} \left(|\bu(\bx)|^2-|\bv(\bx)|^2\right)\,\dif\bx=1,
\ee
where all the subscripts $\ell$ are omitted hereafter for simplicity, $\omega$ is the excitation energy and the operators are given explicitly
\bea
\cA &=& (L\mathbf{I}_3 - \Lambda) + \beta_n\big(\rho\mathbf{I}_3 + \Phi_g\Phi_g^\sH\big) + \beta_s\bigg( \bss\cdot\bbs(\Phi_g) + \sum_{j=x,y,z}\bss_j\Phi_g\Phi_g^\sH\bss_j^\sH\bigg),\label{def_A}\\
\cB &=& \beta_n\Phi_g\Phi_g^\top + \beta_s\sum_{j=x,y,z}\bss_j\Phi_g\Phi_g^\top\bss_j^\top,\label{def_B}\\
\cC &=& -\bigg[\beta_n\bar\Phi_g\Phi_g^\sH + \beta_s\sum_{j=x,y,z}\bar\bss_j\bar\Phi_g\Phi_g^\sH\bss_j^\sH\bigg],\label{def_C}\\
\cD &=& -\bigg[(L\mathbf{I}_3 - \Lambda) + \beta_n\big(\rho\mathbf{I}_3 + \bar\Phi_g\Phi_g^\top\big) + \beta_s\bigg({\bar\bss\cdot\bbs(\Phi_g)} + \sum_{j=x,y,z}\bar\bss_j\bar\Phi_g\Phi_g^\top\bss_j^\top \bigg)\bigg].\label{def_D}
\eea

It is easy to check that $\mathcal A$ and $\mathcal D$ are both Hermitian operators,
i.e., $\cA^* = \cA, ~\cD^*=\cD$, and $\cB^*= -\cC$, where symbol $*$ denotes
the operator adjoint associated with inner product $\left\langle \bbf,\bbg \right\rangle
:=\sum_{j=-1}^1\int_{\mathbb{R}^d} {f_j}(\bx)\overline{g_j(\bx)}\, \dif \bx$.
Furthermore, we have $\overline{\cA\bbg} = -\cD\bar\bbg$,
$\overline{\cB\bbg} = -\cC\bar\bbg$, $\forall\,\bbg\in H^1(\mathbb{R}^d;\mathbb C^3)$,
and it immediately implies the operators's finite-dimension subspace representations,
denoted by matrices $A,B,C$ and $D$, are either Hermitian or symmetric, that is,
\bea
A^\sH = A, \quad D = -\overline A ,  \quad B^\top = B \quad \mbox{and} \quad C  = -\overline B.
\eea
The matrices representation of \eqref{BdG_uv} reads as follows
\begin{gather}
\begin{bmatrix}
A & B \\
-\overline B   &- \overline A
\end{bmatrix}
\begin{bmatrix}
{\mathbf u} \\
{\mathbf v}
\end{bmatrix}
=\omega\begin{bmatrix}
{\mathbf u} \\
{\mathbf v}
\end{bmatrix},
\end{gather}
and it coincides with the Bethe-Salpeter Hamiltonian (BSH) matrix arising from optical absorption spectrum analysis \cite{Shao2018structure}.

\subsection{Analytical properties}
\label{sec:Prop}
In this section, we derive some analytical properties of the BdG equations as well as the structure of generalized nullspace for both
ferromagnetic and anti-ferromagnetic cases. They might serve as benchmarks for numerical solutions or help to design an efficient eigensolver.

\begin{thm}[\textbf{Symmetric distribution}]\label{lem:negtive_eigenvalue}
If $\{\omega; \bu, \bv\}$ is a solution pair to the BdG equations
\eqref{BdG_uv}, then $\{-{\bar\omega};\bar{ \bv}, \bar{ \bu} \}$ is also a solution pair.
Furthermore,  If  $(\bu,\bv)$ satisfies the normalization constraint \eqref{constrain},
i.e., the elementary/collective excitations, the frequency $\omega$ is real.

\end{thm}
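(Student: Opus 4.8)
The plan is to handle the two assertions separately, since they rely on different structural facts about the block operator $M=\left[\begin{smallmatrix}\cA & \cB\\ \cC & \cD\end{smallmatrix}\right]$.

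For the symmetric distribution, I would start from the companion conjugation identities. From the stated relations $\overline{\cA\bbg}=-\cD\bar\bbg$ and $\overline{\cB\bbg}=-\cC\bar\bbg$, replacing $\bbg$ by $\bar\bbg$ and conjugating gives $\overline{\cC\bbg}=-\cB\bar\bbg$ and $\overline{\cD\bbg}=-\cA\bar\bbg$. Now write \eqref{BdG_uv} as the pair $\cA\bu+\cB\bv=\omega\bu$, $\cC\bu+\cD\bv=\omega\bv$ and take the complex conjugate of each. Substituting the four identities, the conjugate of the first equation becomes $-\cD\bar\bu-\cC\bar\bv=\bar\omega\bar\bu$, i.e. $\cC\bar\bv+\cD\bar\bu=-\bar\omega\bar\bu$, and the conjugate of the second becomes $-\cB\bar\bu-\cA\bar\bv=\bar\omega\bar\bv$, i.e. $\cA\bar\bv+\cB\bar\bu=-\bar\omega\bar\bv$. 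Reading these two lines in the order (second, first) is precisely $\left[\begin{smallmatrix}\cA & \cB\\ \cC & \cD\end{smallmatrix}\right]\left[\begin{smallmatrix}\bar\bv\\ \bar\bu\end{smallmatrix}\right]=-\bar\omega\left[\begin{smallmatrix}\bar\bv\\ \bar\bu\end{smallmatrix}\right]$, so $\{-\bar\omega;\bar\bv,\bar\bu\}$ is a solution pair. (Equivalently, in the matrix representation $M=\left[\begin{smallmatrix}A & B\\ -\overline B & -\overline A\end{smallmatrix}\right]$ one verifies $\overline M=-PMP$ with the flip matrix $P=\left[\begin{smallmatrix}0 & I\\ I & 0\end{smallmatrix}\right]$, then conjugates $Mx=\omega x$.) This part is pure bookkeeping; the only thing to watch is keeping the signs and the row-swap consistent.

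For the reality of $\omega$ under the normalization constraint, I would exploit the pseudo-Hermitian (indefinite-metric) structure. Pair the first block-row of \eqref{BdG_uv} against $\bu$ and the second against $-\bv$ in the inner product $\langle\cdot,\cdot\rangle$, forming $Q:=\langle\cA\bu+\cB\bv,\bu\rangle-\langle\cC\bu+\cD\bv,\bv\rangle$. On one hand, $\cA^*=\cA$ and $\cD^*=\cD$ make $\langle\cA\bu,\bu\rangle$ and $\langle\cD\bv,\bv\rangle$ real, while $\cC=-\cB^*$ gives $\langle\cC\bu,\bv\rangle=-\langle\bu,\cB\bv\rangle=-\overline{\langle\cB\bv,\bu\rangle}$, so the cross terms combine into $\langle\cB\bv,\bu\rangle+\overline{\langle\cB\bv,\bu\rangle}=2\operatorname{Re}\langle\cB\bv,\bu\rangle\in\mathbb R$; hence $Q\in\mathbb R$. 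On the other hand, inserting the eigen-equations gives $Q=\omega\|\bu\|^2-\omega\|\bv\|^2=\omega(\|\bu\|^2-\|\bv\|^2)$, which by \eqref{constrain} equals $\omega$. Therefore $\omega=Q\in\mathbb R$.

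I expect no real obstacle; the one conceptual point worth stressing is the role of the constraint. The computation shows $\omega(\|\bu\|^2-\|\bv\|^2)\in\mathbb R$, which determines $\omega$ only because $\|\bu\|^2-\|\bv\|^2=1\neq0$. Without normalization a genuinely complex $\omega$ is not excluded — in fact the same identity forces any such mode to satisfy $\|\bu\|=\|\bv\|$, so it cannot be normalized — and this is exactly where the hypothesis that $(\bu,\bv)$ is a bona fide elementary/collective excitation enters the argument.
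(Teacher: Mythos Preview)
Your proof is correct and follows essentially the same approach as the paper: the first part is exactly ``take the conjugate of \eqref{BdG_uv}'' spelled out via the identities $\overline{\cA\bbg}=-\cD\bar\bbg$, $\overline{\cB\bbg}=-\cC\bar\bbg$ (and their companions), and the second part is the same indefinite-metric pairing the paper uses, with your version making explicit why the resulting quantity $Q$ is real. The only cosmetic difference is that the paper pairs in the order $\langle\bu,\cdot\rangle$, $\langle\bv,\cdot\rangle$ to land on $\bar\omega$ on the right-hand side, while you pair in the order $\langle\cdot,\bu\rangle$, $\langle\cdot,\bv\rangle$ to get $\omega$; the conclusion is identical once the adjoint relations $\cA^*=\cA$, $\cD^*=\cD$, $\cB^*=-\cC$ are invoked.
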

\begin{proof} The first conclusion can be proved by taking the conjugate of Eqn. \eqref{BdG_uv}.
Multiplying the first/second equation of \eqref{BdG_uv} by $\bu$/$\bv$ respectively
and integrating each equation concerning $\bx$,
we subtract the second integration from the first one to obtain the following
\begin{equation}\label{real_eg3}
\lag \bu,\cA \bu\rag + \lag \bu,\cB \bv\rag - \lag\bv,\cC \bu\rag
- \lag\bv,\cD \bv\rag = \bar \omega\int_{\mathbb{R}^d}(|\bu(\bx)|^2-|\bv(\bx)|^2)\,\dif \bx.
\end{equation}
From Eqn. \eqref{constrain}, we can see that $\omega$ is real.

\end{proof}

As shown in \cite{Bao2017Mathematical}, under appropriate assumptions,  the ground states can be chosen as real-valued functions.
Therefore, in this article, we shall focus on the real ground state hereafter, and all operators involved are real.
To be more specific,
\bes\label{real-oper}
\cD = -\cA,\quad \cC = -\cB.
\ees
The BdG equation \eqref{BdG_uv} is a linear response eigenvalue problem of the following form
\begin{gather}
\label{BdG_new}
\begin{bmatrix}
\cA & \cB \\
-\cB  & -\cA
\end{bmatrix}
\begin{bmatrix}
\bu\\
\bv
\end{bmatrix}
=\omega\begin{bmatrix}
\bu\\
\bv
\end{bmatrix}.
\end{gather}
By applying a change of variables
\be\label{c-o-v}
\bu=\bbf+\bbg,~\bv=\bbf-\bbg,
\ee
the above equation can be reformulated
\bea \label{HpHm-eq}
\cH\begin{bmatrix}
	\bbf  \\ \bbg
	\end{bmatrix} :=
\begin{bmatrix}
	\mathcal O&\mathcal H_- \\
\mathcal H_+&\mathcal O
\end{bmatrix}
\begin{bmatrix}
	\bbf  \\ \bbg
	\end{bmatrix} =
	\omega
\begin{bmatrix}
	 \bbf  \\ \bbg
\end{bmatrix},
\eea
where $\mathcal H_+:=\cA+\cB$ and $\mathcal H_-:=\cA-\cB$ are both Hermitian operators, and the constraint \eqref{constrain} is reformulated in $(\bbf,\bbg)$ as
\bea\label{constrainfg}
\lag\bbf,{\bbg}\rag=\fl{1}{4}.\eea

\begin{remark}
If $\cH_+$ is positive definite, according to Lemma 2.3 of \cite{LiWangZhang},
the constrain \eqref{constrain} implies that the eigenvalue $\omega$ is nonzero, therefore,
we shall only focus on the {\bf non-zero} eigenvalues and their eigenfunctions.
\end{remark}

\begin{thm}[\textbf{Analytical eigenparis}]\label{lemAnalytical}
   Let $\Phi_g=(\phi_1^g,\phi_0^g,\phi_{-1}^g)^\top$ be real-valued ground states for Eqn. \eqref{GPE} with harmonic trapping potential \eqref{Vpoten},
we have analytical solutions to BdG equation \eqref{BdG_uv} as follows
\bea\label{eq:eig-pair}
\{\og_\alpha;\bu_\alpha,\bv_\alpha\}:=\left\{\gm_\ap ; \fl{1}{\sqrt{2}} \lf \gm_\alpha^{-\fl{1}{2}}
\p_\alpha \Phi_g-\gm_\alpha^{\fl{1}{2}}\alpha\Phi_g \rg, \fl{1}{\sqrt{2}} \lf \gm_\ap^{-\fl{1}{2}}\p_\ap\Phi_g+\gm_\ap^{\fl{1}{2}}\ap\Phi_g \rg \right\},
\eea
with $\alpha = x$ in one dimension, $\alpha = x,y$ in two dimensions and $\alpha = x, y, z$ in three dimensions.
\end{thm}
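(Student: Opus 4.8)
The plan is to reduce the BdG system \eqref{BdG_uv} to the symmetrized pair \eqref{HpHm-eq} via the change of variables \eqref{c-o-v}, for which we recall that here $\cD=-\cA$, $\cC=-\cB$. For the candidate pair in \eqref{eq:eig-pair} one has $\bbf=\tfrac12(\bu_\ap+\bv_\ap)=\tfrac{1}{\sqrt2}\gm_\ap^{-1/2}\p_\ap\Phi_g$ and $\bbg=\tfrac12(\bu_\ap-\bv_\ap)=-\tfrac{1}{\sqrt2}\gm_\ap^{1/2}\ap\Phi_g$, with $\og_\ap=\gm_\ap$, so that $\cH$ applied to $(\bbf,\bbg)$ equalling $\og_\ap(\bbf,\bbg)$ is equivalent to the two scalar equations $\mathcal H_+\bbf=\og_\ap\bbg$ and $\mathcal H_-\bbg=\og_\ap\bbf$. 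Substituting the explicit forms and cancelling the common powers of $\gm_\ap$, these become the operator identities
\begin{equation*}
\mathcal H_-(\ap\Phi_g)=-\p_\ap\Phi_g ,\qquad \mathcal H_+(\p_\ap\Phi_g)=-\gm_\ap^2\,\ap\Phi_g ,
\end{equation*}
which are the spin-$1$ analogue of the (generalized) Kohn/sloshing-mode relations. The rest of the proof establishes these two identities and then inverts \eqref{c-o-v}.

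For the first identity, the plan is to first verify $\mathcal H_-\Phi_g=0$. Expanding $(\cA-\cB)\Phi_g$ from \eqref{def_A}--\eqref{def_B} and using that $\Phi_g$ is real, every rank-structured block collapses: $\Phi_g\Phi_g^\sH\Phi_g=\rho\Phi_g$, $\bss_j\Phi_g\Phi_g^\sH\bss_j^\sH\Phi_g=\bbs_j(\Phi_g)\,\bss_j\Phi_g$, and the transposed blocks in $\cB$ produce the same quantities once one uses $\bss_j^\sH=\bss_j$ together with $\bss_y^\top=-\bss_y$ (the latter also forcing $\bbs_y(\Phi_g)=0$). Eliminating $(L\mathbf I_3-\Lambda)\Phi_g$ through the ground-state equation \eqref{NLEP} then gives $\cA\Phi_g=\cB\Phi_g=\beta_n\rho\Phi_g+\beta_s\,\bss\cdot\bbs(\Phi_g)\,\Phi_g$, hence $\mathcal H_-\Phi_g=0$. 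Next, split $\mathcal H_-=-\tfrac12\na\,\mathbf I_3+P$, where $P$ gathers all the non-Laplacian terms; each term of $P$ is either scalar multiplication (by $V$, by the entries of $\Lambda$, or by $\rho$) or acts pointwise through $\Phi_g$, so $P(\ap\Phi_g)=\ap\,P\Phi_g$. With the product rule $\na(\ap\Phi_g)=\ap\,\na\Phi_g+2\p_\ap\Phi_g$ this yields $\mathcal H_-(\ap\Phi_g)=\ap\,\mathcal H_-\Phi_g-\p_\ap\Phi_g=-\p_\ap\Phi_g$.

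For the second identity, the plan is to differentiate the nonlinear eigenvalue equation \eqref{NLEP}, $\mathbf H\Phi_g=\Lambda\Phi_g$, with respect to the coordinate $\ap$. Since $\Lambda$ is constant, $\p_\ap$ commutes with $\na$, and $\p_\ap V=\gm_\ap^2\,\ap$ for the harmonic trap \eqref{Vpoten}, one obtains $(\mathbf H-\Lambda)\p_\ap\Phi_g+\gm_\ap^2\,\ap\Phi_g+\beta_n(\p_\ap\rho)\Phi_g+\beta_s\big(\bss\cdot\p_\ap\bbs(\Phi_g)\big)\Phi_g=0$. The crux is to rewrite the two nonlinear-response terms: differentiating $\rho=\Phi_g^\sH\Phi_g$ and $\bbs_j(\Phi_g)=\Phi_g^\sH\bss_j\Phi_g$ and using that $\Phi_g$ and $\p_\ap\Phi_g$ are real, one gets $(\p_\ap\rho)\Phi_g=(\Phi_g\Phi_g^\sH+\Phi_g\Phi_g^\top)\p_\ap\Phi_g$ and $\big(\bss\cdot\p_\ap\bbs(\Phi_g)\big)\Phi_g=\sum_{j=x,y,z}\big(\bss_j\Phi_g\Phi_g^\sH\bss_j^\sH+\bss_j\Phi_g\Phi_g^\top\bss_j^\top\big)\p_\ap\Phi_g$; these are precisely the extra blocks that distinguish $\mathcal H_+=\cA+\cB$ from $\mathbf H-\Lambda$, so the left-hand side is exactly $\mathcal H_+(\p_\ap\Phi_g)+\gm_\ap^2\,\ap\Phi_g$, giving $\mathcal H_+(\p_\ap\Phi_g)=-\gm_\ap^2\,\ap\Phi_g$. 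Substituting the two identities back and inverting \eqref{c-o-v} reproduces \eqref{eq:eig-pair}; moreover $\lag\bbf,\bbg\rag=\tfrac14$ follows by integrating $\int_{\bR^d}\ap\,\p_\ap\rho\,\dif\bx=-\int_{\bR^d}\rho\,\dif\bx=-1$ by parts, so \eqref{constrain} (equivalently \eqref{constrainfg}) holds automatically.

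I expect the main obstacle to be the algebraic bookkeeping in the two identities rather than anything conceptual: one must keep track of the Hermitian-versus-transpose structure of the $\beta_n$ and $\beta_s$ blocks in \eqref{def_A}--\eqref{def_B} (with the $\bss_y$ block behaving oppositely under transpose), and verify that the $\ap$-derivative of the spin-$1$ nonlinearity $\beta_n\rho+\beta_s\,\bss\cdot\bbs(\cdot)$ regenerates exactly the rank-structured additions separating $\mathcal H_+$ from $\mathbf H-\Lambda$, while the analogous additions for $\mathcal H_-$ cancel on $\Phi_g$; this is the spin-$1$ \emph{gauge covariance} that makes the (generalized) Kohn mode go through.
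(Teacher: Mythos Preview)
Your proposal is correct and follows essentially the same route as the paper: reduce to the $(\bbf,\bbg)$ system \eqref{HpHm-eq}, establish the two key identities $\mathcal H_+(\p_\ap\Phi_g)=-\gm_\ap^2\,\ap\Phi_g$ (by differentiating \eqref{NLEP}) and $\mathcal H_-(\ap\Phi_g)=-\p_\ap\Phi_g$, and then verify the normalization \eqref{constrainfg} via integration by parts. The paper's proof is considerably terser---in particular it asserts the $\mathcal H_-$ identity without justification---whereas you supply the missing mechanism by first showing $\mathcal H_-\Phi_g=0$ and then using the Laplacian commutator $[\na,\ap]=2\p_\ap$; this extra detail is helpful and fully consistent with the paper's argument.
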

\begin{proof}
For simplicity, we only prove the $\alpha =x$ case and extensions to other spatial variables are similar.
Differentiate \eqref{NLEP} with respect to $x$ and combine $\mathcal H_+$ definition, we derive
\beas\label{Hpp}
\mathcal H_+(\p_x\Phi_g)=\gm_x(-\gm_xx\Phi_g),
\eeas
with $\p_x\Phi_g=(\p_x\phi_1^g,\p_x\phi_0^g,\p_x\phi_{-1}^g)^\top$.
Apply $\mathcal H_-$ on $-\gm_xx\Phi_g$, we have
\beas\label{Hmm}
\mathcal H_-(-\gm_xx\Phi_g)=\gm_x(\p_x\Phi_g).
\eeas
Therefore, we can see that $(\partial_x\Phi_g^\top, - \gamma_xx\Phi_g^\top)^\top$
solves \eqref{HpHm-eq} with $\omega =\gamma_x$.
With simple calculations, we have
$$\lag -\gm_xx\Phi_g,\p_x\Phi_g\rag
=\int_{\mathbb{R}^2}\sum_{j=-1}^1\big(-\gm_xx\phi_j^g\p_x\phi_j^g\big)\,{\rm d}x{\rm
d}y = \fl{\gm_x}{2}.$$
Therefore, the normalized function
$(\bbf,\bbg) = \left(\fl{1}{\sqrt{2}}\gm_x^{-\fl{1}{2}}\p_x\Phi_g, -\fl{1}{\sqrt{2}}\gm_x^{\fl{1}{2}}x\Phi_g\right)$
solves \eqref{HpHm-eq} with eigenvalue $\omega  = \gamma_x$. Finally,
we derive the following analytic solutions
\beas
\og_x=\gm_x, \quad \bu_x=\fl{1}{\sqrt{2}}\Big(\gm_x^{-\fl{1}{2}}\p_x\Phi_g-\gm_x^{\fl{1}{2}}x\Phi_g\Big), \quad
\bv_x=\fl{1}{\sqrt{2}}\Big(\gm_x^{-\fl{1}{2}}\p_x\Phi_g+\gm_x^{\fl{1}{2}}x\Phi_g\Big).
\eeas
\end{proof}

\begin{thm}[\textbf{Biorthogonality}]\label{lem:orth}
Assume $\{\omega_i; \bbf_i, \bbg_i\}_{i=1}^2$ are eigenpairs of Eqn. \eqref{HpHm-eq}
with eigenvalues of different magnitudes, i.e., $|\omega_1|\neq|\omega_2|$,
the following biorthogonal properties hold true
\begin{equation*}
\langle \bbf_1,\bbg_2 \rangle
=
\langle \bbf_2,\bbg_1 \rangle
=0.
\end{equation*}
\end{thm}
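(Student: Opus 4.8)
The plan is to run the standard Hermitian-pairing argument adapted to the block operator $\cH$ in \eqref{HpHm-eq}, in the spirit of the proof of Theorem~\ref{lem:negtive_eigenvalue}. Writing the block eigenproblem componentwise, each eigenpair $\{\omega_i;\bbf_i,\bbg_i\}$ satisfies $\mathcal H_-\bbg_i=\omega_i\bbf_i$ and $\mathcal H_+\bbf_i=\omega_i\bbg_i$ for $i=1,2$. Pairing $\mathcal H_+\bbf_1=\omega_1\bbg_1$ with $\bbf_2$ and using $\mathcal H_+^*=\mathcal H_+$ gives
\[
\omega_1\langle\bbg_1,\bbf_2\rangle=\langle\mathcal H_+\bbf_1,\bbf_2\rangle=\langle\bbf_1,\mathcal H_+\bbf_2\rangle=\bar\omega_2\langle\bbf_1,\bbg_2\rangle ,
\]
and pairing $\mathcal H_-\bbg_1=\omega_1\bbf_1$ with $\bbg_2$ and using $\mathcal H_-^*=\mathcal H_-$ gives, symmetrically,
\[
\omega_1\langle\bbf_1,\bbg_2\rangle=\langle\mathcal H_-\bbg_1,\bbg_2\rangle=\langle\bbg_1,\mathcal H_-\bbg_2\rangle=\bar\omega_2\langle\bbg_1,\bbf_2\rangle .
\]

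Next I would read these two identities as a homogeneous linear system in the scalars $a:=\langle\bbg_1,\bbf_2\rangle$ and $b:=\langle\bbf_1,\bbg_2\rangle$, namely $\omega_1 a=\bar\omega_2 b$ and $\omega_1 b=\bar\omega_2 a$. Multiplying the first identity by $\omega_1$ and inserting the second yields $\omega_1^2 a=\bar\omega_2^2 a$, and likewise $\omega_1^2 b=\bar\omega_2^2 b$, i.e. $(\omega_1^2-\bar\omega_2^2)\,a=(\omega_1^2-\bar\omega_2^2)\,b=0$. Since $\omega_1^2=\bar\omega_2^2$ would force $\omega_1=\pm\bar\omega_2$ and hence $|\omega_1|=|\bar\omega_2|=|\omega_2|$, the hypothesis $|\omega_1|\neq|\omega_2|$ makes the factor $\omega_1^2-\bar\omega_2^2$ nonzero, so $a=b=0$. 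Finally, since $\langle\bbf_2,\bbg_1\rangle=\overline{\langle\bbg_1,\bbf_2\rangle}=\bar a$, we conclude $\langle\bbf_1,\bbg_2\rangle=\langle\bbf_2,\bbg_1\rangle=0$, which is the claim.

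Everything here except the last algebraic step is routine inner-product bookkeeping; the one place to be careful — the main obstacle, such as it is — is precisely the passage $\omega_1^2=\bar\omega_2^2\Rightarrow|\omega_1|=|\omega_2|$, i.e. making sure the argument does not tacitly assume the eigenvalues are real. It does not: $|\bar\omega_2|=|\omega_2|$ holds for every complex number, so the implication is valid verbatim and the biorthogonality persists even for complex $\omega$. In the physically relevant regime where $\mathcal H_+$ is positive definite the $\omega_i$ are in fact real, and then $\omega_1^2-\bar\omega_2^2=\omega_1^2-\omega_2^2\neq 0$ is immediate from $|\omega_1|\neq|\omega_2|$.
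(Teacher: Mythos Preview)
Your proof is correct and is essentially the same Hermitian-pairing argument as the paper's: the paper compresses your two first-order pairings into a single line by using the squared operators directly, writing $\omega_i^2\langle\bbf_i,\bbg_j\rangle=\langle\mathcal H_-\mathcal H_+\bbf_i,\bbg_j\rangle=\langle\bbf_i,\mathcal H_+\mathcal H_-\bbg_j\rangle=\omega_j^2\langle\bbf_i,\bbg_j\rangle$ and concluding from $|\omega_i|\neq|\omega_j|$. Your version is slightly more careful in that it tracks the complex conjugate $\bar\omega_2$ and justifies the implication $\omega_1^2=\bar\omega_2^2\Rightarrow|\omega_1|=|\omega_2|$ without assuming real eigenvalues, whereas the paper tacitly treats the $\omega_j$ as real.
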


\begin{proof}
Using Eqn. \eqref{HpHm-eq}, we have
\bes
\og_i^2\lag\bbf_i,\bbg_j\rag  = \lag\mathcal H_-\mathcal H_+\bbf_i,\bbg_j\rag = \lag\bbf_i,\mathcal H_+\mathcal H_-\bbg_j\rag =  \og_j^2\lag\bbf_i,\bbg_j\rag,
\ees
which means
\bes
(\og_i^2 - \og_j^2)\lag\bbf_i,\bbg_j\rag = 0.
\ees
Since $|\og_i|\neq|\og_j|$, we obtain $\lag\bbf_i,\bbg_j\rag = 0, \ \  \mbox{for}\ i\neq j$.
\end{proof}

\subsection{Generalized nullspace}
\label{sec:rootspace}
As is known, the algebraic/geometric multiplicity of eigenvalue zero and the structure of its associated generalized nullspace
are of great importance to the eigensolver's performance in terms of convergence, accuracy and efficiency.
The generalized nullspace of $\mathcal H$ is defined as nullspace of $\mathcal H^{p}$ for some positive integer $p$ such that
\begin{equation*}
\ns(\cH^p)=\ns(\cH^{p+1}).
\end{equation*}
For compact operator $\cH$, the integer $p$ is finite and $\ns(\cH^p)=\ns(\cH^{q})$ holds true for any greater integer $q$, i.e, $q \geq p$.
In this subsection, we shall elaborate on the structure of generalized nullspace for ferromagnetic and antiferromagnetic systems, which is of course closely connected with the nullspace of $\mathcal H_-$ and $\mathcal H_+$.

\vspace{0.35cm}

\noindent {\bf Case I: Ferromagnetic system ($\beta_s <0$)}.

\vspace{0.1cm}

\begin{lem}[{Single Mode approximation (SMA) \cite[Theorem 4.2]{Bao2017Mathematical}}]
\label{lemma:ferr}
If $M\in (-1,1)$, $\beta_s <0$ and $\beta_n+\beta_s\geq 0$,
there exists a ground state solution $\Phi_g = {\bm a}\,\phi_g,~{\bm a} = \big(\frac{1+M}{2},(\frac{1-M^2}{2})^{1/2},\frac{1-M}{2}\big)^\top$,
and the chemical potentials are identical, i.e., $\mu_1=\mu_0=\mu_{-1}$.
Here, $\phi_g$ is the unique positive minimizer of the following energy functional
$$E_{\textrm{SMA}}(\phi) = \int_{\mathbb{R}^d}\left[\frac{1}{2}|\nabla\phi|^2 + V(\bx)|\phi|^2 + \frac{\beta_n+\beta_s}{2}|\phi|^4\right]\dif\bx,$$
under constraint
$\mathcal S :=\left\{\phi \in L^2(\mathbb R^d) \big|
\|\phi\|^2 = 1, 
E_{\textrm{SMA}}(\phi) < \infty \right\}.$
\end{lem}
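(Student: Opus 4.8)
\emph{Proof proposal.} The plan is to bound the three–component energy $\mathcal E$ from below by the scalar functional $E_{\textrm{SMA}}$ evaluated at $\sqrt{\rho}$, then to exhibit the single–mode ansatz ${\bm a}\,\phi_g$ as a minimizer that saturates this bound, and finally to read off the chemical potentials from the Euler--Lagrange system \eqref{NLEP}. For the lower bound I would first record the pointwise identity $|\bbs(\Psi)|^2 = (|\psi_1|^2-|\psi_{-1}|^2)^2 + 2\,|\bar\psi_1\psi_0+\bar\psi_0\psi_{-1}|^2$ coming directly from $\bss_x,\bss_y,\bss_z$, and deduce from it the sharp bound $|\bbs(\Psi)(\bx)|\le\rho(\bx)$; indeed $\rho^2-|\bbs(\Psi)|^2 \ge \big(|\psi_0|^2-2|\psi_1|\,|\psi_{-1}|\big)^2\ge 0$, so equality forces $|\psi_0|^2 = 2|\psi_1|\,|\psi_{-1}|$ together with phase alignment of $\bar\psi_1\psi_0$ and $\bar\psi_0\psi_{-1}$, i.e.\ at a.e.\ $\bx$ the spinor $\Psi(\bx)$ is a scalar multiple of a fixed unit spinor ${\bm\zeta}$ with $|\bbs({\bm\zeta})|=1$. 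Combining this with the gradient–of–modulus inequality $\sum_j|\nabla\psi_j|^2\ge|\nabla\sqrt\rho|^2$ (Cauchy--Schwarz applied to $\nabla\rho = 2\sum_j|\psi_j|\nabla|\psi_j|$ together with $|\nabla|\psi_j||\le|\nabla\psi_j|$), and using $\beta_s<0$ to turn $|\bbs|^2\le\rho^2$ into $\tfrac{\beta_n}{2}\rho^2+\tfrac{\beta_s}{2}|\bbs|^2\ge\tfrac{\beta_n+\beta_s}{2}\rho^2$, I obtain $\mathcal E(\Phi)\ge E_{\textrm{SMA}}(\sqrt\rho)\ge\inf_{\phi\in\mathcal S}E_{\textrm{SMA}}(\phi)$ for every $\Phi\in\mathcal S_M$. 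Existence, strict positivity and uniqueness of the scalar minimizer $\phi_g$ are the standard single–component GPE facts for defocusing nonlinearity $\beta_n+\beta_s\ge0$ (coercivity and weak lower semicontinuity; convexity of $E_{\textrm{SMA}}$ as a functional of $\rho=\phi^2$; strong maximum principle).

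Next I would check that the ansatz saturates every inequality above. A direct computation with ${\bm a}=\big(\tfrac{1+M}{2},(\tfrac{1-M^2}{2})^{1/2},\tfrac{1-M}{2}\big)^\top$ gives $|{\bm a}|^2=1$, $a_1^2-a_{-1}^2=M$ and $a_0^2=2a_1a_{-1}$, so ${\bm a}$ lies on the ferromagnetic manifold with $|\bbs({\bm a})|=1$. Hence $\Phi_g={\bm a}\,\phi_g\in\mathcal S_M$, $\rho=\phi_g^2$, $|\bbs(\Phi_g)|^2 = \phi_g^4\,|\bbs({\bm a})|^2 = \rho^2$ and $\sum_j|\nabla(a_j\phi_g)|^2 = |\nabla\phi_g|^2$; all three inequalities become equalities, so $\mathcal E({\bm a}\,\phi_g)=E_{\textrm{SMA}}(\phi_g)=\inf_{\mathcal S_M}\mathcal E$ and ${\bm a}\,\phi_g$ is a ground state.

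For the chemical potentials I would use that, being a ground state, $\Phi_g={\bm a}\,\phi_g$ solves \eqref{NLEP}. Using $\rho=\phi_g^2$ and $\bbs(\Phi_g)=\phi_g^2\,\bbs({\bm a})$,
\[
\mathbf{H}\Phi_g = \big(L\phi_g+\beta_n\phi_g^3\big){\bm a} + \beta_s\phi_g^3\,\big(\bbs({\bm a})\cdot\bss\big){\bm a}.
\]
The crucial point is that the fully polarized spinor ${\bm a}$ is the maximal eigenvector of $\bbs({\bm a})\cdot\bss = \bn\cdot\bss$ (with $\bn=\bbs({\bm a})$ a unit vector) with eigenvalue $+1$, i.e.\ $(\bbs({\bm a})\cdot\bss){\bm a}={\bm a}$; this follows from a short $3\times3$ computation (equivalently from spin-1 representation theory). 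Therefore $\mathbf{H}\Phi_g = \big(L\phi_g+(\beta_n+\beta_s)\phi_g^3\big){\bm a}$, and since the scalar Euler--Lagrange equation for the $E_{\textrm{SMA}}$–minimizer reads $L\phi_g+(\beta_n+\beta_s)\phi_g^3 = \mu\,\phi_g$ for some $\mu\in\mathbb R$, we get $\mathbf{H}\Phi_g = \mu\,{\bm a}\,\phi_g = \mu\,\Phi_g$. Hence $\Lambda=\mu\,\mathbf{I}_3$, i.e.\ $\mu_1=\mu_0=\mu_{-1}$.

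The main obstacle is the sharp pointwise estimate $|\bbs(\Psi)|\le\rho$ together with its equality analysis: this is exactly what forces the single–mode structure and identifies the admissible spinors, and it is the one step that is genuinely spin–specific rather than a quotation from scalar GPE theory. Closely related is the algebraic fact $(\bbs({\bm a})\cdot\bss){\bm a}={\bm a}$, which confirms that the explicit spinor ${\bm a}$ really sits on the ferromagnetic manifold; the remaining ingredients (the diamagnetic inequality, coercivity/lower semicontinuity, convexity in $\rho$) are routine.
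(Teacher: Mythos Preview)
The paper does not supply its own proof of this lemma: it is quoted verbatim as \cite[Theorem~4.2]{Bao2017Mathematical} and used as a black box in the subsequent analysis of the generalized nullspace. So there is no ``paper's proof'' to compare against here.

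That said, your argument is correct and is essentially the standard route to the SMA result (and is close in spirit to what the cited reference does). The three ingredients you isolate --- the pointwise spin bound $|\bbs(\Psi)|\le\rho$ with equality characterization, the diamagnetic-type inequality $\sum_j|\nabla\psi_j|^2\ge|\nabla\sqrt{\rho}|^2$, and the reduction to the scalar defocusing GP minimization for $\beta_n+\beta_s\ge 0$ --- combine exactly as you describe to yield the lower bound $\mathcal E(\Phi)\ge E_{\textrm{SMA}}(\sqrt{\rho})$, and the explicit spinor ${\bm a}$ saturates every inequality. Your verification that $(\bbs({\bm a})\cdot\bss){\bm a}={\bm a}$, which collapses the coupled Euler--Lagrange system \eqref{NLEP} to the single scalar equation $L\phi_g+(\beta_n+\beta_s)\phi_g^3=\mu\phi_g$ and hence gives $\mu_1=\mu_0=\mu_{-1}$, is also correct (it can be checked directly with ${\bm a}$ real: $\bbs_x({\bm a})=\sqrt{1-M^2}$, $\bbs_y({\bm a})=0$, $\bbs_z({\bm a})=M$). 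Nothing is missing; if anything, you have supplied more than the present paper requires, since only the \emph{statement} of the lemma is invoked downstream (in Theorems~\ref{SMA} and~\ref{thm:ferromagnetic}).
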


\begin{thm}[\textbf{Nullspace of $\cH_-$ and $\cH_+$ for ferromagnetic system}]\label{SMA}
Under conditions in Lemma \ref{lemma:ferr}, we derive the nullspace of $\mathcal H_-$ and  $\mathcal H_+$ as follows
\begin{equation}
	\label{equ:null_H_ferr}
\ns(\cH_-)=\spa \{\Phi_1,\Phi_2\},\qquad
\ns(\cH_+)=\spa \{\Phi_2\},
\end{equation}
where $\Phi_1 = \Phi_g={\bm a}\,\phi_g$, $\Phi_2 = {\bm b}\,\phi_g$ with ${\bm b} = \big((\frac{1-M^2}{2})^{1/2}, -M,-(\frac{1-M^2}{2})^{1/2}\big)^\top$.
\end{thm}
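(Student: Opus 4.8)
The plan is to verify the two span identities directly from the definitions of $\cH_\pm=\cA\pm\cB$ and the explicit single-mode form $\Phi_g={\bm a}\,\phi_g$ supplied by Lemma~\ref{lemma:ferr}. First I would assemble the operators acting on functions of the form ${\bm c}\,\phi_g$ for a constant vector ${\bm c}\in\mathbb R^3$: using $\rho=|\phi_g|^2$, $\bbs(\Phi_g)=(\ {\bm a}^\sH\bss_x{\bm a},\,{\bm a}^\sH\bss_y{\bm a},\,{\bm a}^\sH\bss_z{\bm a}\ )^\top\,|\phi_g|^2$ (with $\bbs_y(\Phi_g)=0$ since ${\bm a}$ is real, and $\bbs_z(\Phi_g)=M$), and the nonlinear eigenvalue relation \eqref{NLEP}, which under Lemma~\ref{lemma:ferr} reads $\big((L+\beta_n|\phi_g|^2)\mathbf I_3+\beta_s(\bss\cdot\bbs(\Phi_g))\big){\bm a}\,\phi_g=\mu\,{\bm a}\,\phi_g$ with a single common $\mu$. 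The key observation is that on the single-mode ansatz ${\bm c}\,\phi_g$ every term in $\cA,\cB$ becomes $|\phi_g|^2$ times a constant $3\times3$ matrix acting on ${\bm c}$, except the kinetic/potential part $L-\Lambda=L-\mu\mathbf I_3$; so $\cH_\pm({\bm c}\,\phi_g)=(L-\mu)\phi_g\,{\bm c}+|\phi_g|^2\,(N_\pm{\bm c})\,\phi_g$ for explicit matrices $N_\pm$. Since $(L+\beta_n|\phi_g|^2+\text{const}\cdot|\phi_g|^2)\phi_g$ appears, I would use \eqref{NLEP} componentwise to eliminate $L\phi_g$: concretely $\cH_\pm$ applied to ${\bm c}\,\phi_g$ collapses, after substituting $L\phi_g=\mu\phi_g-(\beta_n+\beta_s)|\phi_g|^2\phi_g$ (the scalar SMA identity $E_{\mathrm{SMA}}'$), to $|\phi_g|^2\,(\widetilde N_\pm{\bm c})\,\phi_g$ for reduced constant matrices $\widetilde N_\pm$. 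The problem thereby reduces to the purely linear-algebraic task of computing $\ker\widetilde N_-$ and $\ker\widetilde N_+$ and checking they are $\spa\{{\bm a},{\bm b}\}$ and $\spa\{{\bm b}\}$ respectively.

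Second, I would carry out that finite-dimensional computation. For $\cH_-=\cA-\cB$: one should find that the $\beta_n$ contributions (the $\Phi_g\Phi_g^\sH=\Phi_g\Phi_g^\top$ pieces) cancel between $\cA$ and $-\cB$, and similarly the spin-interaction "outer-product" sums $\sum_j\bss_j\Phi_g\Phi_g^\sH\bss_j^\sH$ cancel against $\sum_j\bss_j\Phi_g\Phi_g^\top\bss_j^\top$ (they are equal since $\Phi_g$ is real), leaving $\cH_-({\bm c}\,\phi_g)=\big[(L-\mu)\mathbf I_3+\beta_n|\phi_g|^2\mathbf I_3+\beta_s\,\bss\cdot\bbs(\Phi_g)\big]{\bm c}\,\phi_g$, i.e. $\cH_-$ restricted to single-mode vectors is exactly the Hessian-type operator $\mathbf H-\Lambda$ from \eqref{NLEP}. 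Then ${\bm a}\,\phi_g=\Phi_g$ is in its kernel by \eqref{NLEP} itself, and ${\bm b}\,\phi_g$ is in its kernel because ${\bm b}$ is the (real) eigenvector of the constant matrix $\beta_n\mathbf I_3+\beta_s\,\mathrm{diag}$-type block orthogonal to ${\bm a}$ with the matching eigenvalue — this is where one uses $\beta_s<0$ and the precise coefficients $\tfrac{1\pm M}{2}$, $(\tfrac{1-M^2}{2})^{1/2}$ to check $({\bm a}^\sH\bss\,{\bm a})\cdot(\bss\,{\bm b})$ produces the right multiple of ${\bm b}$. One then argues the kernel is no larger than two-dimensional: the third direction, orthogonal to both ${\bm a}$ and ${\bm b}$ (which is ${\bm a}\times{\bm b}$, proportional to $((\tfrac{1-M^2}{2})^{1/2},0,(\tfrac{1-M^2}{2})^{1/2})^\top$-type up to normalization), yields a strictly nonzero image because the corresponding eigenvalue of the reduced matrix is $2|\beta_s|\,|\phi_g|^2>0$ (here $\beta_s<0$ is essential). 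For $\cH_+=\cA+\cB$ the outer-product terms add rather than cancel, doubling the $\Phi_g\Phi_g^\top$ contribution; here I would check directly that $\cH_+({\bm b}\,\phi_g)=0$ — the doubled terms annihilate ${\bm b}$ because ${\bm b}\perp{\bm a}$ makes $\Phi_g\Phi_g^\top({\bm b}\phi_g)=({\bm a}^\top{\bm b})|\phi_g|^2{\bm a}\phi_g=0$ and similarly for the spin sum, so $\cH_+({\bm b}\phi_g)=\cH_-({\bm b}\phi_g)=0$ — while $\cH_+(\Phi_g)=\cH_+({\bm a}\phi_g)=2(\beta_n+\beta_s)|\phi_g|^2\Phi_g\neq0$ (again using \eqref{NLEP} to cancel $L\phi_g$, and $\beta_n+\beta_s>0$ from the hypothesis, or $\beta_n+\beta_s\ge0$ with the argument that equality forces a free-particle contradiction), so $\Phi_g\notin\ns(\cH_+)$, and the third direction again gives a nonzero positive eigenvalue.

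Finally I would address the one genuinely non-elementary point: the arguments above only show $\ns(\cH_\pm)$ contains the claimed spans when one restricts attention to functions of the separable form ${\bm c}(\bx)\phi_g(\bx)$, or more precisely one must rule out kernel elements not proportional to $\phi_g$ in each scalar component. The clean way is to note that $\cH_-$ on the single-mode-aligned subspace decouples but in general $\cH_\pm$ couple the three components through the rank-one operators; I would diagonalize in the fixed basis $\{{\bm a},{\bm b},{\bm a}\times{\bm b}\}$ of $\mathbb R^3$, writing any $\bbg=g_a(\bx){\bm a}+g_b(\bx){\bm b}+g_c(\bx)({\bm a}\times{\bm b})$, and show $\cH_\pm$ is block-diagonal in this decomposition with scalar blocks $\mathcal L_a:=L-\mu+(\beta_n+\beta_s)|\phi_g|^2$ (times $0$, $2$ depending on $\pm$, as above), $\mathcal L_b:=L-\mu+(\beta_n+\beta_s)|\phi_g|^2$ acting with coefficient making $\phi_g$ a zero mode, and $\mathcal L_c:=L-\mu+\beta_n|\phi_g|^2 + 2|\beta_s||\phi_g|^2$ which is a positive operator (its ground state $\phi_g$ has been shifted up by $2|\beta_s||\phi_g|^2>0$, so $\mathcal L_c>0$ and its kernel is trivial). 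The scalar operator $L-\mu+(\beta_n+\beta_s)|\phi_g|^2$ is exactly the linearized SMA operator whose kernel, by the standard fact that $\phi_g$ is the unique positive (hence nondegenerate) ground state minimizer in Lemma~\ref{lemma:ferr}, is precisely $\spa\{\phi_g\}$. Assembling the blocks gives $\ns(\cH_-)=\spa\{{\bm a}\phi_g,{\bm b}\phi_g\}=\spa\{\Phi_1,\Phi_2\}$ and $\ns(\cH_+)=\spa\{{\bm b}\phi_g\}=\spa\{\Phi_2\}$. I expect the main obstacle to be this last step — verifying the block-diagonalization in the $\{{\bm a},{\bm b},{\bm a}\times{\bm b}\}$ basis cleanly (i.e. that all the $\bss_j\Phi_g\Phi_g^\top\bss_j^\top$ and $\bss\cdot\bbs(\Phi_g)$ terms respect this splitting) and correctly identifying the scalar potential shift in the ${\bm a}\times{\bm b}$ block as strictly positive, since that is what forces the kernels to be no larger than claimed and is the only place the sign conditions $\beta_s<0$, $\beta_n+\beta_s\ge0$ are really used.
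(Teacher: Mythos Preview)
Your overall strategy---diagonalize $\cH_\pm$ in the orthonormal basis $\{{\bm a},{\bm b},{\bm c}\}$ of $\mathbb{R}^3$, reduce to three decoupled scalar Schr\"odinger operators, and invoke the nondegeneracy of the SMA ground state $\phi_g$ to identify their kernels---is exactly the paper's approach. The paper writes $\mathcal{W}=[{\bm a},{\bm b},{\bm c}]$ (with ${\bm c}=(\tfrac{1-M}{2},-(\tfrac{1-M^2}{2})^{1/2},\tfrac{1+M}{2})^\top=-\,{\bm a}\times{\bm b}$), verifies that $\mathcal{W}^{-1}\cH_-\mathcal{W}=\diag\big(L-\mu+(\beta_n+\beta_s)\phi_g^2,\ L-\mu+(\beta_n+\beta_s)\phi_g^2,\ L-\mu+(\beta_n-\beta_s)\phi_g^2\big)$, and then cites \cite{Maday-JSC} for the facts that the first operator has kernel $\spa\{\phi_g\}$ and the third is positive definite.

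There is, however, a concrete error in your second paragraph: the spin outer-product sums do \emph{not} fully cancel in $\cH_-=\cA-\cB$. The $j=x,z$ terms cancel because $\bss_x,\bss_z$ are real symmetric, but $\bss_y$ is purely imaginary with $\bss_y^\sH=\bss_y$ and $\bss_y^\top=-\bss_y$, so the $j=y$ contribution to $\cH_-$ is
\[
\beta_s\phi_g^2\big(\bss_y{\bm a}{\bm a}^\top\bss_y^\sH-\bss_y{\bm a}{\bm a}^\top\bss_y^\top\big)
=2\beta_s\phi_g^2\,\bss_y{\bm a}{\bm a}^\top\bss_y
=\beta_s\phi_g^2\,{\bm b}{\bm b}^\top,
\]
using $\bss_y{\bm a}=-\tfrac{i}{\sqrt 2}\,{\bm b}$. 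This residual is precisely what shifts the ${\bm b}$-block operator from $L-\mu+\beta_n\phi_g^2$---which is strictly positive, exceeding the SMA linearization by $|\beta_s|\phi_g^2$---to the correct $L-\mu+(\beta_n+\beta_s)\phi_g^2$, whose kernel is $\spa\{\phi_g\}$. If you followed your cancellation claim literally you would conclude $\Phi_2\notin\ns(\cH_-)$, contradicting the theorem. Your final paragraph asserts the correct $\mathcal{L}_b$, but that is inconsistent with the earlier computation; once you retain the $\bss_y$ term the two agree. (Two smaller slips: ${\bm a}\times{\bm b}$ is not of the form you wrote, and the ${\bm c}$-block is $L-\mu+(\beta_n-\beta_s)\phi_g^2$, i.e.\ a shift of $|\beta_s|$, not $2|\beta_s|$, above the SMA operator.)
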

Moreover, $\mathcal H_-$ and $\mathcal H_+$ are positive semidefinite.
\begin{proof}

It is straightforward to check that
	\begin{equation*}
		\mathcal{H}_-  = \mathcal{A}-\mathcal{B}
		=
		\Big(L+(\beta_n+\beta_s)\phi_g^2 - \mu \Big) \mathbf{I}_3
		+ \beta_s\phi_g^2\Big(
             \bss\cdot \bbs({\bm a})
			+ \bss_y{\bm a}{\bm a}^{\top}\bss_y^{\sH}
		- \bss_y{\bm a}{\bm a}^{\top}\bss_y^{\top}
		-\mathbf{I}_3
	\Big),
	\end{equation*}
     where $\mu = \mu_1 = \mu_0 = \mu_{-1}$ in Lemma \ref{lemma:ferr}.
Moreover, we have
	\begin{equation*}
\mathcal{W}^{-1}
\Big(
\bss\cdot \bbs({\bm a})
+ \bss_y{\bm a}{\bm a}^{\top}\bss_y^{\tt H}
- \bss_y{\bm a}{\bm a}^{\top}\bss_y^{\top}
-\mathbf{I}_3
\Big)\mathcal{W}
=\begin{bmatrix}0&0&0\\0&0&0\\0&0&-2\end{bmatrix},
\end{equation*}
where $\mathcal{W}=[{\bm a},{\bm b},{\bm c}]$ is an orthogonal symmetric matrix with the third column ${\bm c} = \big(\frac{1-M}{2},-(\frac{1-M^2}{2})^{1/2},\frac{1+M}{2}\big)^{\top}$.
Then, we can diagonalize operator $\mathcal H_{-}$ using $\mathcal W$ as
$\mathcal W^{-1}\mathcal H_- \mathcal W = \mathcal K$, where $\mathcal K$ reads explicitly as follows
\beas
\mathcal K = \diag\Big(L + (\beta_n + \beta_s)\phi_g^2 - \mu, L + (\beta_n+\beta_s)\phi_g^2- \mu , L + (\beta_n - \beta_s)\phi_g^2- \mu\Big).
\eeas
According to Lemmas 1 \& 2 in \cite{Maday-JSC},
$\ns\big(L + (\beta_n + \beta_s)\phi_g^2 - \mu\big) = \spa\{\phi_g\}$, and $L + (\beta_n - \beta_s)\phi_g^2- \mu$ is symmetric positive definite.
Hence, we can derive nullspace of $\mathcal K$ as
    \begin{equation*}
      \ns(\mathcal K) = \spa\left\{
        \begin{bmatrix} 1 \\ 0 \\ 0 \end{bmatrix}\phi_g,\
        \begin{bmatrix} 0 \\  1 \\ 0 \end{bmatrix}\phi_g
      \right\}.
    \end{equation*}
Therefore, we obtain
    \begin{equation*}
		\ns(\cH_-)=\spa \{ \Phi_1,\Phi_2\}=\spa \{ {\bm a}\,\phi_g,{\bm b}\,\phi_g\} .
    \end{equation*}
Similarly, we have $$\ns(\cH_+)=\spa \{\Phi_2\}.$$ Proofs are similar and we choose to omit details for brevity.

\end{proof}

\begin{thm}[\textbf{Generalized nullspace of $\cH$ for ferromagnetic system}] \label{thm:ferromagnetic}
Under conditions in Lemma \ref{lemma:ferr},
we have the nullspace of $\mathcal H$ as
\begin{equation}\label{equ:ferr_H}
\ns(\cH) = \spa \left\{\begin{bmatrix}
\mathbf 0 \\
\Phi_1
\end{bmatrix},
\begin{bmatrix}
\mathbf 0 \\
\Phi_2
\end{bmatrix},
\begin{bmatrix}
\Phi_2 \\
\mathbf 0
\end{bmatrix}
\right\},
\end{equation}
and the generalized nullspace
\begin{equation}
\label{equ:ferr_H2}
\ns(\cH^3) = \ns(\cH^2) = \ns(\cH) \oplus \spa \left\{\begin{bmatrix}
\widehat\Phi_1\\
\mathbf 0
\end{bmatrix}
\right\},
\end{equation}
where $\widehat{\Phi}_1$, satisfying $\cH_+\widehat{\Phi}_1=\Phi_1$, is independent of $\Phi_2$.
\end{thm}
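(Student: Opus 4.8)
The plan is to reduce everything to the two Hermitian positive semidefinite operators $\mathcal H_+$ and $\mathcal H_-$, whose nullspaces are supplied by Theorem \ref{SMA}, exploiting the block identities $\mathcal H^2=\diag\lf\mathcal H_-\mathcal H_+,\ \mathcal H_+\mathcal H_-\rg$ and $\mathcal H^3=\diag\lf\mathcal H_-\mathcal H_+\mathcal H_-,\ \mathcal H_+\mathcal H_-\mathcal H_+\rg$, together with two scalar facts I would record at the outset. A direct computation of the dot products gives ${\bm a}\cdot{\bm b}=0$ and $|{\bm b}|^2=1$, hence $\lag\Phi_1,\Phi_2\rag=0$ and $\|\Phi_2\|^2=\|\phi_g\|^2\neq0$. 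I would also observe, from the diagonalization $\mathcal W^{-1}\mathcal H_-\mathcal W=\mathcal K$ used in the proof of Theorem \ref{SMA} (and its analogue for $\mathcal H_+$), that each $\mathcal H_\pm$ is $L$ plus a bounded multiplication operator, so it has discrete spectrum and closed range; therefore $\mathrm{ran}(\mathcal H_\pm)=\ns(\mathcal H_\pm)^{\perp}$, and positive semidefiniteness yields the implication $\lag w,\mathcal H_\pm w\rag=0\Rightarrow\mathcal H_\pm w=0$. With this in hand, \eqref{equ:ferr_H} is immediate: $\mathcal H[\bbf;\bbg]^{\top}=\mathbf 0$ means $\mathcal H_+\bbf=\mathbf 0$ and $\mathcal H_-\bbg=\mathbf 0$, i.e. $\bbf\in\spa\{\Phi_2\}$ and $\bbg\in\spa\{\Phi_1,\Phi_2\}$.

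Next I would analyze the two diagonal blocks of $\mathcal H^2$ separately. In the lower block, $\mathcal H_+\mathcal H_-\bbg=\mathbf 0$ forces $\mathcal H_-\bbg\in\ns(\mathcal H_+)=\spa\{\Phi_2\}$; but $\Phi_2\in\ns(\mathcal H_-)$ with $\|\Phi_2\|\neq0$ puts $\Phi_2\notin\mathrm{ran}(\mathcal H_-)$, so in fact $\mathcal H_-\bbg=\mathbf 0$, i.e. the $\bbg$-component does not grow. In the upper block, $\mathcal H_-\mathcal H_+\bbf=\mathbf 0$ forces $\mathcal H_+\bbf\in\ns(\mathcal H_-)\cap\mathrm{ran}(\mathcal H_+)=\spa\{\Phi_1,\Phi_2\}\cap\{\Phi_2\}^{\perp}=\spa\{\Phi_1\}$, using $\Phi_1\perp\Phi_2$ and $\|\Phi_2\|\neq0$; since $\Phi_1\in\{\Phi_2\}^{\perp}=\mathrm{ran}(\mathcal H_+)$, there is a solution $\widehat\Phi_1$ of $\cH_+\widehat\Phi_1=\Phi_1$, unique modulo $\ns(\mathcal H_+)$, which I would fix inside $\{\Phi_2\}^{\perp}$, and then $\bbf\in\spa\{\widehat\Phi_1,\Phi_2\}$. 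This gives $\ns(\mathcal H^2)=\ns(\mathcal H)\oplus\spa\{[\widehat\Phi_1;\mathbf 0]\}$, and independence of $\widehat\Phi_1$ from $\Phi_2$ follows at once because $\mathcal H_+\widehat\Phi_1=\Phi_1\neq\mathbf 0=\mathcal H_+\Phi_2$.

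For the stabilization $\ns(\mathcal H^3)=\ns(\mathcal H^2)$ I would run the same peeling one step further on the diagonal blocks of $\mathcal H^3$. For the $\bbg$-block, $\mathcal H_+\mathcal H_-\bbg=c_1\Phi_1$ exactly as above, so $\mathcal H_-\bbg-c_1\widehat\Phi_1\in\ns(\mathcal H_+)$, i.e. $\mathcal H_-\bbg=c_1\widehat\Phi_1+c_3\Phi_2$; projecting onto $\mathrm{ran}(\mathcal H_-)=\ns(\mathcal H_-)^{\perp}$ and using $\Phi_1\perp\Phi_2$ forces $c_1\lag\widehat\Phi_1,\Phi_1\rag=0$, and since $\lag\widehat\Phi_1,\Phi_1\rag=\lag\widehat\Phi_1,\mathcal H_+\widehat\Phi_1\rag>0$ (positive semidefiniteness together with $\mathcal H_+\widehat\Phi_1=\Phi_1\neq\mathbf 0$) we get $c_1=0$, whence $c_3\|\Phi_2\|^2=0$ gives $c_3=0$ and $\mathcal H_-\bbg=\mathbf 0$. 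For the $\bbf$-block, setting $\bh=\mathcal H_+\bbf$ turns $\mathcal H_+\mathcal H_-\mathcal H_+\bbf=\mathbf 0$ into $\mathcal H_+\mathcal H_-\bh=\mathbf 0$, which is the lower-block situation just treated, so $\mathcal H_-\bh=\mathbf 0$, i.e. $\mathcal H_+\bbf\in\ns(\mathcal H_-)$, which is precisely the $\ns(\mathcal H^2)$ condition and yields $\bbf\in\spa\{\widehat\Phi_1,\Phi_2\}$. No new directions arise, so $p=2$ and \eqref{equ:ferr_H2} holds.

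The only genuinely delicate point I anticipate is the functional-analytic claim that each $\mathcal H_\pm$ has closed range equal to $\ns(\mathcal H_\pm)^{\perp}$ (a Fredholm alternative): this is automatic on the finite-dimensional Fourier-discretized level but at the operator level it relies on the compact resolvent of the harmonic-oscillator operator $L$ (hence of $L$ plus a bounded perturbation). Once that is granted, the entire argument is bookkeeping driven by the two scalar identities $\lag\Phi_1,\Phi_2\rag=0$ and $\|\Phi_2\|^2\neq0$.
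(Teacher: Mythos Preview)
Your argument is correct and follows essentially the same route as the paper's: both reduce to the nullspaces of $\cH_\pm$ from Theorem~\ref{SMA}, use the Fredholm alternative $\mathrm{ran}(\cH_\pm)=\ns(\cH_\pm)^\perp$ together with $\Phi_1\perp\Phi_2$, and terminate the chain at level three via the positivity $\lag\widehat\Phi_1,\Phi_1\rag=\lag\widehat\Phi_1,\cH_+\widehat\Phi_1\rag>0$; the paper phrases the step as ``$\cH[\bbf;\bbg]\in\ns(\cH^{k-1})$'' while you expand the block products of $\cH^k$, which is cosmetic. One notational slip to fix: $\cH^3$ is block \emph{anti}-diagonal, not $\diag(\cH_-\cH_+\cH_-,\cH_+\cH_-\cH_+)$ as you wrote, though the two scalar equations you actually analyze, $\cH_-\cH_+\cH_-\bbg=\mathbf 0$ and $\cH_+\cH_-\cH_+\bbf=\mathbf 0$, are the right ones.
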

\begin{proof}
For any $(\bbf^{\top}, \bbg^{\top})^{\top}\in\ns(\cH)$, we have $\bbf\in\ns(\cH_+)$ and $\bbg\in\ns(\cH_-)$.
According to Theorem \ref{SMA}, we can prove Eqn. \eqref{equ:ferr_H}.
Next, we shall investigate the nullspace of $\cH^2$.
For any $(\bbf^{\top}, \bbg^{\top})^{\top}\in \ns(\cH^2)$, we have
\begin{equation}\label{H2Null}
	\begin{bmatrix}
		\mathcal O &\mathcal H_- \\
		\mathcal H_+&\mathcal O
	\end{bmatrix}
	\begin{bmatrix}
		\bbf  \\ \bbg
	\end{bmatrix} \in \ns(\cH).
\end{equation}
It is worthy to note that $\Phi_2$ does not belong to range of $\cH_+$, i.e., $\Phi_2\notin \mathcal R(\cH_+)$.
Otherwise, there exists a solution to $\cH_+\bbf=\Phi_2$, then we derive a contradiction
\begin{equation*}
0<\lag\Phi_2,\Phi_2\rag
=\lag\Phi_2,\cH_+\bbf\rag
=\lag\cH_+\Phi_2,\bbf\rag = 0.
\end{equation*}
Similarly, we can prove that $\Phi_2 \notin \mathcal R(\cH_{-})$.
Therefore, equation \eqref{H2Null} is equivalent to
	\begin{equation*}
\mathcal H_- \bbg = \mathbf{0},\quad
\mathcal H_+ \bbf = \Phi_1.
	\end{equation*}
Since $\cH_+$ is a self-adjoint compact operator,
using the spectral theory of operator, 
the fact that
$\Phi_1\not\in\ns(\cH_+)$ leads to
$\Phi_1\in \mathcal R(\cH_+)$.
That is, there exists a function $\widehat{\Phi}_1$ such that $\cH_+\widehat{\Phi}_1=\Phi_1$.
Then, we have
	\begin{equation*}
		\ns(\cH^2) = \ns(\cH) \oplus \spa \left\{\begin{bmatrix}
				\widehat\Phi_1\\
				\mathbf 0
			\end{bmatrix}
		\right\}.
	\end{equation*}

Finally, we study the nullspace of $\cH^3$.
Similarly, if $\ns(\cH^2) \subsetneq \ns(\cH^3)$, we have the following equivalence
	\begin{equation*}
		\begin{bmatrix}
			\mathcal O &\mathcal H_- \\
			\mathcal H_+&\mathcal O
		\end{bmatrix}
		\begin{bmatrix}
			\bbf  \\ \bbg
		\end{bmatrix}=
		\begin{bmatrix}
			\widehat\Phi_1\\  \mathbf{0}
		\end{bmatrix}\Longleftrightarrow
		\begin{cases}
\mathcal H_- \bbg = \widehat\Phi_1\\
\mathcal H_+ \bbf = \mathbf{0}
		\end{cases}.
	\end{equation*}
However, the above equation does not admit any solutions, and it can be proved using the following argument.
If there exists a solution $\bbg$ satisfying $\mathcal H_- \bbg = \widehat\Phi_1$, then we have the following contradiction
\begin{equation*}
0=\langle \mathcal H_-\Phi_1, \bbg \rangle
=\langle \Phi_1,\mathcal H_- \bbg \rangle
=\langle \Phi_1,\widehat\Phi_1 \rangle
=\langle \mathcal H_+\widehat\Phi_1,\widehat\Phi_1 \rangle>0.
\end{equation*}
That is to say, $\ns(\cH^3) = \ns(\cH^2)$, and proof is completed.
\end{proof}

\vspace{0.25cm}

\noindent {\bf Case II: Anti-ferromagnetic system ($\beta_s>0$)}.

\vspace{0.1cm}

\begin{lem}[{\cite[Theorem4.3]{Bao2017Mathematical}}]\label{lem2Comp}
If $M\in (-1,1)$, $\beta_s>0$ and $\beta_n\geq 0$,
there exists a ground state solution $\Phi_g = (\phi_1^g,0,\phi^g_{-1})^\top$
and $(\phi^g_1,\phi^g_{-1})$ is a minimizer of the energy functional
$$E_{0}\big(\phi_1,\phi_{-1}\big)
= \int_{\mathbb{R}^d}\left[\sum_{j=\pm 1}\bigg(\frac{1}{2}|\nabla\phi_j|^2 + V(\bx)|\phi_j|^2\bigg) + \frac{\beta_n+\beta_s}{2}(|\phi_1|^4+|\phi_{-1}|^4) + (\beta_n-\beta_s)|\phi_1|^2|\phi_{-1}|^2\right]\dif\bx$$
under constraint
$\mathcal S :=\left\{(\phi_1,\ \phi_{-1}) \big|~
\|\phi_1\|^2=\frac{1+M}{2},\ \|\phi_{-1}\|^2=\frac{1-M}{2},\
E_{0}(\phi_1,\phi_{-1}) < \infty \right\}.$
\end{lem}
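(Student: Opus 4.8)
The statement is quoted verbatim from \cite[Theorem~4.3]{Bao2017Mathematical}, so a complete proof is available there; here I outline the argument, whose only genuinely analytical ingredient is a symmetrization that collapses the three--component minimization onto a two--component one. The plan is: when $\beta_s>0$, for every feasible $\Phi=(\phi_1,\phi_0,\phi_{-1})^\top\in\mathcal{S}_M$ I would build a competitor $\widetilde\Phi=(\widetilde\phi_1,0,\widetilde\phi_{-1})^\top$ with the \emph{same} total density and magnetization density and with $\mathcal{E}(\widetilde\Phi)\le\mathcal{E}(\Phi)$; consequently $\inf_{\mathcal{S}_M}\mathcal{E}$ equals the infimum of $\mathcal{E}$ over spinors with vanishing middle component, which is exactly $\inf_{\mathcal{S}}E_0$, and a minimizer of $E_0$ over $\mathcal{S}$ then furnishes a ground state of the required form.

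First I would rewrite the spin energy density explicitly. A direct computation with the Pauli matrices $\bss_x,\bss_y,\bss_z$ gives $|\bbs(\Phi)|^2=(|\phi_1|^2-|\phi_{-1}|^2)^2+2\,|\overline{\phi_1}\phi_0+\overline{\phi_0}\phi_{-1}|^2=\rho_z^2+2\,|\overline{\phi_1}\phi_0+\overline{\phi_0}\phi_{-1}|^2$, so that
\[
\mathcal{E}(\Phi)=\int_{\mathbb{R}^d}\Big\{\sum_{j=-1}^{1}\big(\tfrac12|\nabla\phi_j|^2+V|\phi_j|^2\big)+\tfrac{\beta_n}{2}\rho^2+\tfrac{\beta_s}{2}\rho_z^2\Big\}\dif\bx+\beta_s\int_{\mathbb{R}^d}|\overline{\phi_1}\phi_0+\overline{\phi_0}\phi_{-1}|^2\dif\bx .
\]
For $\beta_s>0$ the last term is nonnegative, and apart from it the component $\phi_0$ enters $\mathcal{E}$ only through $\rho$, the potential term and the Dirichlet term.

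Next I would set $\widetilde\phi_{\pm1}:=\sqrt{\,|\phi_{\pm1}|^2+\tfrac12|\phi_0|^2\,}$ (real, nonnegative) and $\widetilde\phi_0:=0$. Then $|\widetilde\phi_1|^2+|\widetilde\phi_{-1}|^2=\rho$ and $|\widetilde\phi_1|^2-|\widetilde\phi_{-1}|^2=|\phi_1|^2-|\phi_{-1}|^2=\rho_z$ pointwise, hence $\widetilde\Phi$ has unit mass and magnetization $M$, and $(\widetilde\phi_1,\widetilde\phi_{-1})$ satisfies $\|\widetilde\phi_1\|^2=\tfrac{1+M}{2}$, $\|\widetilde\phi_{-1}\|^2=\tfrac{1-M}{2}$, i.e. lies in $\mathcal{S}$; moreover $\mathcal{E}(\widetilde\Phi)=E_0(\widetilde\phi_1,\widetilde\phi_{-1})$. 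By construction the potential, $\beta_n\rho^2$ and $\beta_s\rho_z^2$ contributions are unchanged and the nonnegative term $\beta_s\int|\overline{\phi_1}\phi_0+\overline{\phi_0}\phi_{-1}|^2$ is discarded, so it remains to control the kinetic energy, which is the crux: I would prove $|\nabla\widetilde\phi_1|^2+|\nabla\widetilde\phi_{-1}|^2\le\sum_{j}|\nabla\phi_j|^2$ a.e. from the pointwise inequality $\big|\nabla\sqrt{|a|^2+|b|^2}\big|^2\le|\nabla a|^2+|\nabla b|^2$ (which comes from $\sqrt{|a|^2+|b|^2}\,\nabla\sqrt{|a|^2+|b|^2}=\mathrm{Re}(\overline a\nabla a+\overline b\nabla b)$ and Cauchy--Schwarz, with the standard regularization $\sqrt{|a|^2+|b|^2+\delta^2}$, $\delta\downarrow0$, to handle $\{a=b=0\}$), applied to $(a,b)=(\phi_1,\phi_0/\sqrt2)$ and to $(a,b)=(\phi_{-1},\phi_0/\sqrt2)$ and summed. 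This yields $\mathcal{E}(\widetilde\Phi)\le\mathcal{E}(\Phi)$.

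Combining the two reductions gives $\inf_{\mathcal{S}_M}\mathcal{E}=\inf_{\mathcal{S}}E_0$, and any minimizer $(\phi_1^g,\phi_{-1}^g)$ of $E_0$ over $\mathcal{S}$ produces a ground state $\Phi_g=(\phi_1^g,0,\phi_{-1}^g)^\top$. Existence of such a minimizer is the part I would simply cite from \cite{Bao2017Mathematical}: under $\beta_n\ge0$ and $\beta_s>0$ the quartic part of $E_0$ equals $\tfrac{\beta_n}{2}\rho^2+\tfrac{\beta_s}{2}\rho_z^2\ge0$, so $E_0$ is bounded below and coercive on $\mathcal{S}$, and the confining potential $V$ supplies the compactness needed to pass to the limit along a minimizing sequence. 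I expect the main obstacle to be exactly the kinetic estimate above — showing the Dirichlet energy does not increase under the nonlinear map $\phi\mapsto\widetilde\phi$, together with the care required on the nodal set of $\widetilde\phi_{\pm1}$; the rest is algebra or a citation.
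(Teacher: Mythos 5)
The paper offers no proof of this lemma: it is quoted directly from \cite[Theorem 4.3]{Bao2017Mathematical}, so there is nothing internal to compare against. Your reconstruction is correct and is essentially the argument in that reference — the identity $|\bbs(\Phi)|^2=\rho_z^2+2|\overline{\phi_0}\phi_1+\overline{\phi_{-1}}\phi_0|^2$ checks out against the given Pauli matrices, the redistribution $|\widetilde\phi_{\pm1}|^2=|\phi_{\pm1}|^2+\tfrac12|\phi_0|^2$ rightly preserves $\rho$ and $\rho_z$ \emph{pointwise} (so that $\int\rho_z^2$ is unchanged, not just the magnetization), and the diamagnetic-type inequality handles the kinetic term; the remaining existence step for $E_0$ on $\mathcal S$ is appropriately cited.
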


\begin{thm}[\textbf{Nullspace of $\cH_-$ for anti-ferromagnetic system}]\label{TCC}
   Under conditions in Lemma \ref{lem2Comp}, we have the following property for nullspace of $\mathcal H_-$
$$\spa \{\Phi_1,\Phi_2\}\subset \ns(\cH_-),$$
where $\Phi_1 = \Phi_g$, $\Phi_2 = (b_{1}\phi_1^g,0,b_{-1}\phi_{-1}^g)^\top$, and
 $(b_{1},b_{-1}) = \big(-(\frac{1-M}{1+M})^{1/2}, (\frac{1+M}{1-M})^{1/2}\big)$.
\end{thm}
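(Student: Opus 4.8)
The plan is to compute $\cH_-=\cA-\cB$ explicitly at the real ground state $\Phi_g=(\phi_1^g,0,\phi_{-1}^g)^\top$ furnished by Lemma \ref{lem2Comp}, to observe that the vanishing of the middle component forces $\cH_-$ to be a \emph{diagonal} matrix operator, and then to note that both $\Phi_1$ and $\Phi_2$ have the shape $(c_1\phi_1^g,0,c_{-1}\phi_{-1}^g)^\top$ and are therefore killed componentwise, since the relevant diagonal entries annihilate $\phi_1^g$ and $\phi_{-1}^g$ by virtue of the stationary equation \eqref{NLEP}.

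First I would reduce $\cH_-$. For a real $\Phi_g$ one has $\Phi_g^\sH=\Phi_g^\top$, so the $\beta_n$ rank-one terms of $\cA$ and $\cB$ cancel in the difference; among the spin dyadics, the $j=x,z$ contributions cancel because $\bss_x^\sH=\bss_x^\top$ and $\bss_z^\sH=\bss_z^\top$, while the $j=y$ contribution doubles because $\bss_y^\sH=\bss_y=-\bss_y^\top$. This yields
\[
\cH_- = (L\mathbf{I}_3-\Lambda)+\beta_n\rho\,\mathbf{I}_3+\beta_s\bigl(\bss\cdot\bbs(\Phi_g)+2\,\bss_y\Phi_g\Phi_g^\top\bss_y\bigr).
\]
Inserting $\Phi_g=(\phi_1^g,0,\phi_{-1}^g)^\top$, a short computation gives $\bbs_x(\Phi_g)=\bbs_y(\Phi_g)=0$ and $\bbs_z(\Phi_g)=\rho_z=|\phi_1^g|^2-|\phi_{-1}^g|^2$, so $\bss\cdot\bbs(\Phi_g)=\rho_z\bss_z=\mathrm{diag}(\rho_z,0,-\rho_z)$, and likewise $2\,\bss_y\Phi_g\Phi_g^\top\bss_y=\mathrm{diag}\bigl(0,(\phi_1^g-\phi_{-1}^g)^2,0\bigr)$. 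Hence $\cH_-=\mathrm{diag}\bigl(\cH_-^{(1)},\cH_-^{(0)},\cH_-^{(-1)}\bigr)$ with
\[
\cH_-^{(\pm1)}=L+(\beta_n+\beta_s)|\phi_{\pm1}^g|^2+(\beta_n-\beta_s)|\phi_{\mp1}^g|^2-\mu_{\pm1}.
\]

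The crucial observation is that $\cH_-^{(1)}\phi_1^g=0$ and $\cH_-^{(-1)}\phi_{-1}^g=0$: these are precisely the first and third components of the stationary equation \eqref{NLEP} evaluated at $\Phi_g$, after using $\phi_0^g=0$ to drop the $\psi_0$-coupled nonlinearities (equivalently, the Euler--Lagrange equations of the reduced functional $E_0$ of Lemma \ref{lem2Comp}, with multipliers $\mu_{\pm1}$). Since $\Phi_1=\Phi_g$ and $\Phi_2=(b_1\phi_1^g,0,b_{-1}\phi_{-1}^g)^\top$ are both of the form $(c_1\phi_1^g,0,c_{-1}\phi_{-1}^g)^\top$, applying the diagonal operator componentwise gives $\cH_-\Phi_1=\cH_-\Phi_2=\mathbf 0$ for \emph{arbitrary} constants; the particular values $(b_1,b_{-1})$ are irrelevant to membership in $\ns(\cH_-)$ and serve only to enforce $\langle\Phi_1,\Phi_2\rangle=b_1\tfrac{1+M}{2}+b_{-1}\tfrac{1-M}{2}=0$, so that $\Phi_1,\Phi_2$ are linearly independent. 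This establishes $\spa\{\Phi_1,\Phi_2\}\subset\ns(\cH_-)$.

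There is no real obstacle here; the only care needed is in tracking which Pauli dyadics survive the subtraction $\cA-\cB$ and in matching the diagonal blocks of $\cH_-$ with the components of \eqref{NLEP}. I would verify only the asserted containment, in line with the statement of the theorem (which, unlike the ferromagnetic Theorem \ref{SMA}, does not claim equality).
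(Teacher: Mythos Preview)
Your proposal is correct and follows essentially the same approach as the paper: both reduce $\cH_-$ to a diagonal matrix operator using $\phi_0^g=0$, identify the $(\pm1)$-blocks with the reduced stationary equations, and conclude $\cH_-\Phi_1=\cH_-\Phi_2=\mathbf 0$. You additionally spell out the cancellation mechanism in $\cA-\cB$ (real Pauli dyadics cancel, the $\bss_y$ term doubles) and remark that the particular $(b_1,b_{-1})$ only enforce $\langle\Phi_1,\Phi_2\rangle=0$, which the paper also checks.
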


\begin{proof}
Noticing the fact that $\phi_0 = 0$, we can reduce Eqn. \eqref{NLEP} as follows
\begin{equation}\label{equ:anti_ferr_gs}
\left\{\begin{array}{ll}
   [ L -\ \mu_1 \ + \beta_n\rho + \beta_s(|\phi^g_1|^{2}-|\phi^g_{-1}|^{2})]~ \phi^g_1& = 0,\\[0.5em]
   [L -\mu_{-1} +  \beta_n\rho - \beta_s(|\phi^g_1|^{2}-|\phi^g_{-1}|^{2})]~\phi^g_{-1}&= 0,
\end{array}
\right.
\end{equation}
and the operator $\mathcal H_{-}$ is simplified below
\beas
&&\mathcal H_- = \diag\Big( L -\mu_1+ \beta_n\rho + \beta_s\big(|\phi^g_1|^{2}-|\phi^g_{-1}|^{2}\big),\ \  L - \mu_{0} + \beta_n\rho + \beta_s\big(\phi^g_{1}-\phi^g_{-1}\big)^2,\\
&&\qquad\qquad\quad\ L -\mu_{-1} +  \beta_n\rho - \beta_s\big(|\phi^g_1|^{2}-|\phi^g_{-1}|^{2}\big) \Big).
\eeas

Then, according to Eqn. \eqref{equ:anti_ferr_gs},
we obtain that $\Phi_1$ and $\Phi_2$ lie in the nullspace of $\cH_-$, i.e., $\cH_-\Phi_1 = 0$ and $\cH_-\Phi_2 = 0$.
It is easy to check that
$\lag\Phi_1,\Phi_2\rag = 0$ and $\|\Phi_2\| = 1$ with $b_{1}= -(\frac{1-M}{1+M})^{1/2},~b_{-1} = (\frac{1+M}{1-M})^{1/2}$.
\end{proof}

\begin{remark}\label{rem:anti-ferr}
From our extensive numerical results not fully shown here,
we conjecture that the following property holds for anti-ferromagnetic systems, that is,
\begin{center}
   $\cH_+$ is invertible ~ ~ ~ and ~ ~ ~ $\ns(\cH_-) = \spa \{\Phi_1,\Phi_2\}$.
\end{center}
Moreover, $\mathcal H_+$ is positive definite. and $\mathcal H_-$ is positive semidefinite.

If the above property is true,
we can obtain the generalized nullspace of $\mathcal H$
as follows
\bea
\ns(\cH^3) = \ns(\cH^2) = \ns(\cH) \oplus \spa \left\{\begin{bmatrix}
\widehat\Phi_1\\
\mathbf 0
\end{bmatrix},
\begin{bmatrix}
\widehat\Phi_2\\
\mathbf 0 \\
\end{bmatrix}\right\},
\eea
where $\widehat{\Phi}_j = \cH_+^{-1}\Phi_j, ~j=1,2$ and
\begin{equation*}
\ns(\cH) = \spa \left\{\begin{bmatrix}
\mathbf 0 \\
\Phi_1
\end{bmatrix},
\begin{bmatrix}
\mathbf 0 \\
\Phi_2
\end{bmatrix}\right\}.
\end{equation*}

\end{remark}


The convergence of the non-zero eigenvalues depends heavily on the approximation accuracy of generalized nullspace that is associated with $\cH$ \cite{LiWangZhang}.
Based on Theorem \ref{thm:ferromagnetic} and Remark \ref{rem:anti-ferr},  we obtain the generalized nullspace of $\cH$, thus essentially improving the convergence and efficiency.

\section{Numerical method}
\label{Numer_alg}

In this section, we will propose an efficient and spectral accurate numerical method to solve the BdG equations \eqref{HpHm-eq}.
Due to the presence of external trapping potential $V(\bx)$, the ground states $\Phi_g(\bx)$
and the eigenfunctions $(\bu,\bv)$/$(\bbf,\bbg)$
are all smooth and fast decaying. Therefore, it is reasonable
to truncate the whole space $\mathbb{R}^d$
into a bounded domain $\Omega \subset \mathbb{R}^d$ that is large enough such that truncation error is negligible.
Since all related functions are smooth,  the Fourier pseudospectral (FS) discretization stands out as the optimal candidate for spatial discretization \cite{Bao2017Mathematical,Bao2007mass,Tang2022Spectrally},
due to its simplicity, spectral accuracy, and great efficiency that is guaranteed by discrete Fast Fourier transform (FFT).
The computation domain is usually chosen as a rectangle, denoted as ${\mathcal D}_L:=[-L,L]^{d}$.  Spatial discretization details will be presented in the next subsection.

\subsection{Spatial discretization by Fourier spectral method}\label{FS_d}
Provided that the stationary states $\Phi_g$ and all chemical potentials are precomputed with a very fine mesh by the PCG method \cite{ATZ-CiCP-PCG}, such that the numerical accuracy approaches machine precision.
For simplicity, we choose to illustrate the spatial discretization for the 1D case and extensions to higher dimensions are omitted here.
We choose the computational domain ${\mathcal D}_L:=[-L,L]$ and discretize it uniformly with mesh size $h_x=\fl{2L}{N}$ where $N$ is a positive even integer. Define the grid point set as $\mathcal{T}_{x}=\{(-N/2,\cdots,N/2)h_x\}$
and introduce the plane wave basis and discrete space as
\bes
W_{k}(x)=e^{\im \mu_k (x+L)}\quad (-N/2 \leq k \leq N/2-1), \mbox{ and } X_N = \spa\{W_{k}(x)\}_{k=-N/2}^{N/2-1},
\ees
with $\mu_k = \pi  k/L $.
Define $F(x_n)$ ($F = \phi^g_j, u_j, v_j, V$ etc.) as function value at grid point $x_n = -L+nh_x \in \mathcal{T}_{\bx}$ and  $\mathbf F = \big(F(x_1),F(x_2),\cdots,F(x_N)\big)$ as the corresponding discrete vector.
The Fourier pseudo-spectral approximations of $F$, denoted by ${F_N}$, and its Laplacian $\nabla^2 F$ read as follows
\bea\label{laplace_approx}
F(x) \approx (\mathcal{I}_N{F})(x):=\sum_{k=-N/2}^{N/2-1}\widetilde{{F}}_{k}\; W_{k}(x),
\quad (\nabla^2 F)(x) \approx (\nabla^2 (\mathcal{I}_N{F}))(x)\! =\sum_{k=-N/2}^{N/2-1}-\mu_k^2~\widetilde{{F}}_{k}\; W_{k}(x),
\eea
where $\mathcal I_N$ denotes a mapping from $C(\Omega)$ to $X_N$, and $\widetilde{{F}}_{k}$, the discrete Fourier transform of  vector $\mathbf F$, is computed as
\begin{equation}
\label{eq:FT_3}
 \widetilde{F}_{k}= \frac{1}{N} \sum_{n=0}^{N-1} F(x_n)\overline{W}_{k}(x_n)
 = \frac{1}{N} \sum_{n=0}^{N-1} F(x_n) ~e^{\frac{-i 2\pi n k}{N}}, \quad \quad    -N/2 \leq k \leq N/2-1,
  \end{equation} and is accelerated by discrete Fast Fourier transform (FFT) within  $O(N \ln(N))$ float operations. The numerical approximation of Laplacian operator $\nabla^2$, denoted as $[\![\nabla^2]\!]$, corresponds to a dense matrix \cite{SpectralBkShen,Tang2022Spectrally,Zhang2021}.
 We map the function's pointwise multiplication $F_{n} G_{n}$ as a matrix-vector product $[\![F]\!] {\mathbf G}$, that is,
  \bea
\left( [\![F]\!] {\mathbf G} \right)_{n} := F_{n}G_{n} \quad \Longrightarrow \quad [\![F]\!] = \diag(\mathbf F).
 \eea
Therefore, operators $\cA$ and $\cB$ are mapped into matrices $\mathbf A$ and $\mathbf B$ in the following way
\bea \label{def_bfA}
&&\mathbf A := [\![L\mathbf I_3-\Lambda]\!] + \beta_n\Big([\![\rho\mathbf{I}_3]\!] + [\![\Phi_g\Phi_g^\top]\!]\Big) + \beta_s\bigg([\![\bss\cdot\bbs(\Phi_g)]\!] + \sum_{j=x,y,z}[\![\bss_j\Phi_g\Phi_g^\top\bss_j]\!]\bigg), \\
\label{def_bfB}
&&\mathbf B:= \beta_n[\![\Phi_g\Phi_g^\top]\!] + \beta_s\sum_{j=x,y,z}[\![\bss_j\Phi_g\Phi_g^\top\bss_j^\top]\!].
\eea
Thanks to the Fourier spectral discretization, both matrices $\mathbf A$ and $\mathbf B$ are symmetric, thus keeping their continuous operators' Hermitian property.
Setting $\bm{u}_N = (\bm{u}_{1,N};\bm{u}_{0,N};\bm{u}_{-1,N})$ and $\bm{v}_N = (\bm{v}_{1,N};\bm{v}_{0,N};\bm{v}_{-1,N})$,
the BdG equations \eqref{BdG_new} are then discretized into a linear eigenvalue problem
\begin{gather}
\label{BdG-d}
\begin{bmatrix}
\mathbf{A}&\mathbf{B} \\
-\mathbf{B}&-\mathbf{A}
\end{bmatrix}
\begin{bmatrix}
\bm{u}_N \\
\bm{v}_N
\end{bmatrix}
=\omega_N\begin{bmatrix}
\bm{u}_N\\
\bm{v}_N
\end{bmatrix}.
\end{gather}
With a similar change of variables, i.e.,
\be\label{c-o-v-h}
\bm{f}_N = \frac{1}{2}(\bm{u}_N+\bm{v}_N),~\bm{g}_N = \frac{1}{2}(\bm{u}_N-\bm{v}_N),
\ee
the above equation is rewritten into a linear
response eigenvalue problem
\begin{gather}
\label{lrep-d}
\begin{bmatrix}
{\mathbf O}&\mathbf{H}_{-} \\
  \mathbf{H}_{+}&{\mathbf O}
\end{bmatrix}
\begin{bmatrix}
\bm{f}_N \\
\bm{g}_N
\end{bmatrix}
=\omega_N\begin{bmatrix}
\bm{f}_N\\
\bm{g}_N
\end{bmatrix},
\end{gather}
where $\mathbf{H}_{+} = \mathbf{A}+\mathbf{B}$, $\mathbf{H}_{-} = \mathbf{A}-\mathbf{B}$.

The above discrete dense eigensystem \eqref{lrep-d} is solved using the recently developed BiOrthogonal Structure Preserving algorithm (BOSP for short) \cite{LiWangZhang}, and details are illustrated in the coming subsection.

\begin{remark}[Matching eigenfunction constrain \eqref{constrainfg}]
In practice, we {\sl do not}  need to bother about the constrain \eqref{constrainfg} when solving the eigenvalue problem \eqref{lrep-d}.
Actually, once the eigenvectors are computed, the constrain \eqref{constrainfg} will be satisfied easily with a scalar scaling of $\bbf_N$ and $\bbg_N$.
\end{remark}

\subsection{Bi-orthogonal structure-preserving Fourier spectral eigensolver}\label{subsec:LREP-biorth}

The ARPACK package has been successfully adapted to BdG equations of dipolar BEC \cite{ARPACK,Tang2022Spectrally}.
However, a similar adaptation attempt is not as effective, and sometimes it does not even converge within a reasonable time,
because the generalized nullspace is much larger and more complicated, not to even mention the large-scale dense eigensystem for three-dimensional problems.
The eigenspaces associated with eigenvalues of different magnitudes are biorthogonal, and such biorthogonality property shall be taken into account in the eigensolver design \cite{Bai2012Minimization,Bai2013Minimization,LiWangZhang}.
The structure of generalized nullspace and biorthogonality, well described in Section \ref{sec:BdG_Prop} on the continuous level,
are assessed numerically by the linear algebra package (LAPACK) and a modified Gram-Schmidt biorthogonal algorithm.
Based on the recently developed linear response eigensolver BOSP \cite{LiWangZhang},
by combing the Fourier spectral method for spatial approximation and making use of the specific generalized nullspace structure (Section \ref{sec:rootspace}),
we propose an efficient iterative subspace eigensolver and provide a friendly interface for matrix-vector product evaluation.
Since there is no explicit matrix storage and the matrix-vector product is implemented via FFT within almost optimal $O({\tt DOF}\log({\tt DOF}))$ operations ({\tt DOF} is the degree of freedom/total number of grid points, i.e., ${\tt DOF} = N^d$),
the heavy memory burden is much more alleviated and the computational efficiency is guaranteed to a large extent.
Therefore, it provides a feasible solution to large-scale problems, especially in such a densely populated system in our case.
Thanks to the Fourier spectral method, our solver can achieve spectral accuracy for both eigenvalue and eigenvectors as long as the BOSP's accuracy tolerance is chosen sufficiently small.

As is known, the matrix-vector product evaluation is the most time-consuming part. While, in this article, the matrix-vector product
$$\mathbf{H}_+ \bm{f}_N,\quad \mathbf{H}_-\bm{g}_N,\quad \mbox{for} \quad \bm{f}_N, \bm{g}_N \in  {\mathbb R}^{3N},$$
can be realized by two pairs of FFT/iFFT plus some function multiplication in physical space.
The overall computational costs to compute the first {\tt nev} eigenpairs amounts to
\beas
\mathcal{O}({\tt nev})\times \big(\mathcal{O}({\tt DOF} \log({\tt DOF})) + \mathcal{O}({\tt DOF})\big) + \mathcal{O}({\tt nev}^3)
=  \mathcal{O}({\tt nev}~N \log({\tt DOF}))  + \mathcal{O}({\tt nev}^3).
\eeas
When the degree of freedom is much larger than the number of eigenvalues, i.e., $ {\tt DOF} \gg {\tt nev}$, the computational costs are approximately $\mathcal{O}({\tt nev}\times{\tt DOF} \log({\tt DOF}))$ and will be verified numerically in the next section.

\subsection{Stability analysis}
To demonstrate the practicality of our solver,  we will present a rigorous stability analysis for the eigenfunctions in this subsection.
Since the density and eigenfunction both decay fast enough in space,
it is reasonable to assume that $\chi(\bx)$ ($\chi = \phi^g_j, u_j, v_j$, etc.) is numerically compactly supported
in a bounded domain $\Omega\subsetneq\mathbb{R}^d$, that is,  $\mathrm{supp}\{\chi\} \subsetneq \Omega$. We introduce $2L$-periodic Sobolev space $H^m_p(\Omega)
\subset H^m(\Omega) \ \ (m\geq1)$ with $\Omega = \mathcal{D}_L$, and the semi-norm, norm and $\infty$-norm as follows, respectively,
$$|\chi|_{m} := \left(\sum\nolimits_{|\bm{\alpha}|=m} \|\partial^{\bm{\alpha}} \chi \|^2\right)^{1/2},
\quad  \|\chi\|_{m} := \left(\sum\nolimits_{ |\bm{\alpha}|\leq m} \|\partial^{\bm{\alpha}} \chi \|^2\right)^{1/2},\quad
\|\chi\|_{\infty}=
\sup_{\bx\in\Omega}|w(\bx)|,$$
with index $\bm\alpha=(\alpha_1,\ldots, \alpha_d)\in\mathbb{Z}^d$, $|\bm\alpha|=\sum_{j=1}^d\alpha_j$, $\partial^{\bm\alpha}=\partial_{x_1}^{\alpha_1}\cdots\partial_{x_d}^{\alpha_d}$ and $\|\cdot\|$ being the $L^2$ norm.
For vector-valued function $\bm \chi = (\chi_{1},\chi_0,\chi_{-1})^\top$, we define $|\bm \chi|_{m} := \sqrt{\sum_{j=-1}^1|\chi_j|_{m}^2}$,~ $\|\bm \chi\|_{m} := \sqrt{\sum_{j=-1}^1\|\chi_j\|_{m}^2}$,~$\|\bm \chi\| :=\sqrt{\sum_{j=-1}^1\|\chi
_j\|^2}$,~ and $\|\bm \chi\|_{\infty} := \max\{\|\chi_j\|_{\infty}\}$. We use $A\lesssim B$ to denote $A \leq cB$ where the
constant $c>0$ is independent of the grid number $N$. Then we introduce the stability analysis as follows:

\begin{thm}[\textbf{Stability Analysis}]\label{thm:stability}
 For any $\bm{\chi}(\bx)=(\chi_1(\bx),\chi_0(\bx),\chi_{-1}(\bx))^\top \in (H_p^{m}(\Omega))^3$  with $m> d/2 + 2$, and its Fourier pseudo-spectral approximations \\ $(\cI{\bm{\chi}})(\bx)=\big((\cI{\chi_1})(\bx),(\cI{\chi_0})(\bx),(\cI{\chi_{-1}})(\bx)\big)^\top$, we have the following error estimate
\bea
\|\mathcal{Q} \bm{\chi}- \mathcal{Q}(\cI{\bm{\chi}})\|_{\infty} & \lesssim & N^{-(m - \frac{d}{2} - 2)},
\eea
where $\mathcal{Q} = \mathcal{A},\mathcal{B}$, and $\bm{\chi}=\bm{u},\bm{v}$.
\end{thm}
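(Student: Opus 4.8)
The plan is to exploit linearity of $\cA,\cB$ together with the fact that, up to a single Laplacian term, these operators are nothing but multiplication by smooth bounded matrix‑valued functions. Since the ground state is real ($\cD=-\cA$, $\cC=-\cB$), formulas \eqref{def_A}--\eqref{def_B} give $\cA=-\tfrac{1}{2}\nabla^2\,\mathbf{I}_3+\mathcal{M}_{\cA}(\bx)$ and $\cB=\mathcal{M}_{\cB}(\bx)$, where $\mathcal{M}_{\cA},\mathcal{M}_{\cB}$ act by pointwise $3\times 3$‑matrix multiplication with entries that are polynomials in $V(\bx)$ and in the components of $\Phi_g(\bx)$ (together with the constant $\Lambda$). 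As $V$ is a polynomial and $\Phi_g$ is smooth and bounded on the bounded box $\Omega=\mathcal{D}_L$ (elliptic regularity and exponential decay for \eqref{NLEP}), there is an $N$‑independent constant $C_0$ with $\|\mathcal{M}_{\cA}\|_{L^\infty(\Omega)},\ \|\mathcal{M}_{\cB}\|_{L^\infty(\Omega)}\le C_0$.

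Writing $\bm{e}:=\bm\chi-\cI\bm\chi$ and using $\mathcal{Q}\bm\chi-\mathcal{Q}(\cI\bm\chi)=\mathcal{Q}\bm{e}$, the first step is the pointwise bound
\be
\|\mathcal{Q}\bm{e}\|_\infty\ \le\ \tfrac{1}{2}\|\nabla^2\bm{e}\|_\infty+C_0\|\bm{e}\|_\infty\ \lesssim\ \max_{|\bm\alpha|\le2}\ \|\partial^{\bm\alpha}\bm{e}\|_\infty,
\ee
valid for $\mathcal{Q}=\cA$ and (even more easily for $\mathcal{Q}=\cB$, the Laplacian term being absent); here I use $\|\nabla^2 e_j\|_\infty\le d\max_i\|\partial_{x_i}^{2} e_j\|_\infty$ componentwise. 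So the estimate is reduced to controlling the Fourier interpolation error of $\bm\chi$ and of its derivatives up to order two in $L^\infty$.

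The second step is the (sharp) interpolation estimate: for each scalar component $w\in H^m_p(\Omega)$, $m>d/2+2$, and any multi‑index $|\bm\alpha|\le2$,
\be
\|\partial^{\bm\alpha}(w-\cI w)\|_\infty\ \lesssim\ N^{\,|\bm\alpha|+\frac{d}{2}-m}\,\|w\|_m.
\ee
This follows directly from the Fourier series of $w$: $w-\cI w$ splits into a truncation part (frequencies $|\bk|_\infty>N/2-1$) and an aliasing part (each aliased frequency $\bk+N\bm{j}$, $\bm{j}\neq0$, again having modulus $\ge N/2$); bounding $\|\partial^{\bm\alpha}(\cdot)\|_\infty$ by the $\ell^1$‑norm of the Fourier coefficients weighted by $|\bk|^{|\bm\alpha|}$ and applying Cauchy--Schwarz against $\sum_{|\bk|_\infty>N/2-1}|\bk|^{2|\bm\alpha|-2m}$ — a series that converges precisely when $m>|\bm\alpha|+d/2$ and is of order $N^{\,d+2|\bm\alpha|-2m}$ — yields the bound, the aliasing contribution being estimated the same way via $|\bk|^{|\bm\alpha|}\le|\bk+N\bm{j}|^{|\bm\alpha|}$. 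Feeding this into the previous display with $w=\chi_j$, $j=1,0,-1$, gives
\be
\|\mathcal{Q}\bm\chi-\mathcal{Q}(\cI\bm\chi)\|_\infty\ \lesssim\ N^{\,2+\frac{d}{2}-m}\,\|\bm\chi\|_m\ =\ N^{-(m-\frac{d}{2}-2)}\,\|\bm\chi\|_m ,
\ee
and since $\bm\chi=\bu$ or $\bv$ lies in $(H_p^m(\Omega))^3$ the factor $\|\bm\chi\|_m$ is a finite $N$‑independent constant; the argument used nothing about $\bu,\bv$ beyond this regularity, so it covers both cases simultaneously.

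The main obstacle is obtaining the \emph{endpoint‑sharp} $W^{2,\infty}$ interpolation rate with exactly the exponent $m-\tfrac{d}{2}-2$: routing the $L^\infty$ bound through a Sobolev embedding $H^s\hookrightarrow W^{2,\infty}$ would require $s>d/2+2$ and lose a factor $N^{s-(d/2+2)}$, whereas the direct Fourier‑coefficient argument is precisely what makes the borderline tail sum $\sum_{|\bk|>N/2}|\bk|^{4-2m}$ converge under the hypothesis $m>d/2+2$ and recovers the stated exponent without loss; keeping track of the aliasing terms with the same bound is the only technical nuisance. A secondary and more routine point is justifying the uniform bound $C_0$ on $\mathcal{M}_{\cA},\mathcal{M}_{\cB}$, i.e.\ the smoothness and boundedness of $\Phi_g$ on $\Omega$.
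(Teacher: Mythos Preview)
Your proof is correct and follows essentially the same approach as the paper: split $\cA$ into the Laplacian plus a bounded multiplication operator, then invoke the $L^\infty$ Fourier interpolation estimate $\|\partial^{\bm\alpha}(\chi-\cI\chi)\|_\infty\lesssim N^{-(m-d/2-|\bm\alpha|)}$ (the paper's Lemma~\ref{FouApprox} and Corollary~\ref{coro_approx}) with $|\bm\alpha|\le 2$. The only cosmetic differences are that the paper treats the potential term $V\chi$ separately rather than folding it into the bounded multiplier, and quotes the interpolation lemma from the literature whereas you sketch its truncation-plus-aliasing proof.
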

To prove the above theorem, the following preparations are required.
\begin{lem}[\textbf{Fourier spectral approximation} \cite{LiuZhang_opt,SpectralBkShen}] \label{FouApprox}
 For $\chi(\bx) \in H_p^{m}(\Omega)$  with $m> d/2$, and its Fourier pseudo-spectral approximation $\cI \chi$,
 we have the following error estimates
\bea
\| \partial^{\bm{\alpha}}( \chi- \cI \chi )\|_{\infty} & \lesssim & N^{-(m -\frac{d}{2}- |\bm{\alpha}|)} |\chi|_{m} ,\quad\quad ~0\leq |\bm\alpha|\leq m, \label{approx_lem}\\
\| \partial^{\bm{\alpha}}( \chi- \cI\chi )\| & \lesssim & N^{-(m - |\bm{\alpha}|)} |\chi|_{m} ,\quad\quad ~0\leq |\bm\alpha|\leq m. \label{approx_lem_2}
\eea
\end{lem}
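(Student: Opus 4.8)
The plan is to reduce the interpolation error to the decay of the exact Fourier coefficients of $\chi$ through the classical truncation/aliasing split. Writing the $2L$-periodic function on $\Omega=\mathcal{D}_L$ as $\chi=\sum_{\bm{k}\in\mathbb{Z}^d}\widehat{\chi}_{\bm{k}}W_{\bm{k}}$ with $W_{\bm{k}}(\bx)=e^{\im\,\bm{\mu}_{\bm{k}}\cdot(\bx+\mathbf{L})}$ and $\bm{\mu}_{\bm{k}}=\pi\bm{k}/L$, Parseval's identity yields the norm equivalence $|\chi|_m^2\asymp\sum_{\bm{k}}|\bm{\mu}_{\bm{k}}|^{2m}|\widehat{\chi}_{\bm{k}}|^2$, the combinatorial constant relating $\sum_{|\bm{\alpha}|=m}\prod_j\mu_{k_j}^{2\alpha_j}$ to $|\bm{\mu}_{\bm{k}}|^{2m}$ (via the multinomial theorem) depending only on $m$ and $d$. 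With $S_N=\{-N/2,\dots,N/2-1\}^d$ and $P_N\chi=\sum_{\bm{k}\in S_N}\widehat{\chi}_{\bm{k}}W_{\bm{k}}$ the $L^2$-projection onto $X_N$, I split $\chi-\cI\chi=(\chi-P_N\chi)+(P_N\chi-\cI\chi)$ into the truncation and aliasing errors and estimate each in both norms.

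For the truncation term I would exploit that $|\bm{\mu}_{\bm{k}}|\gtrsim N$ for every $\bm{k}\notin S_N$. Factoring $|\bm{\mu}_{\bm{k}}|^{2|\bm{\alpha}|}=|\bm{\mu}_{\bm{k}}|^{2(|\bm{\alpha}|-m)}|\bm{\mu}_{\bm{k}}|^{2m}$ and pulling the first factor, which is $\lesssim N^{2(|\bm{\alpha}|-m)}$ since $|\bm{\alpha}|\le m$, out of the sum gives
\be
\|\partial^{\bm{\alpha}}(\chi-P_N\chi)\|^2\lesssim N^{2(|\bm{\alpha}|-m)}\sum_{\bm{k}\notin S_N}|\bm{\mu}_{\bm{k}}|^{2m}|\widehat{\chi}_{\bm{k}}|^2\lesssim N^{2(|\bm{\alpha}|-m)}|\chi|_m^2,
\ee
which is the truncation part of \eqref{approx_lem_2}. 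For the $\infty$-norm I bound the supremum of the differentiated series by the sum of coefficient moduli, split $|\bm{\mu}_{\bm{k}}|^{|\bm{\alpha}|}=|\bm{\mu}_{\bm{k}}|^{|\bm{\alpha}|-m}|\bm{\mu}_{\bm{k}}|^{m}$, and apply Cauchy--Schwarz; the factor $\big(\sum_{\bm{k}\notin S_N}|\bm{\mu}_{\bm{k}}|^{2(|\bm{\alpha}|-m)}\big)^{1/2}$ converges precisely because $m-|\bm{\alpha}|>d/2$ and, by comparison with $\int_N^\infty r^{2(|\bm{\alpha}|-m)+d-1}\,\dif r$, scales like $N^{-(m-d/2-|\bm{\alpha}|)}$, giving \eqref{approx_lem}.

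The aliasing term is where the genuine work lies, and I expect it to be the main obstacle. Using the aliasing identity $\widetilde{\chi}_{\bm{k}}=\sum_{\bm{l}\in\mathbb{Z}^d}\widehat{\chi}_{\bm{k}+N\bm{l}}$ for $\bm{k}\in S_N$, I write $P_N\chi-\cI\chi=-\sum_{\bm{k}\in S_N}\big(\sum_{\bm{l}\neq\mathbf{0}}\widehat{\chi}_{\bm{k}+N\bm{l}}\big)W_{\bm{k}}$. The key observation is that for $\bm{k}\in S_N$ and $\bm{l}\neq\mathbf{0}$ one has $|\bm{\mu}_{\bm{k}}|\lesssim N\lesssim|\bm{\mu}_{\bm{k}+N\bm{l}}|$, so replacing $|\bm{\mu}_{\bm{k}}|$ by $|\bm{\mu}_{\bm{k}+N\bm{l}}|$ and re-indexing $\bm{k}'=\bm{k}+N\bm{l}$, which is a bijection of $\{(\bm{k},\bm{l}):\bm{k}\in S_N,\,\bm{l}\neq\mathbf{0}\}$ onto $\{\bm{k}'\notin S_N\}$ so that no term is double counted, collapses the aliasing bound into exactly the tail sum already treated for the truncation term; in the $L^2$ case one first applies Cauchy--Schwarz in $\bm{l}$ against the convergent weight $\sum_{\bm{l}\neq\mathbf{0}}|\bm{\mu}_{\bm{k}+N\bm{l}}|^{-2m}\lesssim N^{-2m}$. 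Hence the aliasing error carries the same powers $N^{|\bm{\alpha}|-m}$ and $N^{-(m-d/2-|\bm{\alpha}|)}$ as the truncation error, and adding the two pieces completes both estimates.

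Throughout, the hypothesis $m>d/2$, and more precisely $m>d/2+|\bm{\alpha}|$ for the $\infty$-estimate, which is satisfied in the regime $m>d/2+2$, $|\bm{\alpha}|\le2$ relevant to Theorem \ref{thm:stability}, is exactly what renders both the interpolated series absolutely convergent (so $\cI\chi$ and its derivatives are well defined) and the geometric-type factors above finite. The scalar bounds \eqref{approx_lem}--\eqref{approx_lem_2} then transfer to the vector-valued setting componentwise through $|\bm{\chi}|_m=(\sum_j|\chi_j|_m^2)^{1/2}$. All ingredients are standard Fourier spectral approximation theory, cf. \cite{LiuZhang_opt,SpectralBkShen}.
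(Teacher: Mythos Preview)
The paper does not supply its own proof of this lemma; it is stated as a known Fourier spectral approximation result with citations to \cite{LiuZhang_opt,SpectralBkShen}, and is then used as a black box in Corollary~\ref{coro_approx} and Theorem~\ref{thm:stability}. Your truncation/aliasing decomposition is exactly the classical argument one finds in those references (in particular the Shen--Tang--Wang text), so there is nothing to compare: you have reconstructed the standard proof that the paper is invoking.

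Your argument is sound. The only point worth flagging is the range of $|\bm\alpha|$ in \eqref{approx_lem}: as you yourself observe, the Cauchy--Schwarz step for the $\infty$-norm requires $m-|\bm\alpha|>d/2$, so the stated range $0\le|\bm\alpha|\le m$ is a little loose in the lemma as written. You handle this correctly by restricting attention to the regime $|\bm\alpha|\le 2$, $m>d/2+2$ actually used downstream, which is all the paper needs.
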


Since operators $\mathcal A$ and $\mathcal B$ contain only two types of operators, namely, the Laplacian operator $\nabla^2$
and function multiplication operation that involves  $V(\bx)$ \& $\phi_i^g\phi_j^g$, the following corollary is  a prerequisite for proving Theorem \ref{thm:stability}.
\begin{corollary}\label{coro_approx}
 For $\chi(\bx) \in H_p^{m}(\Omega)$  with $m> d/2$, and its Fourier pseudo-spectral approximation $\cI \chi$, we have
\bea
\|\nabla^2 \chi- \nabla^2(\cI \chi)\|_{\infty} & \lesssim & N^{-(m -\frac{d}{2}- 2)} |\chi|_{m},\\ \label{error_1}
\|V \chi- V(\cI\chi)\|_{\infty}  & \lesssim & N^{-(m -\frac{d}{2})} |\chi|_{m},\\ \label{error_2}
\|{\phi_i^g}{\phi_j^g} \chi- {\phi_i^g}{\phi_j^g}(\cI\chi)\|_{\infty}  & \lesssim & N^{-(m -\frac{d}{2})} |\chi|_{m} \ \ (i,j=1,0,-1).\label{error_3}
\eea
\end{corollary}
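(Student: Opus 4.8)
The plan is to reduce every bound in the corollary to the single Fourier pseudo-spectral estimate \eqref{approx_lem} of Lemma~\ref{FouApprox}, by separating off the differential operator in the first inequality and the fixed, $N$-independent multiplier functions in the last two.

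For the Laplacian bound I would use that exact differentiation acts on the trigonometric interpolant termwise, as already recorded in \eqref{laplace_approx}, so that $\nabla^2(\cI\chi)=\sum_{j=1}^d\partial_{x_j}^2(\cI\chi)$ and hence $\nabla^2\chi-\nabla^2(\cI\chi)=\sum_{j=1}^d\partial_{x_j}^2(\chi-\cI\chi)$. Taking $\|\cdot\|_\infty$, applying the triangle inequality over the $d$ summands, and invoking \eqref{approx_lem} with $|\bm{\alpha}|=2$ for each term gives $\|\nabla^2\chi-\nabla^2(\cI\chi)\|_\infty\le\sum_{j=1}^d\|\partial_{x_j}^2(\chi-\cI\chi)\|_\infty\lesssim N^{-(m-\frac{d}{2}-2)}|\chi|_m$, with the factor $d$ absorbed into the implied constant. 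No commutation of $\cI$ with $\partial^{\bm{\alpha}}$ is needed, since \eqref{approx_lem} estimates $\partial^{\bm{\alpha}}(\chi-\cI\chi)$ directly.

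For the two multiplication bounds, the decisive point is that both $V$ and $\phi_i^g\phi_j^g$ are bounded on the truncated box $\Omega=\mathcal{D}_L$ by constants independent of $N$: although the harmonic potential \eqref{Vpoten} grows quadratically on $\mathbb{R}^d$, its restriction to $\Omega$ obeys $\|V\|_{L^\infty(\Omega)}\le\frac{1}{2}\big(\max_{\alpha}\gm_\alpha^2\big)dL^2<\infty$, while $\phi_j^g\in H_p^m(\Omega)$ with $m>d/2$ and the Sobolev embedding $H^m(\Omega)\hookrightarrow L^\infty(\Omega)$ give $\|\phi_i^g\phi_j^g\|_{L^\infty(\Omega)}\le\|\phi_i^g\|_\infty\|\phi_j^g\|_\infty<\infty$. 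Then, writing $V\chi-V(\cI\chi)=V(\chi-\cI\chi)$ and $\phi_i^g\phi_j^g\chi-\phi_i^g\phi_j^g(\cI\chi)=\phi_i^g\phi_j^g(\chi-\cI\chi)$ and using $\|fg\|_\infty\le\|f\|_\infty\|g\|_\infty$, both left-hand sides are controlled by a constant times $\|\chi-\cI\chi\|_\infty$, and \eqref{approx_lem} at $|\bm{\alpha}|=0$ yields $\|\chi-\cI\chi\|_\infty\lesssim N^{-(m-\frac{d}{2})}|\chi|_m$, which is exactly the claimed rate.

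The only genuine subtlety --- the step I would flag --- is the unboundedness of the physical potential $V$ on the whole space: the proof crucially relies on working over the truncated computational domain $\Omega=\mathcal{D}_L$, where $\|V\|_{L^\infty(\Omega)}$ is finite and $N$-independent; once this is granted, the remaining arguments are routine combinations of the triangle inequality, submultiplicativity of the sup-norm, Sobolev embedding, and Lemma~\ref{FouApprox}.
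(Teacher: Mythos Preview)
Your proof is correct and follows exactly the route the paper intends: the paper itself omits the details, stating only that the corollary is easy to prove using Lemma~\ref{FouApprox}, and your argument does precisely this by invoking \eqref{approx_lem} at $|\bm{\alpha}|=2$ for the Laplacian term and at $|\bm{\alpha}|=0$ (after factoring out the $N$-independent bounded multipliers $V$ and $\phi_i^g\phi_j^g$ on the truncated box $\Omega$) for the remaining two inequalities.
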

Sine it is easy to prove the above corollary using Lemma \ref{FouApprox} and we shall omit details for brevity.

\vspace{0.2cm}

\noindent {\bf The proof of Theorem \ref{thm:stability}}:
\vspace{-0.2cm}
\begin{proof}
For simplicity, we only prove the $\mathcal Q=\mathcal A$ case and extensions to other operators are similar.
From \eqref{approx_lem}-\eqref{error_2}, we obtain
$$\|(L\mathbf{I}_3-\Lambda)\bm \chi-(L\mathbf{I}_3-\Lambda)(\cI{\bm \chi})\|_{\infty}\lesssim N^{-(m-\frac{d}{2}-2)}|\bm \chi|_{m}.$$

For the second and third terms of $\mathcal A$ as shown in Eqn. \eqref{def_A} (the same as  $\mathbf A$ in \eqref{def_bfA}), using \eqref{approx_lem}, \eqref{error_3} and triangle inequality,
the following error estimate holds:
\beas
\left\|\big(\mathcal{A}-(L\mathbf{I}_3-\Lambda)\right)\bm \chi - \left(\mathcal{A}-(L\mathbf{I}_3-\Lambda)\big)(\cI{\bm \chi})\right\|_{\infty} \lesssim & N^{-(m-\frac{d}{2})}|\bm \chi|_{m}.
\eeas
Therefore, we finish the proof of Theorem \ref{thm:stability}.
\end{proof}

\subsection{Convergence analysis}
In this subsection, we will propose the convergence analysis of Fourier pseudo-spectral approximations. To begin with, and adopt the Sobolev space $\mathbb{V} = (H^1_p(\Omega))^3\times (H^1_p(\Omega))^3$ with the following norm
$$\|\bm \Phi\|_{1} = \sqrt{\|\bm f\|_{1}^2+\|\bm g\|_{1}^2}~\mbox{ and }~ \|\bm \Phi\| = \sqrt{\|\bm f\|^2+\|\bm g\|^2}, \quad ~\forall \,\bm\Phi:= (\bm f;\bm g) \in \mathbb V.$$

The BdG equation \eqref{HpHm-eq} is equivalent to the following eigenvalue problem:
\bea \label{HpHm-eq-1}
\widetilde\cH\begin{bmatrix}
	\bbf  \\ \bbg
	\end{bmatrix} :=
\begin{bmatrix}
\mathcal H_+&\mathcal O\\
	\mathcal O&\mathcal H_-
\end{bmatrix}
\begin{bmatrix}
	\bbf  \\ \bbg
	\end{bmatrix} =
	\omega \begin{bmatrix}
0&1\\
	1&0
\end{bmatrix}
\begin{bmatrix}
	 \bbf  \\ \bbg
\end{bmatrix}=:
	\omega \mathcal J
\begin{bmatrix}
	 \bbf  \\ \bbg
\end{bmatrix}.
\eea
The Galerkin weak form of Eqn.~\eqref{HpHm-eq-1} reads as: to find $0\neq \omega\in \mathbb{R}$ and $\bm \Phi=(\bm f;\bm g) \in \mathbb{V}$ such that
\bea\label{BdG-wf}
a(\bm \Phi,\bm\Psi) = \omega\, b(\bm \Phi,\bm\Psi), \ \ \ \forall\, \bm\Psi= (\bm\xi;\bm\eta)\in \mathbb{V},
\eea
subject to constraint $b(\bm \Phi,\bm \Phi) = 1/2$ with
\beas
a(\bm \Phi,\bm\Psi) := \lag \cH_+ \bm f,\bm\xi\rag  + \lag \cH_-\bm g,\bm\eta\rag, \quad b(\bm \Phi,\bm\Psi) := \lag\bm g,\bm\xi\rag + \lag\bm f,\bm\eta\rag,
\eeas
where $\lag \bm f, \bm g\rag:=\sum_{j=-1}^1\int_{\mathbb{R}^d} {f_j}(\bx)\overline{g_j(\bx)}\, \dif \bx,~ \forall \bm f,\bm g \in (L^2(\Omega))^3$.

\begin{remark}
From Theorem \ref{thm:ferromagnetic} or Remark \ref{rem:anti-ferr}, we can prove that there exists $c_0>0$ such that
\begin{equation}\label{coer_BdG}
a(\bm \Phi,\bm\Phi) \geq c_0\|\bm \Phi\|_{1},\quad \forall\, \bm\Phi \in \mathbb{U},
\end{equation}
where the subspace $\mathbb{U}$ is defined as
$$ \mathbb{U} = \ns(\cH_+)^\bot\times \ns(\cH_-)^\bot \subset \mathbb{V},$$
with $\ns(\cH_\pm)^\bot := \{\bm f \in (H_p^1(\Omega))^3\, |\, \lag\bm f,\bm\xi\rag = 0, ~\forall\, \bm\xi \in \ns(\cH_\pm)\}$.
It is important to point out that solving the BdG equation \eqref{BdG-wf} in $\mathbb{U}$ is equivalent to solving non-zero $\omega$ and its associated eigenfunction $\bm{\Phi}$ in $\mathbb{V}$.
\end{remark}
Define the approximation finite dimensional spaces $X_N$ of $H^1_p(\Omega)$ and $\mathbb{V}_N$ of $\mathbb{V}$ as follows:
$${X}_N:= \spa\left\{W_k(\bx), k = -N/2,\ldots, N/2-1\right\},\quad \mathbb{V}_N := ({X}_N)^3\times ({X}_N)^3,$$
and consider its approximation problem: to find $0\neq \omega_N \in \mathbb{R}$ and $\bm \Phi_N=(\bm f_N;\bm g_N)\in \mathbb{V}_N$ such that
\bea\label{BdG-wf-a}
a_N(\bm \Phi_N,\bm\Psi_N) = \omega_N\,b_N(\bm \Phi_N,\bm\Psi_N), \ \ \ \forall\, \bm\Psi_N=(\bm \xi_N;\bm \eta_N)\in \mathbb{V}_N,
\eea
subject to normalization constraint $b_N(\bm \Phi_N,\bm\Phi_N)={1}/{2}$ with
\beas
a_N(\bm \Phi_N,\bm\Psi_N) = \lag\cH_+\bm f_N,{\bm\xi_N}\rag_N+ \lag\cH_-\bm g_N,{\bm\eta_N}\rag_N, \quad
b_N(\bm \Phi_N,\bm\Psi_N) = \lag\bm g_N,{\bm\xi_N}\rag_N + \lag\bm f_N,{\bm\eta_N}\rag_N,
\eeas
 and  $\lag\bm f,\bm \xi\rag_N:= \sum_{j=-1}^1\lag f^N_j,\xi^N_j\rag_N := \sum_{j=-1}^1 \big(\frac{2L}{N}\sum_{n=0}^N{f_j^N}(\bx_n)\overline{g_j^N(\bx_n)}\big)$.

 \

\noindent Similar as Lemma A.\ref{lem:equivalent}, we have the following equivalence result and choose to omit proofs for brevity.
\begin{lem}
The discrete problem \eqref{lrep-d} and discrete variational problem \eqref{BdG-wf-a} are equivalent.
\end{lem}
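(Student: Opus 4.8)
The plan is to establish the equivalence by a direct translation between the algebraic eigenvalue problem \eqref{lrep-d} and the discrete variational problem \eqref{BdG-wf-a}, passing through the identification of a Fourier pseudo-spectral function in $\mathbb{V}_N$ with its vector of grid values. First I would fix the basis: any $\bm\Phi_N = (\bm f_N;\bm g_N)\in\mathbb{V}_N$ is uniquely determined by the discrete vectors $\bm f_N, \bm g_N\in\mathbb{R}^{3N}$ of its values on the grid $\mathcal{T}_{\bx}$, since the Fourier interpolation operator $\mathcal{I}_N$ is a bijection between grid-value vectors and $X_N$. Under this identification the discrete inner product $\lag\cdot,\cdot\rag_N$ becomes, up to the quadrature weight $\tfrac{2L}{N}$ (and its $d$-dimensional analogue), the Euclidean inner product on $\mathbb{R}^{3N}$: the key point is that the trapezoidal rule is exact for products of functions in $X_N$ in the relevant sense, so that $\lag\cI\chi,\cI\psi\rag_N = \tfrac{2L}{N}\,\bm\chi^{\sH}\bm\psi$ exactly. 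Likewise, the pseudo-spectral realizations of $\cH_\pm$ acting on $X_N$ are precisely the matrices $\mathbf{H}_\pm = \mathbf{A}\pm\mathbf{B}$ defined through \eqref{def_bfA}--\eqref{def_bfB}, because the Laplacian is diagonalized exactly on the plane-wave basis and the multiplication operators become the diagonal matrices $[\![\cdot]\!]$ by construction.

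Next I would expand the bilinear forms. Choosing $\bm\Psi_N$ to run over the basis functions $W_k$ in each of the six components, the identity $a_N(\bm\Phi_N,\bm\Psi_N) = \omega_N\, b_N(\bm\Phi_N,\bm\Psi_N)$ for all $\bm\Psi_N\in\mathbb{V}_N$ is equivalent to the two block equations $\mathbf{H}_+\bm f_N = \omega_N\bm g_N$ and $\mathbf{H}_-\bm g_N = \omega_N\bm f_N$, after cancelling the common quadrature weight. Stacking these yields exactly \eqref{lrep-d}. Conversely, given an eigenpair of \eqref{lrep-d}, reading the block rows and testing against arbitrary $\bm\xi_N,\bm\eta_N\in (X_N)^3$ recovers \eqref{BdG-wf-a}. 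The normalization constraint $b_N(\bm\Phi_N,\bm\Phi_N)=1/2$ translates into $\tfrac{2L}{N}(\bm f_N^{\sH}\bm g_N + \bm g_N^{\sH}\bm f_N)=1/2$, i.e. $\mathrm{Re}\,\lag\bm f_N,\bm g_N\rag = 1/4$ in the appropriately scaled discrete inner product, which is a harmless scalar rescaling of the eigenvector as already noted in the remark following \eqref{lrep-d}; since eigenvectors of \eqref{lrep-d} are determined only up to scaling, this constraint can always be met, so it does not affect the equivalence of the solution sets.

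The main obstacle I expect is the exactness of the discrete inner product: one must check carefully that $\lag\cdot,\cdot\rag_N$ reproduces $\lag\cdot,\cdot\rag$ on the finite-dimensional space at hand, or at least that the quadrature introduces only a uniform scalar factor that drops out of the eigenvalue relation. This is the aliasing issue — the trapezoidal rule is exact on $X_N$ but not on products of elements of $X_N$ unless one is in the pseudo-spectral (collocation) setting where the nonlinear terms are themselves evaluated pointwise and reinterpolated. Here, because $a_N$ and $b_N$ are defined directly through the nodal values and the diagonal multiplication matrices $[\![\cdot]\!]$ rather than through exact $L^2$ products, the identification is in fact immediate: $a_N$ and $b_N$ are by definition the bilinear forms associated with the matrices $\tfrac{2L}{N}\mathbf{H}_\pm$ and $\tfrac{2L}{N}\mathbf{J}$. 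Once this is made explicit the rest is bookkeeping, which is why, following the pattern of Lemma A.\ref{lem:equivalent}, the detailed verification is omitted for brevity.
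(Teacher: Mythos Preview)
Your proposal is correct and follows essentially the same route as the paper: the paper omits the proof here and refers to Lemma~A.\ref{lem:equivalent}, whose argument is precisely to test the collocation equations against the Fourier basis $W_j$, use the discrete orthogonality $\lag W_k,W_j\rag_N$ to identify $\lag\nabla u_N,\nabla W_j\rag_N$ with $\lag-\Delta u_N,W_j\rag_N$, and conclude that the matrix form and the variational form coincide. Your identification of $\lag\cdot,\cdot\rag_N$ with the scaled Euclidean inner product and of $\cH_\pm$ on $X_N$ with $\mathbf H_\pm$ is exactly this computation rephrased at the block level, and your treatment of the aliasing issue and the normalization constraint is a welcome clarification the paper leaves implicit.
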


To illustrate the convergence behavior, we introduce the following notation
\begin{equation}\label{infSupCondVecFun}
\delta_N(\bm\Phi) = \inf_{\bm\Psi\in \mathbb{V}_N}\bigg\{\|\bm\Phi-\bm\Psi\|_1 + \sup_{\bm \Theta\in \mathbb{V}_N}\frac{|a(\bm\Psi,\bm \Theta)-a_N(\bm\Psi,\bm \Theta)|}{\|\bm \Theta\|_1} \bigg\}.
\end{equation}
Similar as Lemma A.\ref{lem_convergence} for scalar function,
by the coerciveness of bilinear operator $a(\cdot,\cdot)$ (Eqn. \eqref{coer_BdG}) and the conformal subspace approximation, we obtain
\begin{equation}
\delta_N(\bm\Phi)\lesssim N^{-(m-\sigma)},\qquad \mbox{with}  \quad \sigma = \max\{1,d/2\}.
\end{equation}
According to the general theory given by Lemma A.\ref{lem:ErrEsti}, we derive the following error estimates.

\begin{lem}\label{lem:ErrEsti1}
For any eigenpair approximation $\{\omega_N;\bm\Phi_N\}$ of Eqn. \eqref{BdG-wf-a}, there is
an eigenpair $\{\omega;\bm \Phi\}$ of Eqn. \eqref{BdG-wf} corresponding to $\omega$ such that
\beas
\|\bm \Phi-\bm\Phi_N\|_{1}\lesssim \delta_N(\bm\Phi),\quad
\|\bm \Phi-\bm\Phi_N\| \lesssim \zeta_N\|\bm \Phi-\bm\Phi_N\|_{1},
\quad |\omega-\omega_N| \lesssim \|\bm \Phi-\bm\Phi_N\|_{1}^2,\label{convee}
\eeas
where the constant $\zeta_N$ approaches $0$ as $N\rightarrow \infty$.
\end{lem}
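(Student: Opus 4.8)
The plan is to reduce Lemma \ref{lem:ErrEsti1} to the abstract eigenvalue approximation theory invoked just before it (Lemma A.\ref{lem:ErrEsti}), which applies to a variational eigenvalue problem whose bilinear form $a(\cdot,\cdot)$ is coercive on a suitable subspace. First I would recall that, by the remark preceding the statement, solving the BdG weak form \eqref{BdG-wf} on the full space $\mathbb V$ for nonzero $\omega$ is equivalent to solving it on the subspace $\mathbb U = \ns(\cH_+)^\bot\times\ns(\cH_-)^\bot$, on which $a(\cdot,\cdot)$ satisfies the coercivity estimate \eqref{coer_BdG}. This puts us squarely in the setting where the abstract theory of Babuška--Osborn type (as packaged in Lemma A.\ref{lem:ErrEsti}) gives, for each discrete eigenpair $\{\omega_N;\bm\Phi_N\}$, an exact eigenpair $\{\omega;\bm\Phi\}$ with the same eigenvalue $\omega$ such that the three estimates hold with the consistency/approximation quantity $\delta_N(\bm\Phi)$ defined in \eqref{infSupCondVecFun}. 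The only thing to verify is that the hypotheses of that abstract lemma are met: conformity of $\mathbb V_N\subset\mathbb V$ (immediate, since $X_N\subset H^1_p(\Omega)$), coercivity on the relevant subspace (from \eqref{coer_BdG}), boundedness and symmetry of $a$ and $b$, and the compactness that makes the solution operator compact — which follows because $\cH_\pm$ are, up to bounded multiplication operators, shifts of $-\tfrac12\nabla^2 + V$, so the associated resolvents are compact on $L^2$ by Rellich--Kondrachov on the bounded domain $\Omega$.

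Next I would supply the two pieces the abstract lemma leaves to the concrete discretization: the $H^1$ error is controlled by $\delta_N(\bm\Phi)$, which is exactly the content of the first estimate; the $L^2$ superconvergence factor $\zeta_N\to0$ comes from an Aubin--Nitsche duality argument — one solves the adjoint (here self-adjoint) problem with right-hand side the $L^2$ error, uses elliptic regularity of $-\tfrac12\nabla^2+V$ on $\Omega$ to gain one Sobolev derivative, and then invokes the Fourier approximation estimate \eqref{approx_lem_2} of Lemma \ref{FouApprox}, whose rate tends to zero as $N\to\infty$, giving $\zeta_N\lesssim N^{-1}$ (or a slightly weaker power depending on $d$). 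Finally, the eigenvalue estimate $|\omega-\omega_N|\lesssim \|\bm\Phi-\bm\Phi_N\|_1^2$ is the standard quadratic bound for eigenvalues of a symmetric (here formally self-adjoint after the $\mathcal J$-pairing) variational problem: write $\omega-\omega_N$ via the Rayleigh-quotient-type identity $\omega\,b(\bm\Phi,\bm\Phi)-\omega_N\,b(\bm\Phi_N,\bm\Phi_N)$, expand around $\bm\Phi-\bm\Phi_N$, and use that the first-order term vanishes because $\bm\Phi$ is a critical point; the consistency error $|a(\bm\Phi_N,\bm\Theta)-a_N(\bm\Phi_N,\bm\Theta)|$ is already absorbed into $\delta_N$ and is of the same or higher order.

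The main obstacle is the indefiniteness and non-self-adjointness of the BdG operator in its original form \eqref{HpHm-eq}: the pencil $(\widetilde\cH,\mathcal J)$ in \eqref{HpHm-eq-1} is symmetric but $\mathcal J$ is indefinite, so one cannot directly apply min-max theory. The remedy — and the step I would be most careful about — is to work with the equivalent formulation on $\mathbb U$ where $a$ is genuinely coercive, and to check that the discrete subspace $\mathbb V_N$ interacts correctly with the splitting into $\ns(\cH_\pm)$ and its orthogonal complement: one needs that the discrete nullspaces are close to the continuous ones (controlled, via the analytical characterization in Theorem \ref{thm:ferromagnetic} / Remark \ref{rem:anti-ferr}, by the spectral accuracy of $X_N$ on the smooth ground-state-based functions $\Phi_1,\Phi_2,\widehat\Phi_j$), so that the discrete inf-sup constant stays bounded below uniformly in $N$. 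Once that uniform discrete stability is in hand, the abstract machinery delivers all three estimates, and $\delta_N(\bm\Phi)\lesssim N^{-(m-\sigma)}$ from the preceding display converts them into the claimed spectral rates.
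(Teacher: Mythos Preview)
Your proposal is correct and follows essentially the same approach as the paper: the paper does not give a separate proof of this lemma but simply states that it follows from the general Babu\v{s}ka--Osborn type result packaged as Lemma~A.\ref{lem:ErrEsti}, using the coercivity \eqref{coer_BdG} on $\mathbb U$ and the conformal approximation $\mathbb V_N\subset\mathbb V$. Your write-up supplies considerably more detail---verifying compactness, sketching the Aubin--Nitsche duality for $\zeta_N\to0$, the Rayleigh-quotient identity for the quadratic eigenvalue bound, and flagging the interaction of $\mathbb V_N$ with the nullspace splitting---none of which the paper spells out, but the underlying route is the same.
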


Finally, via a change of variables, i.e., Eqn \eqref{c-o-v} and \eqref{c-o-v-h}, we obtain  error estimates for $(\bm u,\bm v)$.
\begin{thm}[\textbf{Error Estimates}]\label{thm:error-estimate}
If $\bm u,\bm v \in (H_p^{m}(\Omega))^3$  with $m\geq1$ and $\mathrm{supp}\{\bu\}, \mathrm{supp}\{\bv\} \subsetneq \Omega$, then for any eigenpair approximation $\{\omega_N;\bm{u}_N,\bm{v}_N\}$ of \eqref{BdG-d}, there is
an eigenpair $\{\omega;\bm u,\bm v\}$ of Eqn.~\eqref{BdG_new} satisfying the
following error estimates
\beas
\|\bm{u}-\bm{u}_N\|+\|\bm{v}-\bm{v}_N\| &\lesssim& N^{-(m-\sigma)}, \\
|\omega-\omega_N| &\lesssim& N^{-2(m-\sigma)},
\eeas
where $\sigma = \max\{1,d/2\}$.
\end{thm}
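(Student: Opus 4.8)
The plan is to transfer the abstract Galerkin error estimates of Lemma \ref{lem:ErrEsti1} — stated for the symmetrized system \eqref{HpHm-eq-1} in the variables $(\bm f,\bm g)$ — back to the original Bogoliubov amplitudes $(\bm u,\bm v)$ via the change of variables \eqref{c-o-v} on the continuous level and \eqref{c-o-v-h} on the discrete level, and then bound $\delta_N(\bm\Phi)$ by the Fourier approximation rate. First I would note that $\bm u = \bm f + \bm g$, $\bm v = \bm f - \bm g$ (and the analogous discrete identities with $\bm u_N = \bm f_N+\bm g_N$, $\bm v_N = \bm f_N-\bm g_N$), so that
\[
\|\bm u - \bm u_N\| + \|\bm v - \bm v_N\| \le 2\big(\|\bm f - \bm f_N\| + \|\bm g - \bm g_N\|\big) \le 2\sqrt{2}\,\|\bm\Phi - \bm\Phi_N\|,
\]
and $\omega$ is unchanged by the transformation, so $|\omega - \omega_N|$ is literally the same quantity in both formulations. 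Thus it suffices to estimate $\|\bm\Phi - \bm\Phi_N\|$ and $|\omega-\omega_N|$ through Lemma \ref{lem:ErrEsti1}.

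Next I would insert the bound on $\delta_N(\bm\Phi)$. Lemma \ref{lem:ErrEsti1} gives $\|\bm\Phi-\bm\Phi_N\|_1 \lesssim \delta_N(\bm\Phi)$, $\|\bm\Phi-\bm\Phi_N\| \lesssim \zeta_N\|\bm\Phi-\bm\Phi_N\|_1$ with $\zeta_N\to 0$, and $|\omega-\omega_N|\lesssim \|\bm\Phi-\bm\Phi_N\|_1^2$. From the regularity hypothesis $\bm u,\bm v\in (H^m_p(\Omega))^3$ it follows (since the change of variables is a bounded linear bijection on each Sobolev space) that $\bm f,\bm g\in (H^m_p(\Omega))^3$, and the paper has already established $\delta_N(\bm\Phi)\lesssim N^{-(m-\sigma)}$ with $\sigma=\max\{1,d/2\}$, using the coercivity \eqref{coer_BdG} on $\mathbb U$ together with the conforming Fourier subspace $\mathbb V_N$ and the quadrature-consistency term in \eqref{infSupCondVecFun}. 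Combining these: $\|\bm\Phi-\bm\Phi_N\|_1\lesssim N^{-(m-\sigma)}$, hence $\|\bm\Phi-\bm\Phi_N\| \lesssim \zeta_N N^{-(m-\sigma)} \lesssim N^{-(m-\sigma)}$ (absorbing the vanishing factor $\zeta_N$ into the constant), and $|\omega-\omega_N| \lesssim N^{-2(m-\sigma)}$. Pulling back through the change of variables yields exactly the two displayed estimates.

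The main obstacle — and the place one has to be a little careful rather than purely mechanical — is the compatibility between the reductions: Lemma \ref{lem:ErrEsti1} is for nonzero eigenvalues, and its proof goes through the restriction to $\mathbb U = \ns(\cH_+)^\perp \times \ns(\cH_-)^\perp$, where coercivity \eqref{coer_BdG} holds. One should check that (i) a nonzero eigenpair $\{\omega;\bm u,\bm v\}$ of \eqref{BdG_new} corresponds, under \eqref{c-o-v}, to a nonzero eigenpair of \eqref{HpHm-eq-1} whose eigenfunction lies in $\mathbb U$ (this uses the remark that the constraint forces $\omega\ne 0$ and that the eigenfunction is orthogonal to the generalized nullspace components characterized in Theorem \ref{thm:ferromagnetic} / Remark \ref{rem:anti-ferr}), and (ii) the discrete eigenpair from \eqref{BdG-d} maps, under \eqref{c-o-v-h}, to a discrete eigenpair of \eqref{BdG-wf-a} in $\mathbb V_N$, so that Lemma \ref{lem:ErrEsti1} applies verbatim. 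Once this bookkeeping is done, the scaling of the constraint ($b(\bm\Phi,\bm\Phi)=1/2$ versus $\int(|\bm u|^2-|\bm v|^2)=1$, reconciled by \eqref{constrainfg}) only affects normalization constants, not the convergence rate, so the remaining steps are the routine substitutions indicated above.
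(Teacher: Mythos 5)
Your proposal is correct and follows essentially the same route as the paper: the paper's entire argument for Theorem \ref{thm:error-estimate} is the single remark that the estimates follow from Lemma \ref{lem:ErrEsti1} together with the bound $\delta_N(\bm\Phi)\lesssim N^{-(m-\sigma)}$ via the change of variables \eqref{c-o-v} and \eqref{c-o-v-h}, which is exactly what you do. The extra bookkeeping you supply (regularity transfer under the linear change of variables, the correspondence of nonzero eigenpairs and the restriction to $\mathbb{U}$, and the normalization constants) is detail the paper leaves implicit, and it is all sound.
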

%
%

\section{Numerical results}
\label{NumResult}
In this section, we first carry out comprehensive numerical studies to illustrate the accuracy and efficiency of our solver. Then we apply it to investigate the Bogoliubov excitations around the ground states of spin-1 BEC.
The ground states $\Phi_g$ and the chemical potentials are precomputed with accuracy close to machine precision
 via PCG method \cite{ATZ-CiCP-PCG} in a large enough domain with a small enough mesh size.
Unless stated otherwise, we choose the potential $V(\bx)$ as the harmonic trapping potential \eqref{Vpoten}
and set the computational domain as squares and $L= 16$ for 1D \& 2D and $L= 8$ for 3D respectively, i.e., $\mathcal{D}=[-16, 16]$ in 1D, $\mathcal{D}=[-16, 16]^2$ in 2D and $\mathcal{D}=[-8, 8]^3$ in 3D respectively.
The domain is discretized uniformly in each spatial direction with the same mesh size $h=2L/N$ for simplicity.
We choose to study the ferromagnetic (\textbf{FM} for short) case with $\beta_n=885.4, ~\beta_s=-4.1$ and antiferromagnetic (\textbf{Anti-FM} for short) case with $\beta_n=240.8, ~\beta_s=7.5$ throughout the whole section.

\subsection{Performance investigation}
\label{Accuracy tests}
Firstly we verify the spectral accuracy for different space dimensions.
Analytical eigenvalues and eigenvectors were given for harmonic trapping potential by equation \eqref{eq:eig-pair} in {Theorem \ref{lemAnalytical}}.
For an eigenvalue $\og$ with multiplicity $k$, we denote its associated analytical eigenspaces as $\mathcal M_{\bm{u}}:={\rm span} \{\bm{u}_1, \cdots, \bm{u}_k\}$, $\mathcal M_{\bm{v}}:={\rm span} \{\bm{v}_1, \cdots, \bm{v}_k\}$, and denote $\{\omega_{\alpha,N};\bm{u}_{\alpha,N}, \bm{v}_{\alpha,N}\}$ as the numerical approximations of eigenvalue $\omega_{\alpha}$ and eigenfunctions $(\bm{u}_\alpha, \bm{v}_\alpha)$ obtained with mesh size $h= 2L/N$. To demonstrate the convergence,  we adopt the following error functions
\[
e_{\og_\alpha}^h:= \fl{|\omega_{\alpha,N}-\omega_\alpha|}{|\omega_\alpha|},
\qquad
e_{\bm{uv}}^{h,\alpha}:=\fl{\| \bm{u}_{\alpha,N}-\mathcal{P}_{\bm{u}}  \bm{u}_{\alpha,N}\|_2}{\| \bm{u}_{\alpha,N} \|_2}
+ \fl{\| \bm{v}_{\alpha,N}-\mathcal{P}_{\bm{v}} \bm{v}_{\alpha,N}\|_2}{\|\bm{v}_{\alpha}^h\|_2},
\] where $\alpha=x$ in 1D, $\alpha=x, y$ in 2D and $\alpha=x, y,z$ in 3D,
$\|\cdot\|_2$ is the discrete $l^2$ norm and  $\mathcal{P}_\nu$ ($\nu=\bm{u},\bm{v}$)
is the $l^2$-orthogonal projection operator into space $\mathcal M_\nu$.
 In both examples, we compute the first $40$ eigenvalues and their eigenfunctions.

\begin{exam}[\textbf{Accuracy}]
\label{Exam_Accuracy_Test}
We study the accuracy convergence for both ferromagnetic and antiferromagnetic cases in 1D/2D/3D.
To this end, we consider the following four cases
\begin{itemize}
\item[] \hspace{-0.5cm}{\bf Case I.}  1D case: $\gamma_x = 1$.
\item[] \hspace{-0.5cm}{\bf Case II.}  Isotropic 2D case:  $\gamma_x = \gamma_y=1 $.
\item[]\hspace{-0.65cm} {\bf Case III.} Anisotropic 2D case: $\gamma_x =\gamma_y/2=1$.
\item[] \hspace{-0.5cm}{\bf Case IV.} Isotropic 3D case:  $\gamma_x=\gamma_y=\gamma_z=1 $.
\end{itemize}

\end{exam}

\noindent

For {\bf Case I}, there exists one analytical eigenvalue $\og_x=1$ with multiplicity $k = 1$.
For {\bf Case II}, there exist two analytical eigenvalues $\og_x=\og_y=1$ with multiplicity $k = 2$.
For {\bf Case III}, $\og_x=1$ and $\og_y=2$ are eigenvalues with the same multiplicity $k = 1$.
Similarly, for {\bf Case IV}, there exist three analytical eigenvalues $\og_x=\og_y=\og_z=1$ with $k=3$.
Table \ref{Err_1D}-\ref{Err_3D} illustrate the numerical errors of eigenvalues and eigenvectors computed with different mesh sizes $h$
in {\bf Case I--IV}.

\begin{table}[H]
\def\temptablewidth{1\textwidth}
\tabcolsep 0pt	\caption{Errors of the eigenvalue/eigenvector for {\bf Case I} in \textbf{1D}
for \textbf{FM} (upper) and \textbf{Anti-FM} cases (lower) in Example \ref{Exam_Accuracy_Test}.}
	\label{Err_1D}\small
	\begin{center}\vspace{-1.5em}
{\rule{\temptablewidth}{1pt}}
\begin{tabularx}{\temptablewidth}{@{\extracolsep{\fill}}c|cccccccc}
   & $ $ &$h_0=1$& $h_0/2$ & $h_0/4$ & $h_0/8$ & $h_0/16$\\[0.2em]
\hline
	\rule{0pt}{12pt}
\multirow{2}{*}{ \textbf{~FM~}}   &  $ e_{\og_x}^h $              &4.392E-03 & 9.570E-05 & 2.422E-08 & 1.383E-09 & 1.648E-12\\[0.1em]
	 & $ e_{\bm{uv}}^{h,x} $    &1.069E-01 & 1.234E-03 & 2.911E-06 & 4.481E-08 & 1.165E-12
 \\ [0.2em]
\hline
	\rule{0pt}{12pt}
\multirow{2}{*}{ \textbf{~Anit-FM~~} }   & $ e_{\og_x}^h $              &6.103E-02 & 1.831E-04 & 3.283E-10 & 6.677E-13 & 3.020E-14\\[0.1em]
   &  $ e_{\bm{uv}}^{h,x} $    &9.106E-01 & 1.156E-03 & 1.027E-06 & 1.609E-12 & 2.677E-12\\
\end{tabularx}
{\rule{\temptablewidth}{1pt}}
\end{center}
\end{table}

\vspace{-0.3cm}
\begin{table}[H]
\def\temptablewidth{1\textwidth}
\tabcolsep 0pt	\caption{Errors of the eigenvalue/eigenvector for {\bf Case II} in \textbf{2D}
for \textbf{FM} (upper) and \textbf{Anti-FM} cases (lower) in Example \ref{Exam_Accuracy_Test}.}
\label{Err_2D_1}\small
\begin{center}
\vspace{-1.5em}
{\rule{\temptablewidth}{1pt}}
\begin{tabularx}{\temptablewidth}{@{\extracolsep{\fill}}c|cccccccc}
    & $ $ & $h_0=1$& $h_0/2$ & $h_0/4$ & $h_0/8$ &$h_0/16$ \\[0.2em]
\hline
	\rule{0pt}{12pt}
                &$ e_{\og_x}^h $  				&6.022E-03 & 1.685E-05 & 7.811E-11 & 7.896E-13 & 5.014E-13\\[0.1em]
\multirow{2}{*}{ \textbf{~FM~}}   & $e_{\og_y}^h$  				&6.022E-03 & 1.685E-05 & 7.812E-11 & 8.007E-13 & 4.925E-13\\[0.4em]
   & $ e_{\bm{uv}}^{h,x}$        &3.503E-02 & 8.927E-04 & 4.250E-07 & 4.627E-08 & 2.751E-10\\[0.1em]
                &$ e_{\bm{uv}}^{h,y} $       &3.506E-02 & 8.928E-04 & 4.250E-07 & 2.625E-08 & 2.751E-10\\ [0.4em]
\hline
	\rule{0pt}{12pt}
                &$ e_{\og_x}^h $            		 &6.825E-03 & 1.907E-05 & 4.664E-10 & 1.281E-12 & 4.241E-13  \\[0.1em]
\multirow{2}{*}{ \textbf{~Anit-FM~~} }  &$ e_{\og_y}^h $  				 &6.825E-03 & 1.907E-05 & 2.822E-11 & 1.789E-11 & 4.607E-13  \\[0.4em]
 &$ e_{\bm{uv}}^{h,x} $        &5.013E-02 & 7.403E-04 & 1.558E-07 & 4.135E-10 & 1.147E-12  \\[0.1em]
                & $ e_{\bm{uv}}^{h,_y} $       &5.006E-02 & 7.403E-04 & 1.555E-07 & 5.368E-10 & 1.416E-12  \\
\end{tabularx}
{\rule{\temptablewidth}{1pt}}
\end{center}
\end{table}

\vspace{-0.5cm}
\begin{table}[H]
\def\temptablewidth{1\textwidth}
\tabcolsep 0pt	\caption{Errors of the eigenvalue/eigenvector for {\bf Case III} in \textbf{2D}
for \textbf{FM} (upper) and \textbf{Anti-FM} cases (lower) in Example \ref{Exam_Accuracy_Test}.}
\label{Err_2D_2}\small
\begin{center}
\vspace{-1.5em}
{\rule{\temptablewidth}{1pt}}
\begin{tabularx}{\temptablewidth}{@{\extracolsep{\fill}}c|ccccccccc}
& $ $ & $h_0=1$& $h_0/2$ & $h_0/4$ & $h_0/8$ &$h_0/16$ \\[0.2em]
\hline
	\rule{0pt}{12pt}
                &$e_{\og_x}^h $                 &9.823E-03 & 1.153E-05 & 2.936E-11 & 2.138E-10 & 3.044E-12 \\[0.1em]
\multirow{2}{*}{ \textbf{~FM~}}      &$e_{\og_y}^h $ 			&2.185E-02 & 1.514E-03 & 7.471E-08 & 6.092E-10 & 1.598E-12 \\[0.4em]
	     &$ e_{\bm{uv}}^{h,x}$           &2.644E-02 & 8.649E-04 & 6.728E-07 & 1.398E-08 & 6.398E-09 \\[0.1em]
                &$ e_{\bm{uv}}^{h,y}$  &1.077~~~~~~  & 9.706E-03 & 3.186E-05 & 4.781E-08 & 3.306E-09 \\ [0.4em]
\hline
	\rule{0pt}{12pt}
                &$ e_{\og_x}^h $                &7.222E-03&  3.431E-05 &1.844E-10 &3.888E-13 & 8.724E-13 \\[0.1em]
\multirow{2}{*}{ \textbf{~Anit-FM~~} }     &$ e_{\og_y}^h $                &4.328E-02&  5.590E-04 &4.039E-08 &1.648E-13 & 4.867E-13 \\ [0.4em]
&$ e_{\bm{uv}}^{h,x} $          &3.484E-02&  8.149E-04 &2.777E-07 & 1.562E-12 & 7.409E-11 \\[0.1em]
                &$ e_{\bm{uv}}^{h,y} $          &1.738E-01&  9.128E-03 &1.706E-05 & 4.258E-11 & 7.208E-12 \\[0.1em]
\end{tabularx}
{\rule{\temptablewidth}{1pt}}
\end{center}
\end{table}

\vspace{-0.5cm}
\begin{table}[H]
\def\temptablewidth{1\textwidth}
\tabcolsep 0pt	\caption{Errors of the eigenvalue/eigenvector for {\bf Case IV} in \textbf{3D}
for \textbf{FM} (upper) and \textbf{Anti-FM} cases (lower) in Example \ref{Exam_Accuracy_Test}.}
\label{Err_3D}\small
\begin{center}
\vspace{-1.5em}
{\rule{\temptablewidth}{1pt}}
\begin{tabularx}{\temptablewidth}{@{\extracolsep{\fill}}c|ccccccc}
                & $ $                               &$h_0=1$   & $h_0/2$   & $h_0/4$   & $h_0/8$ \\[0.2em]
\hline
	\rule{0pt}{12pt}
                &$ e_{\og_x}^h $         		   &3.526E-03 & 6.684E-06 & 6.108E-11 & 1.027E-11 \\  [0.1em]
                &$ e_{\og_y}^h $  			   &3.526E-03 & 6.684E-06 & 7.500E-12 & 1.028E-11 \\  [0.1em]
\multirow{2}{*}{ \textbf{FM}}         &$ e_{\og_z}^h $  			   &3.526E-03 & 6.684E-06 & 4.598E-11 & 1.028E-11 \\  [0.4em]
  &$ e_{\bm{uv}}^{h,x} $          &4.434E-02 & 6.697E-04 & 1.116E-07 & 4.046E-09 \\  [0.1em]
                &$ e_{\bm{uv}}^{h,y} $          &4.433E-02 & 6.697E-04 & 1.116E-07 & 4.046E-09 \\  [0.1em]
                &$ e_{\bm{uv}}^{h,z} $          &4.443E-02 & 6.697E-04 & 1.116E-07 & 4.046E-09 \\  [0.4em]
\hline
	\rule{0pt}{12pt}
                &$e_{\og_x}^h $         		   &2.061E-03 & 3.620E-06 & 1.602E-11 & 8.528E-13 \\  [0.1em]
                &$ e_{\og_y}^h $         		   &2.061E-03 & 3.620E-06 & 1.213E-12 & 5.934E-13 \\  [0.1em]
\multirow{2}{*}{ \textbf{~Anit-FM~~} }     &$ e_{\og_z}^h $         		   &2.061E-03 & 3.620E-06 & 3.218E-11 & 5.874E-13 \\  [0.4em]
  &$ e_{\bm{uv}}^{h,x} $          &5.726E-02 & 5.812E-04 & 5.043E-08 & 5.873E-11 \\  [0.1em]
                &$ e_{\bm{uv}}^{h,y} $          &5.712E-02 & 5.812E-04 & 5.042E-08 & 5.873E-11 \\  [0.1em]
                &$ e_{\bm{uv}}^{h,z} $           &5.713E-02 & 5.812E-04 & 5.043E-08 & 5.873E-11 \\  [0.1em]
\end{tabularx}
{\rule{\temptablewidth}{1pt}}
\end{center}
\end{table}

\vspace{-0.2cm}
\begin{figure}[H]
\centering
  \includegraphics[width=8.0cm,height=6.1cm]{./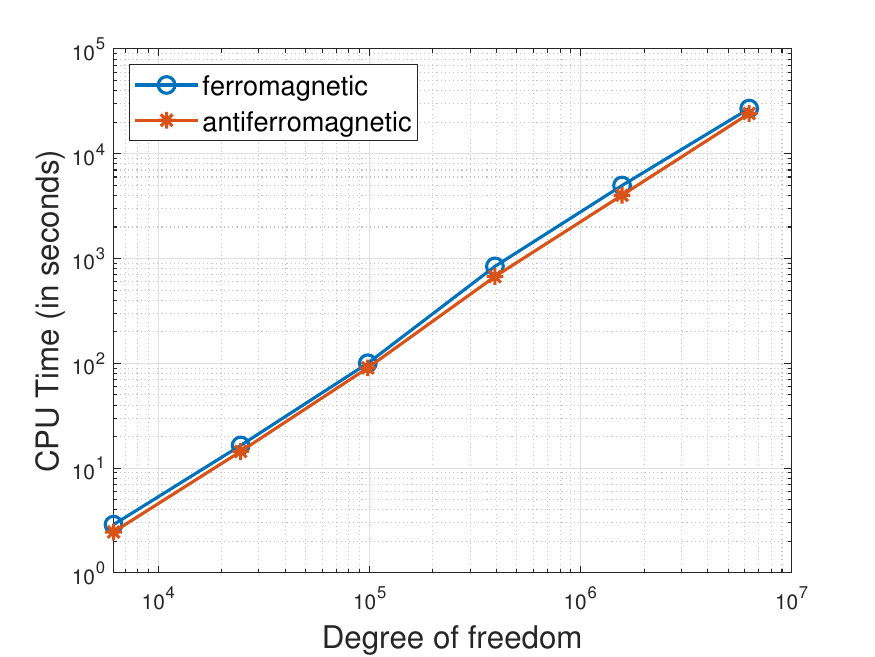}
  \includegraphics[width=8.0cm,height=6.1cm]{./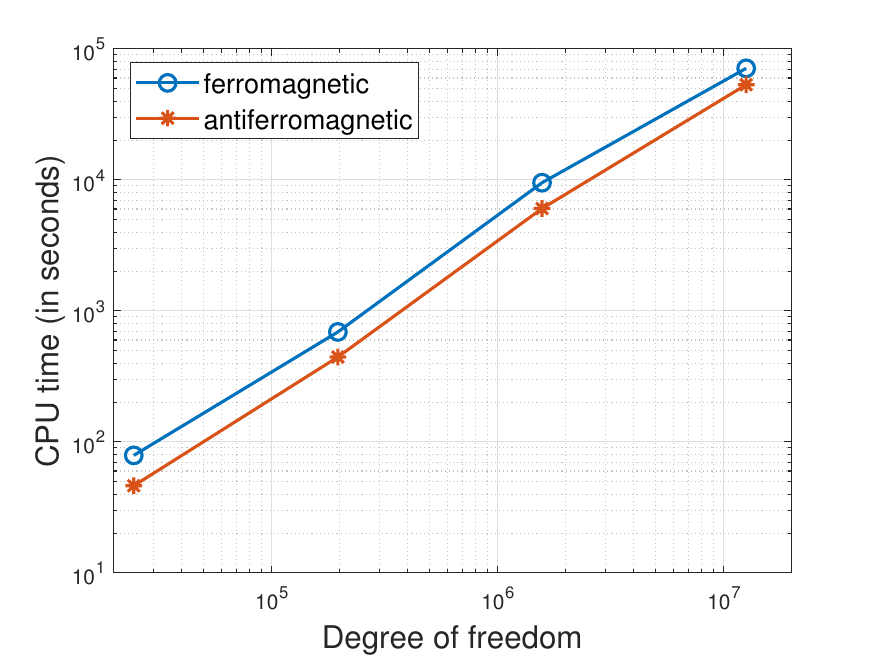}
\caption{The computational times for {\bf Case I} in {\bf 2D} (left) and  {\bf Case II} in {\bf 3D} (right) in Example \ref{Exam_Efficiency_Test}.}
\label{times_efficiency_test}
\end{figure}
\vspace{-1.5em}
\begin{exam}[\bf Efficiency]\label{Exam_Efficiency_Test}
We investigate the efficiency performance using different mesh sizes for ferromagnetic and antiferromagnetic cases in 2D and 3D.
To this end, we consider the following cases
\begin{itemize}
\item[] \hspace{-0.5cm}{\bf Case I.}  Isotropic 2D case: $\gamma_x = \gamma_y=1 $.
\item[] \hspace{-0.5cm}{\bf Case II.} Isotropic 3D case: $\gamma_x=\gamma_y=\gamma_z=1 $.
\end{itemize}

\end{exam}

Figure \ref{times_efficiency_test} presents the computation times (measured in seconds) versus the degree of freedom for 2D and 3D cases.
The degree of freedom varies ${\tt DOF} = 6\,144,~24\,576,~98\,304,~393\,216,~1\,572\,864,$\\ $6\,291\,456$ in 2D,
and ${\tt DOF} =~24\,576,~196\,608,~\,1\,572\,864,~12\,582\,912$ in 3D, while the number of eigenvalue is kept unchanged as ${\tt nev} = 40$.
It is obvious that the degree of freedom is much larger than the number of eigenvalues, i.e., ${\tt DOF}\gg{\tt nev}$, therefore, the overall computational complexity is around $\mathcal{O}({\tt DOF}\log ({\tt DOF}))$, which can be observed quite clear from Figure \ref{times_efficiency_test}.
Given the fact that the discrete BdG system is nonsymmetric and dense and the first ${\tt nev} = 40$ eigenpairs of 3D problem can be computed with a {\bf 12 million} degree of freedom {\bf within hours} using only solely sequentially one-CPU computing, it is reasonable to expect a promising future in physical applications.

\subsection{Applications}
\begin{figure}[H]
\centering
  \includegraphics[scale=0.35]{./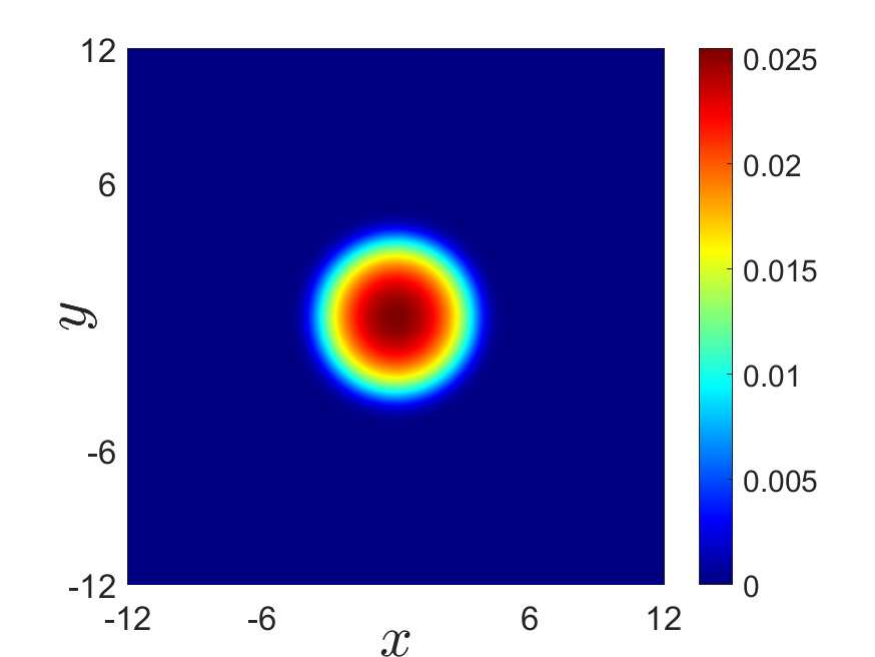}
  \hspace{-0.5cm}
  \includegraphics[scale=0.35]{./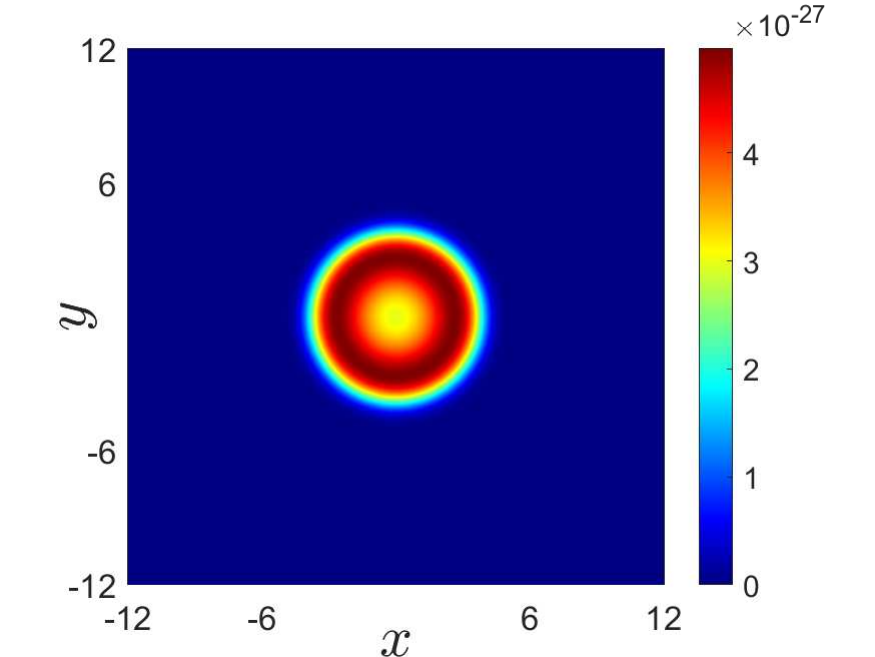}
  \hspace{-0.5cm}
  \includegraphics[scale=0.35]{./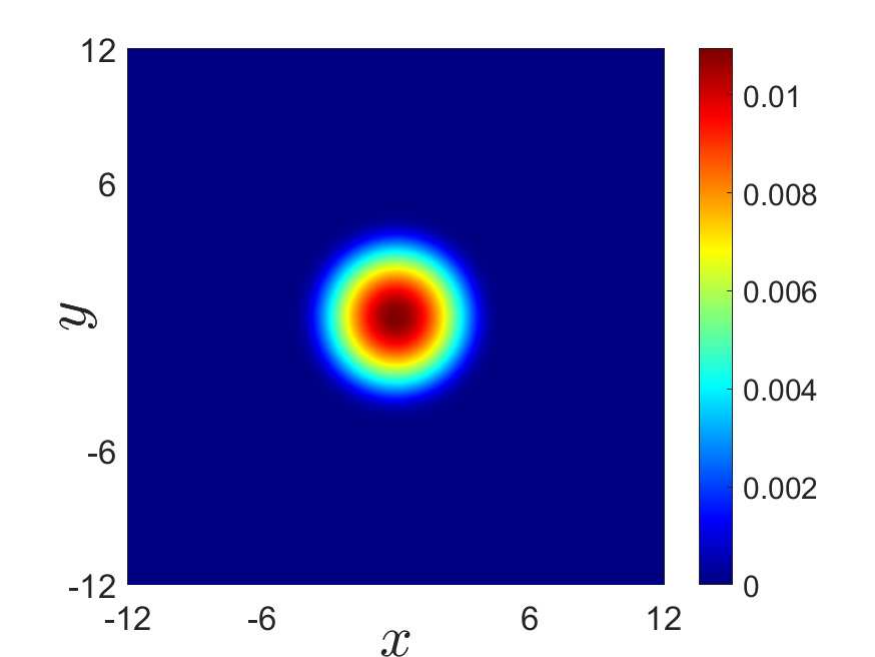}
 \\
  \includegraphics[scale=0.35]{./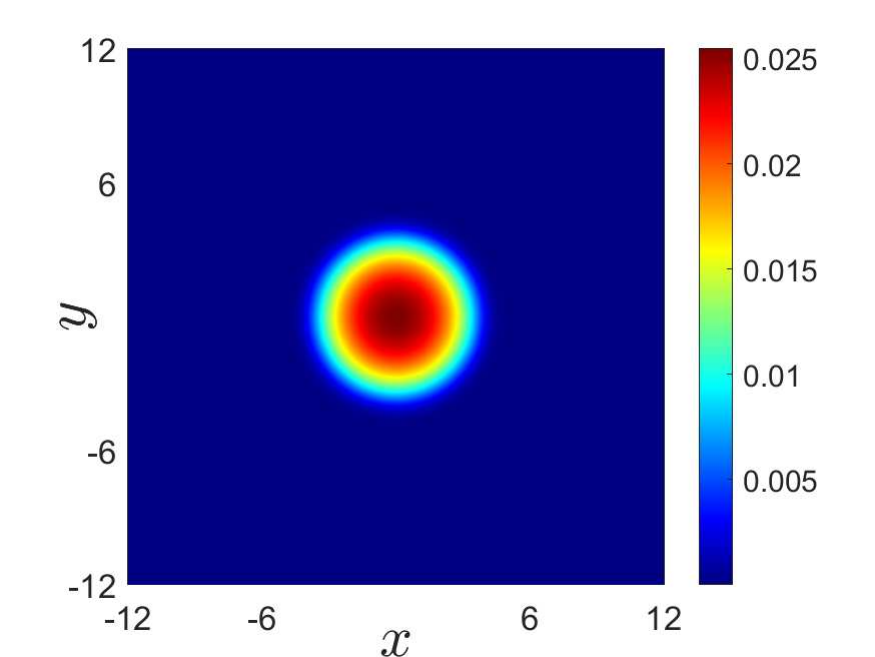}
  \hspace{-0.5cm}
  \includegraphics[scale=0.35]{./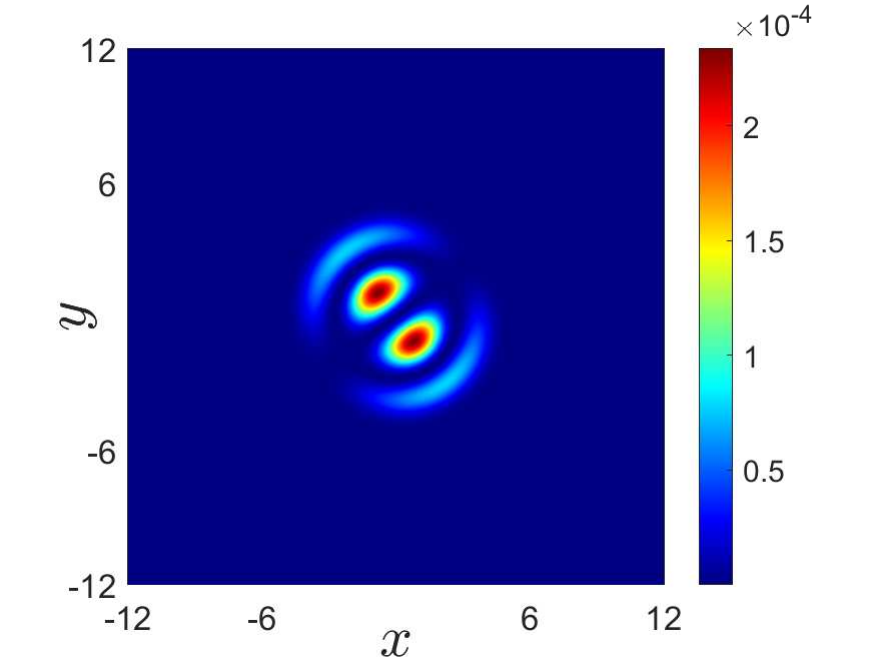}
  \hspace{-0.5cm}
  \includegraphics[scale=0.35]{./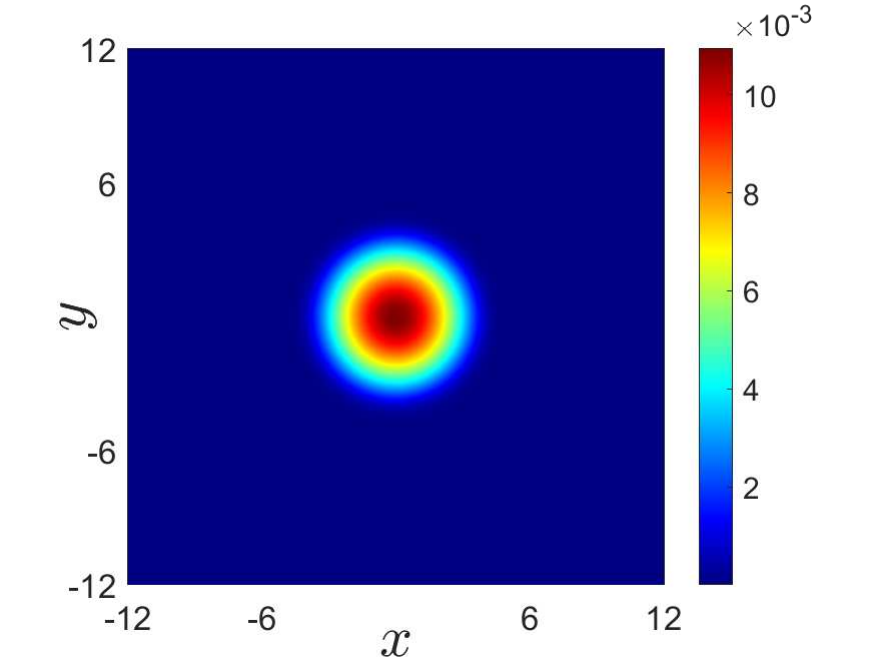}
 \\
  \centering
  \includegraphics[scale=0.35]{./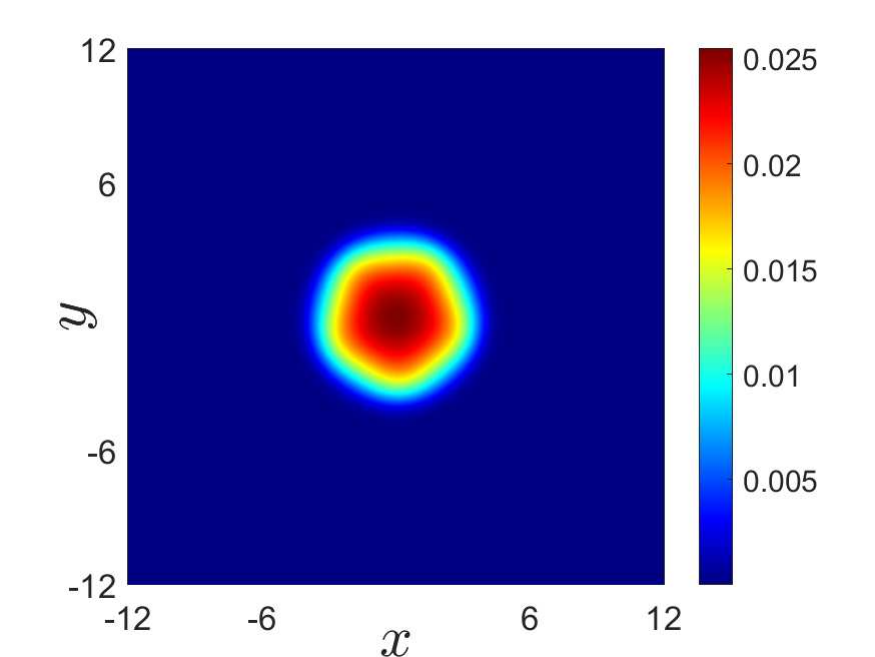}
  \hspace{-0.5cm}
  \includegraphics[scale=0.35]{./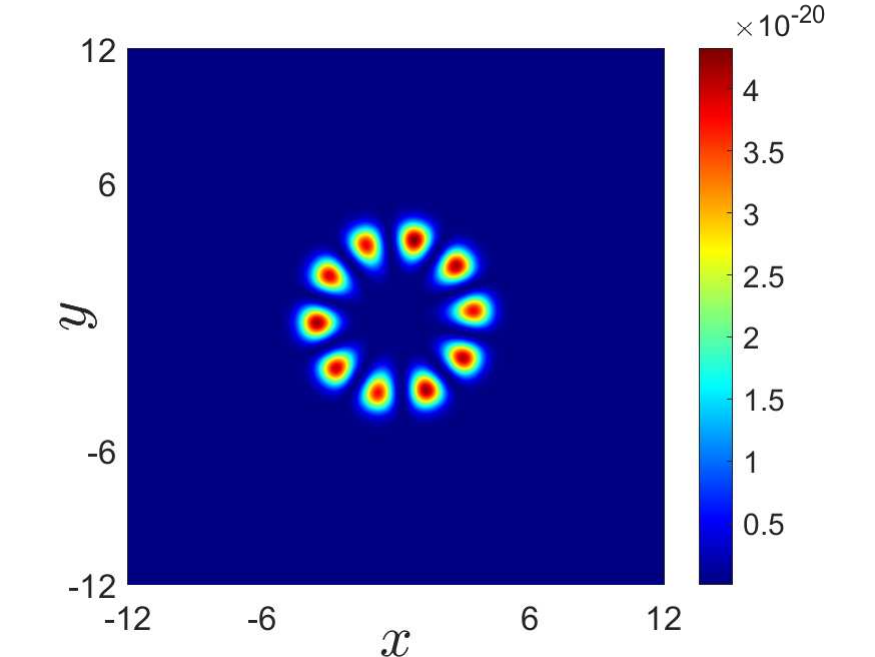}
  \hspace{-0.5cm}
  \includegraphics[scale=0.35]{./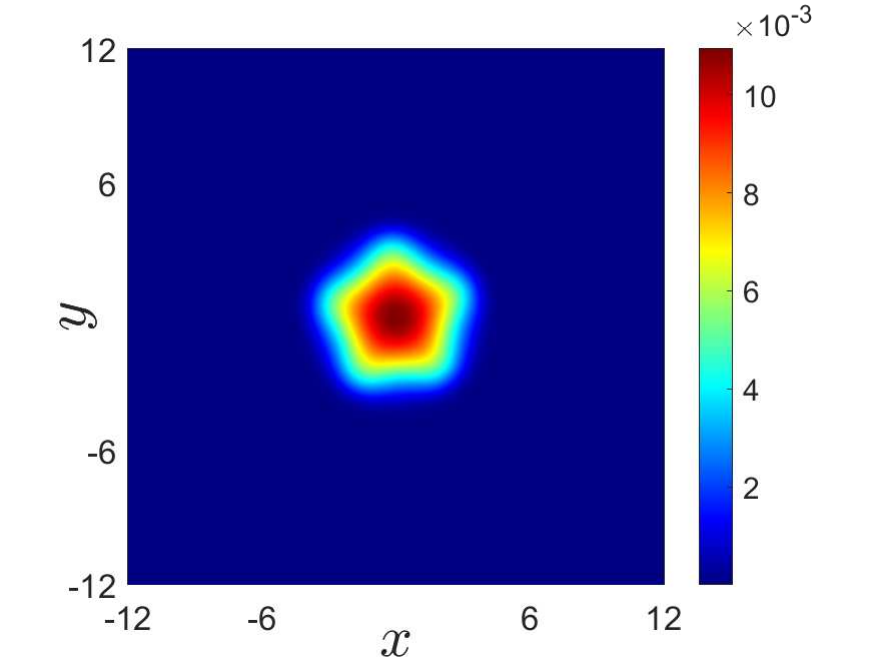}
\caption{Snapshots of $|\phi_j(\bx)|^2$ (top row) and perturbed densities $n_j^\ell(\bx,t=10.6)$
by different excitation modes: $\ell=20,35$ (middle and bottom row) for {\bf Case I} in Example \ref{perturb} ($j=1,0,-1$ from left to right).}
\label{ptb_anti_iso}
\end{figure}

In this section, we apply our solver to investigate the excitation spectrum
and Bogoliubov amplitudes of spin-1 BEC around the ground state.
To visualize a normal mode, similar to \eqref{waveAssump}, we analyze the evolution of the perturbed density profile \cite{YiLowLying18}
\bea
{n}^\ell_j(\bx,t)=\left|\left[\phi_j(\bx)+ \varepsilon\big(u^\ell_j(\bx)
e^{-i\og_\ell t}+\bar{v}_j^\ell(\bx)e^{i\og_\ell t}\big) \right]\right|^2,\ \ j= 0,\pm1,
\eea
which reveals the nature of the excitations with $\ell$ being the eigenpair index.
To this end, we choose a very fine mesh size $h = 1/8$ in both two and three spatial dimensions.

\begin{exam}[\textbf{2D Case}]
\label{perturb}
We investigate the perturbed density by different excitation modes in ferromagnetic and antiferromagnetic condensates in 2D.
To this end, we choose $\varepsilon=0.1$ and consider the following cases
\begin{itemize}
\item[] \hspace{-0.5cm}{\bf Case I.} Isotropic antiferromagnetic condensates:  $\gamma_x = \gamma_y=1$, $\ell=20,35$.
\item[]\hspace{-0.65cm} {\bf Case II.} Anisotropic ferromagnetic condensates:  $\gamma_x = \gamma_y/2 =1$, $\ell=30,55$.
\end{itemize}
\end{exam}
Figure \ref{ptb_anti_iso}-\ref{ptb_ferr_ani} displays the numerical excitations $n_j^\ell(\bx,t)$ of the BdG equations (at $t = 10.6$) that are associated
with the eigenvalues $\omega_\ell$ for {\bf Case I--II}. From
these figures, we can see that both the spinor and external potential affect
the shape of the excitations essentially and significantly.
The eigenmodes $(\bu^\ell, \bv^\ell)$ are symmetric or antisymmetric in a symmetric or antisymmetric external potential, respectively.
Meanwhile,

\begin{figure}[H]
\centering
  \includegraphics[scale=0.35]{./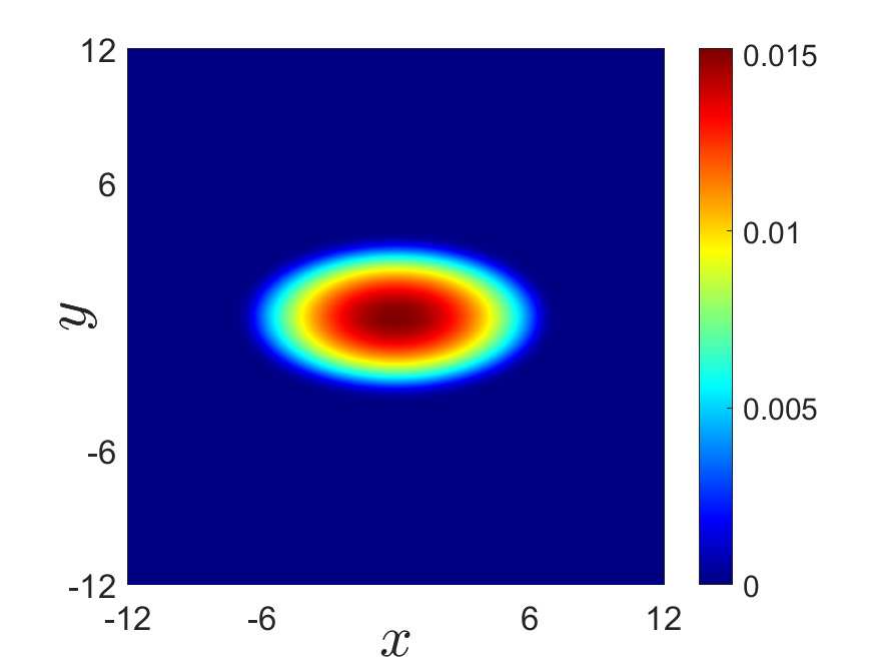}
  \hspace{-0.5cm}
  \includegraphics[scale=0.35]{./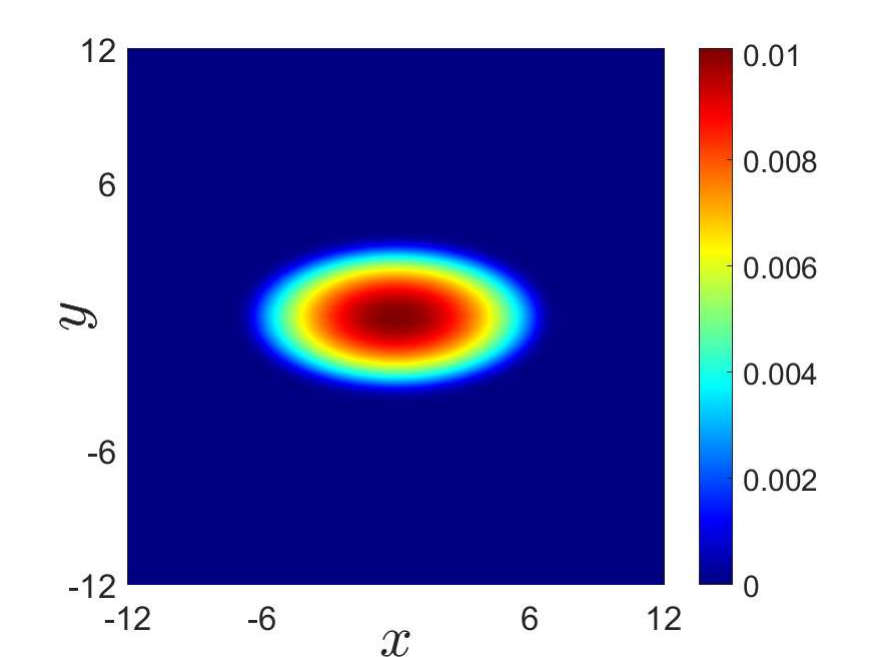}
  \hspace{-0.5cm}
  \includegraphics[scale=0.35]{./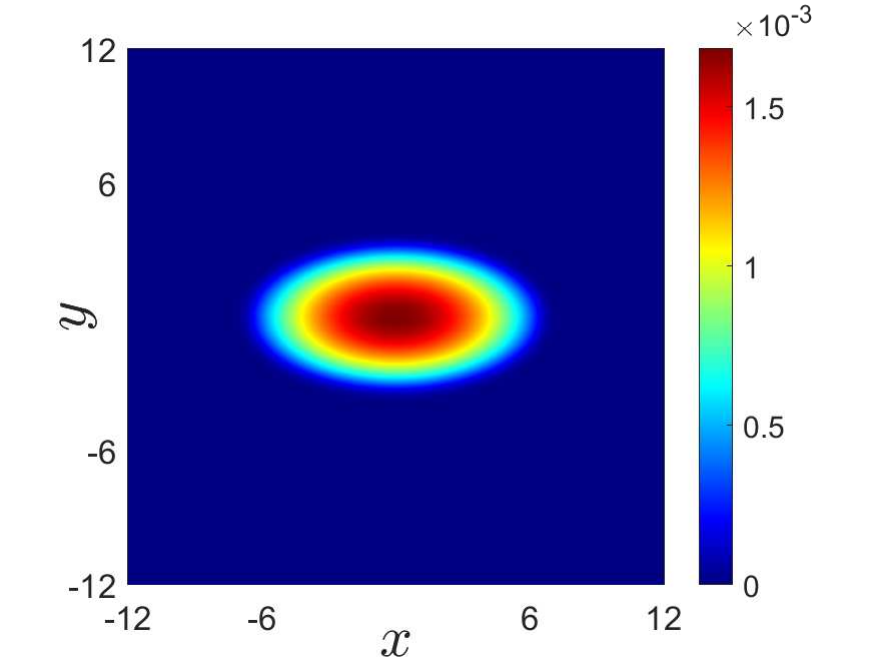}
 \\
  \includegraphics[scale=0.35]{./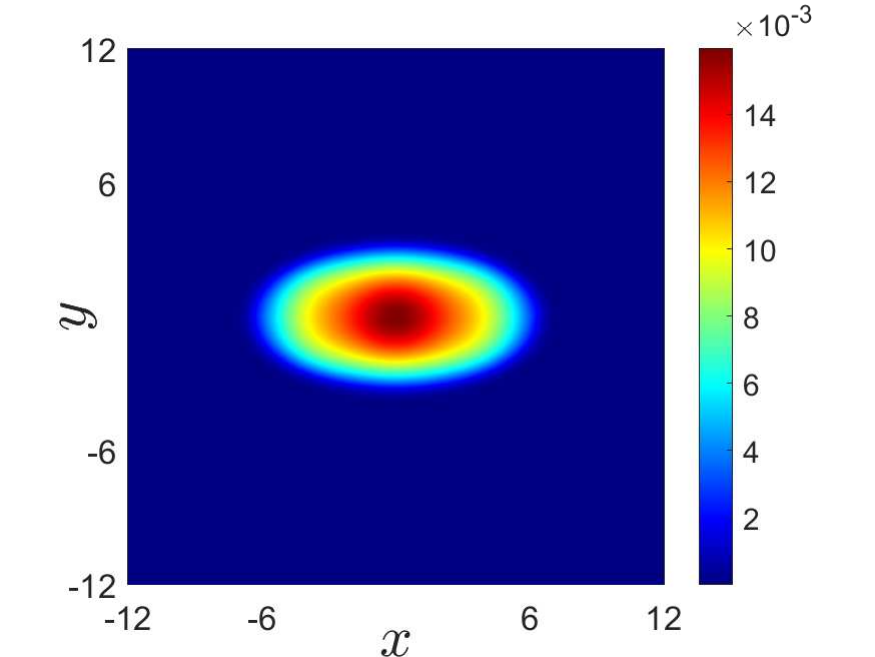}
  \hspace{-0.5cm}
  \includegraphics[scale=0.35]{./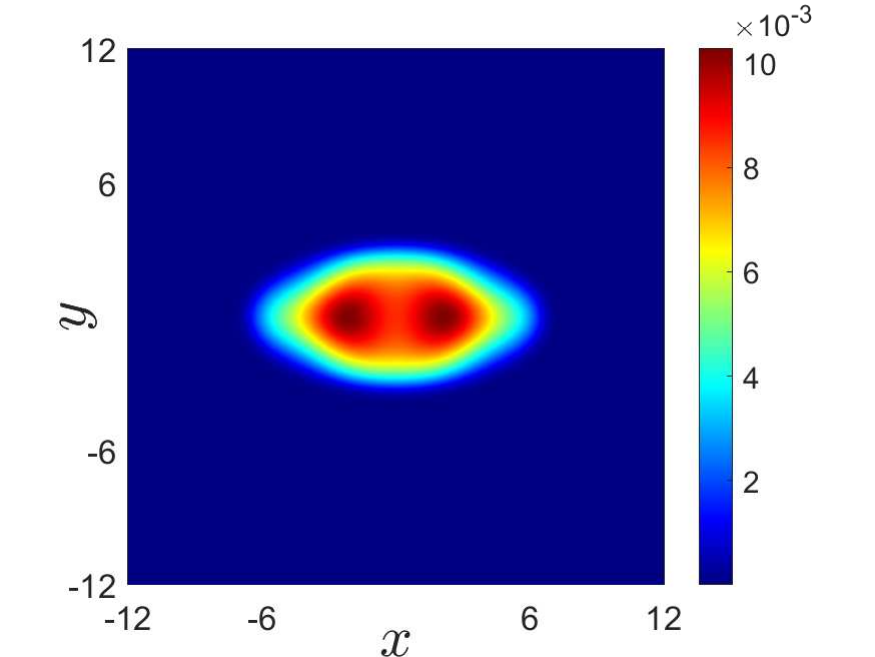}
  \hspace{-0.5cm}
  \includegraphics[scale=0.35]{./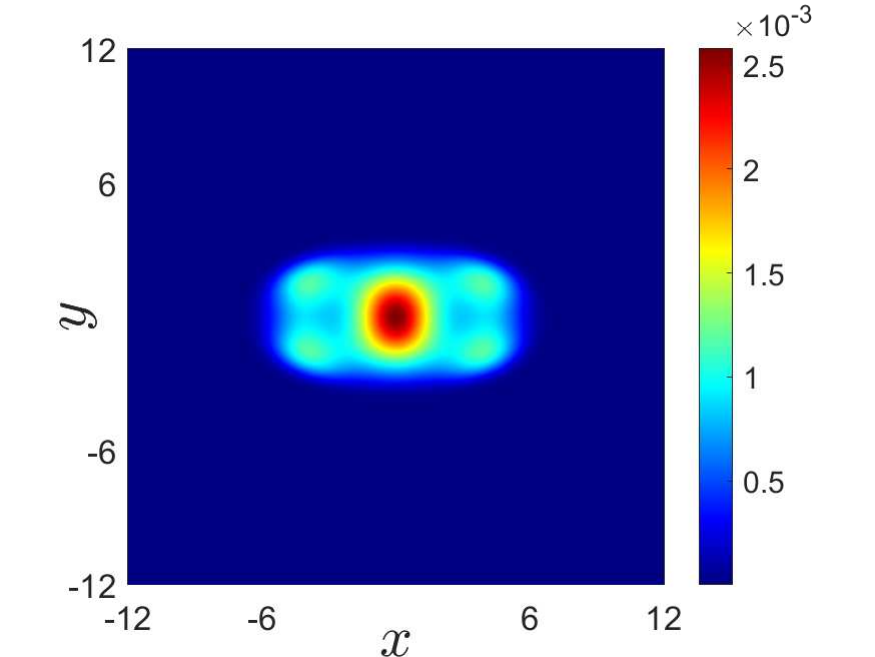}
 \\
  \centering
  \includegraphics[scale=0.35]{./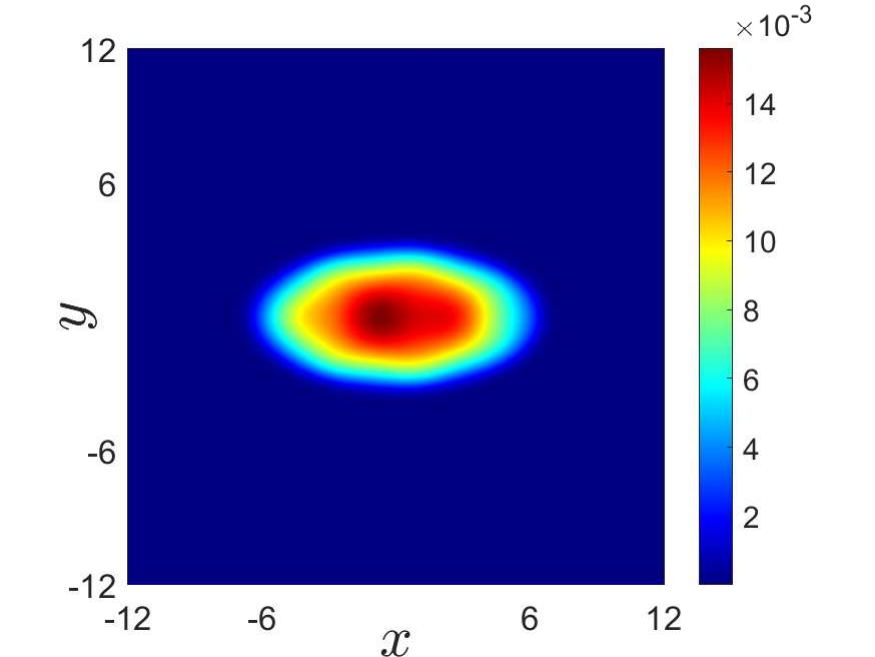}
  \hspace{-0.5cm}
  \includegraphics[scale=0.35]{./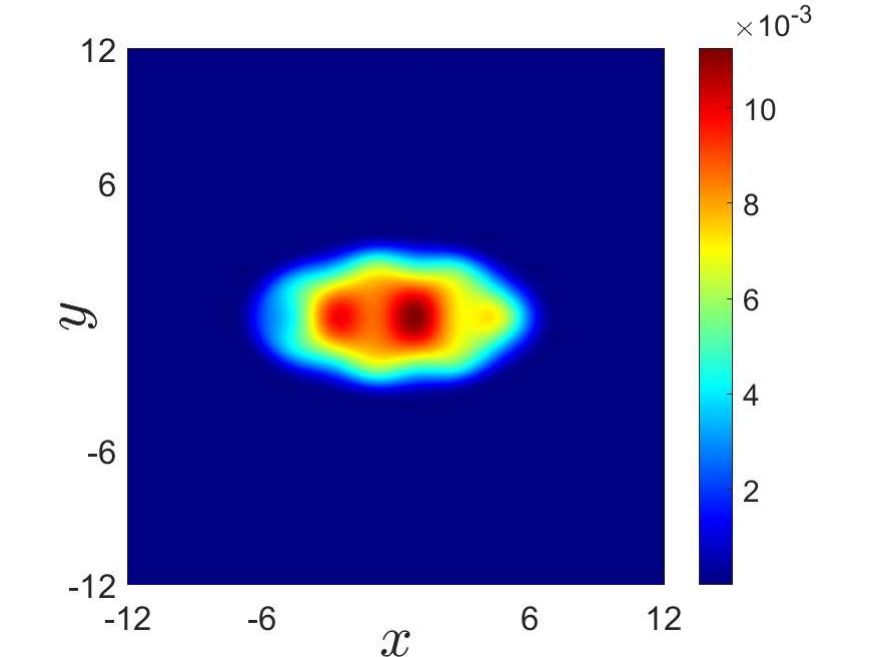}
  \hspace{-0.5cm}
  \includegraphics[scale=0.35]{./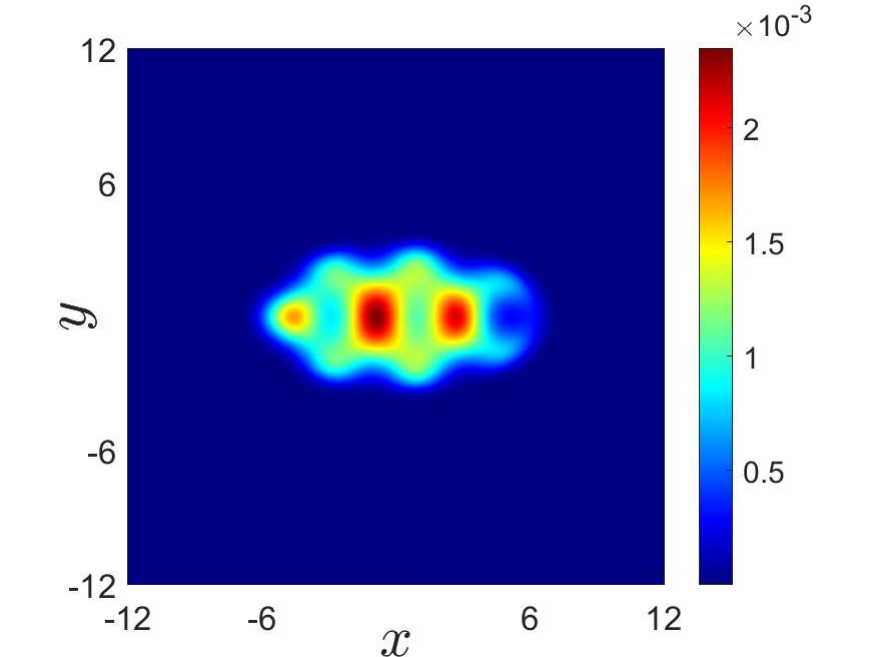}
\caption{Snapshots of $|\phi_j(\bx)|^2$ (top row) and perturbed densities $n_j^\ell(\bx,t=10.6)$
by different excitation modes: $\ell=30,55$ (middle and bottom row) for {\bf Case II} in Example \ref{perturb} ($j=1,0,-1$ from left to right).}
\label{ptb_ferr_ani}
\end{figure}


\noindent  the eigenmodes will be compressed along the direction with a larger trapping frequency.
Indeed, the presence of spinor and anisotropic external potential brings many more rich phase diagrams for eigenmodes of the BdG equations,
which will be detailed in the future.

\begin{exam}[\textbf{3D Case}]
\label{amplitudes}
We study the Bogoliubov amplitudes of the BdG equations in ferromagnetic and antiferromagnetic condensates in 3D.
To this end, we study the following four cases
\begin{itemize}
\item[] \hspace{-0.5cm}{\bf Case I.} Isotropic ferromagnetic condensates:  $\gamma_x = \gamma_y=\gamma_z=1$, $\ell=25$.
\item[] \hspace{-0.5cm}{\bf Case II.} Isotropic antiferromagnetic condensates: $\gamma_x = \gamma_y=\gamma_z=1$, $\ell=26$.
\item[]\hspace{-0.65cm} {\bf Case III.} Anisotropic ferromagnetic condensates: $\gamma_x = \gamma_y =\gamma_z/2=1$, $\ell=62$.
\item[] \hspace{-0.5cm}{\bf Case IV.}  Anisotropic antiferromagnetic condensates: $\gamma_x = \gamma_y=\gamma_z/2=1$, $\ell=42$.
\end{itemize}
\end{exam}
\begin{figure}[H]
\centering
  \includegraphics[scale=0.35]{./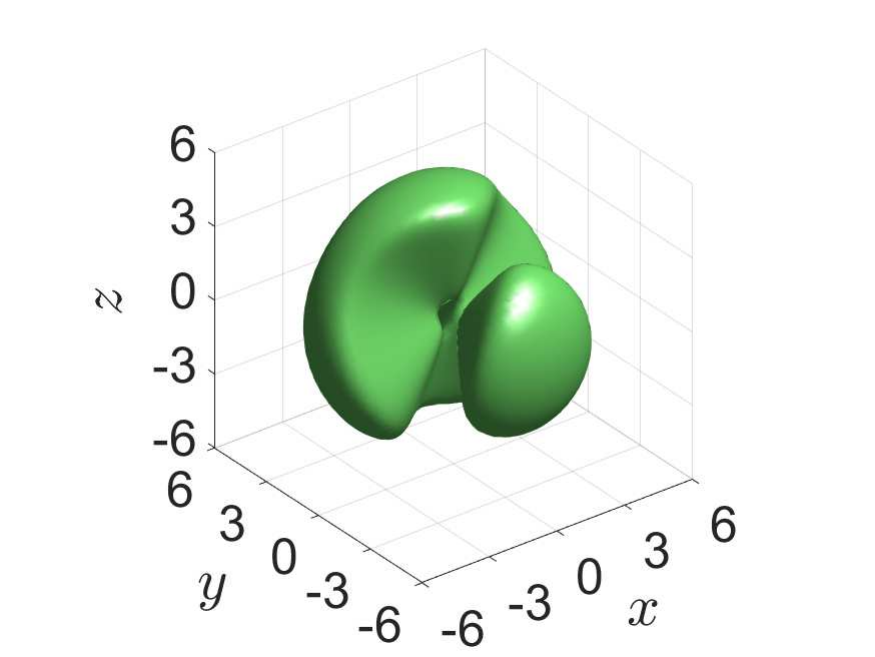}
  \hspace{-0.9cm}
  \includegraphics[scale=0.35]{./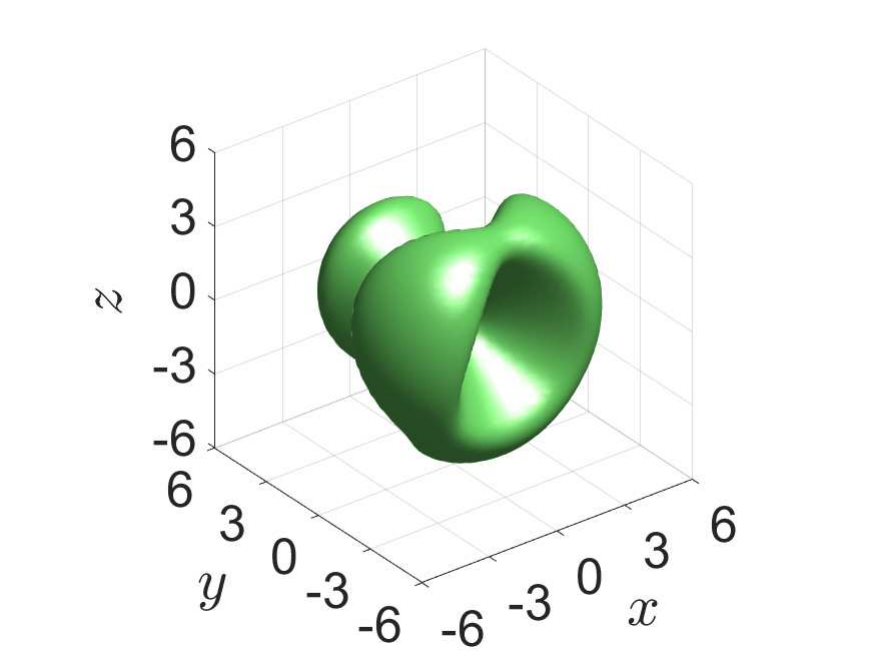}
  \hspace{-0.9cm}
  \includegraphics[scale=0.35]{./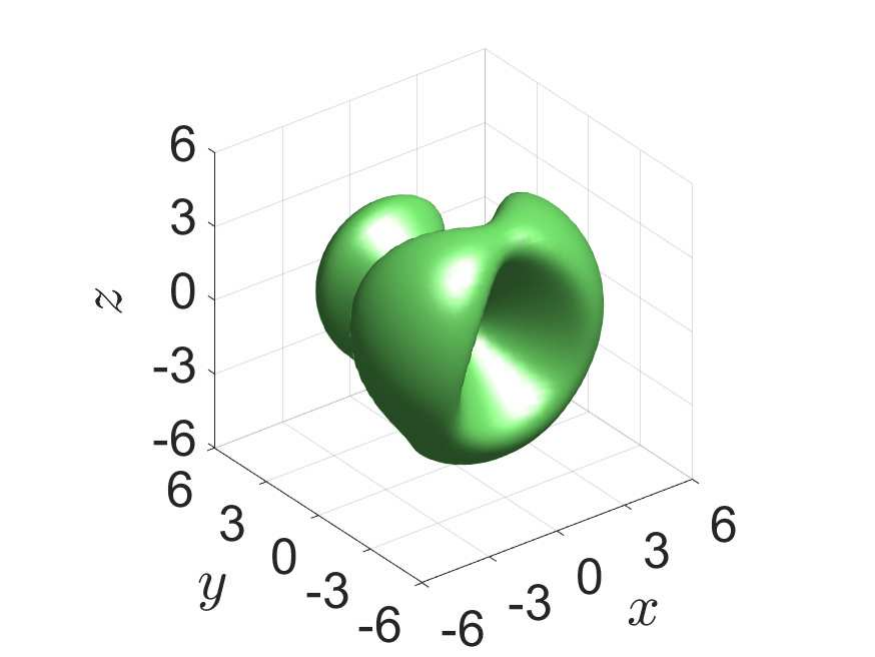}
 \\
  \centering
  \includegraphics[scale=0.35]{./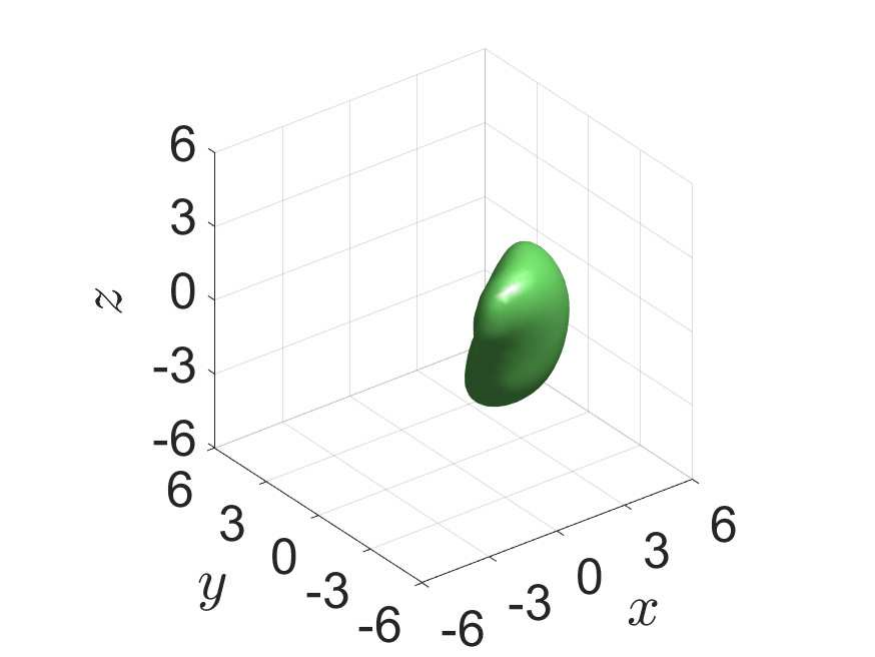}
  \hspace{-0.9cm}
  \includegraphics[scale=0.35]{./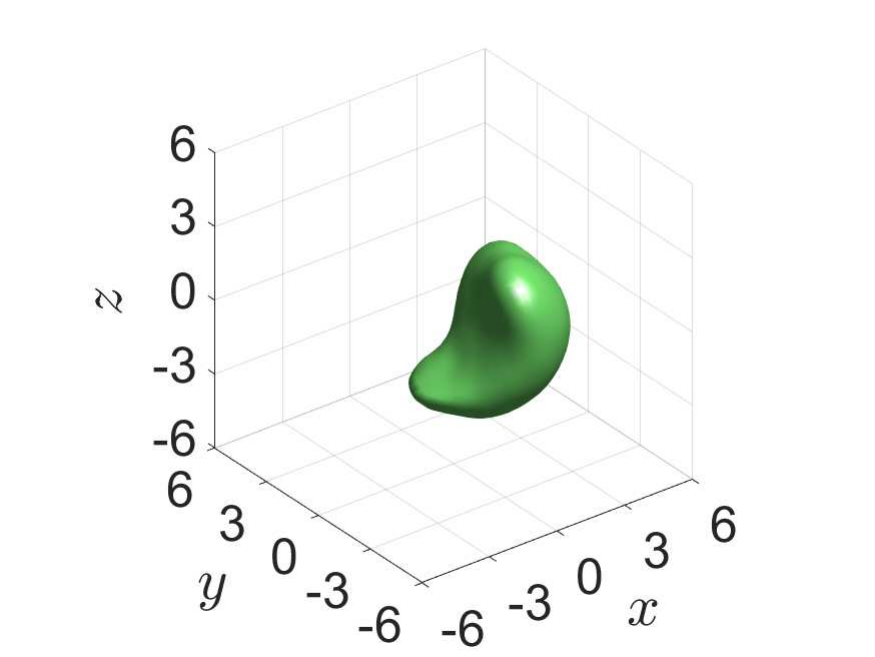}
  \hspace{-0.9cm}
  \includegraphics[scale=0.35]{./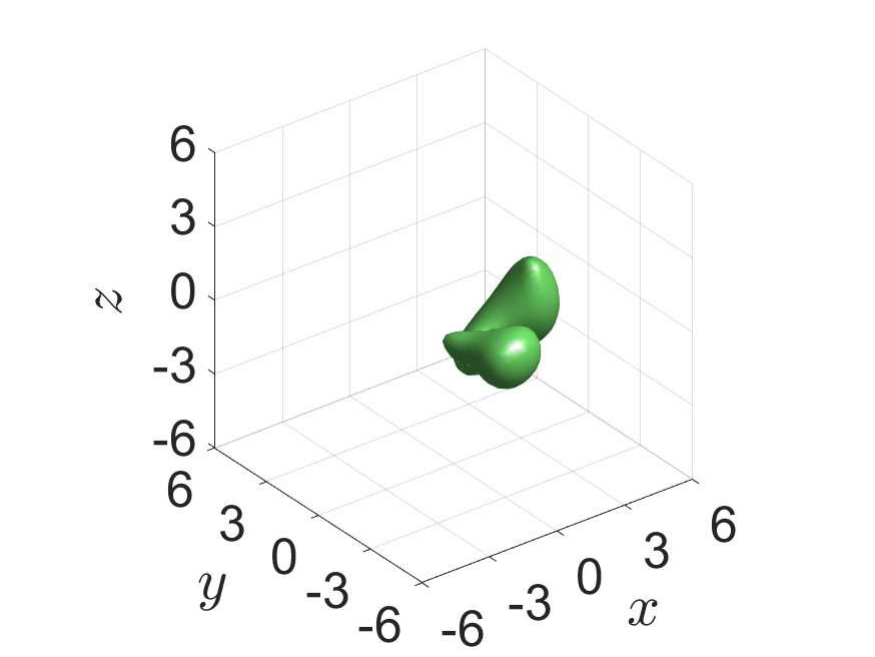}
 \\
  \centering
  \includegraphics[scale=0.35]{./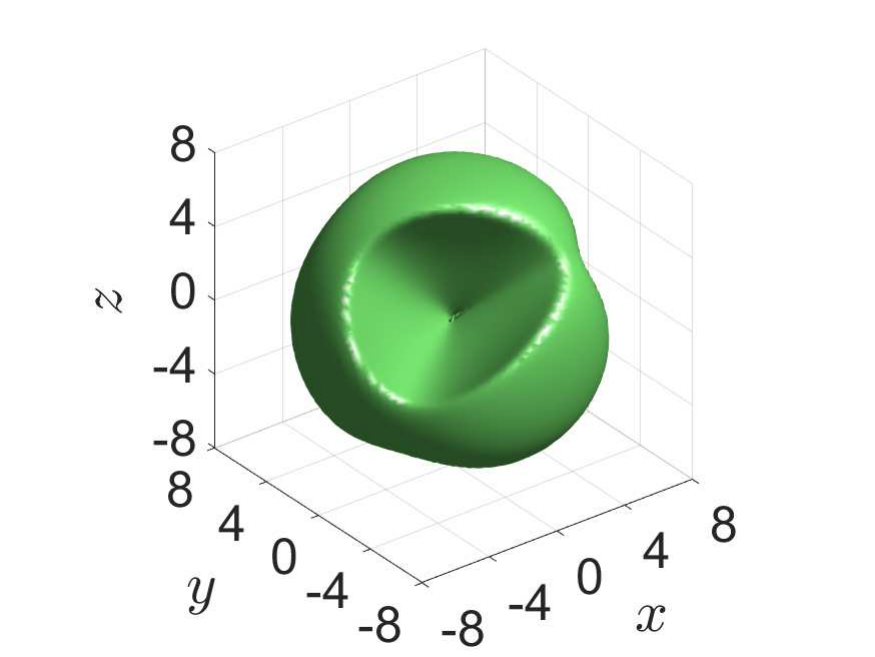}
  \hspace{-0.9cm}
  \includegraphics[scale=0.35]{./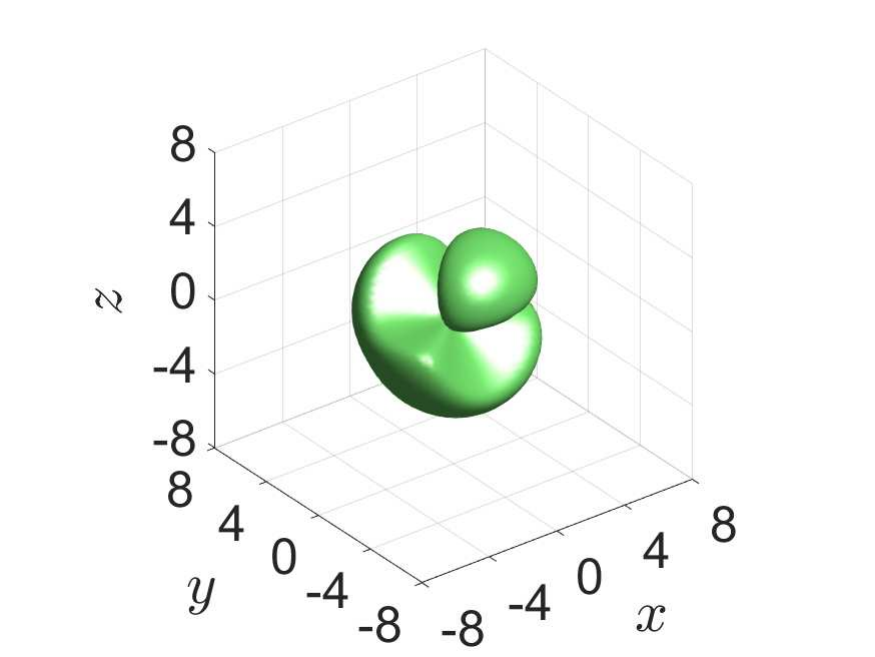}
  \hspace{-0.9cm}
  \includegraphics[scale=0.35]{./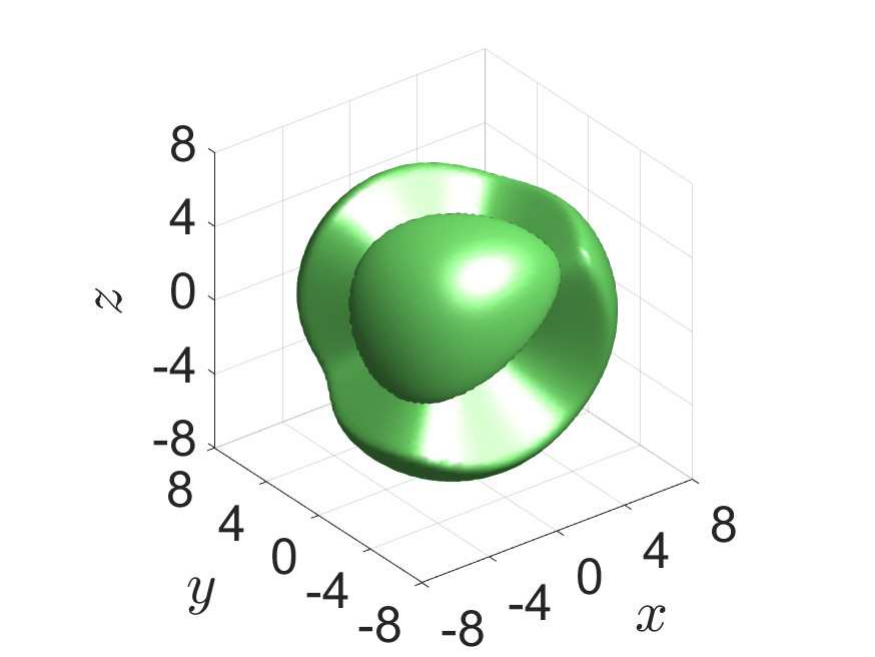}
 \\
  \centering
  \includegraphics[scale=0.35]{./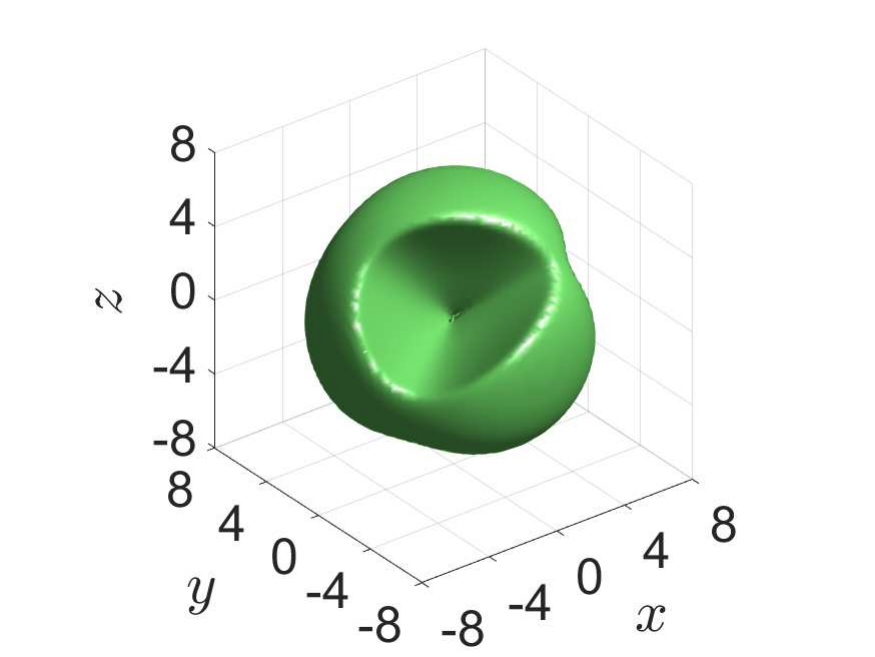}
  \hspace{-0.9cm}
  \includegraphics[scale=0.35]{./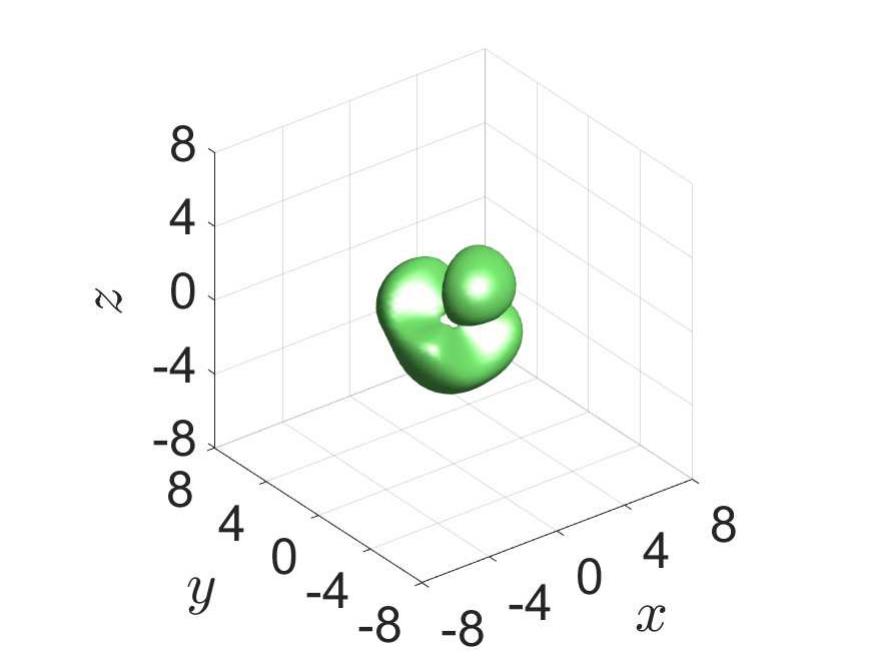}
  \hspace{-0.9cm}
  \includegraphics[scale=0.35]{./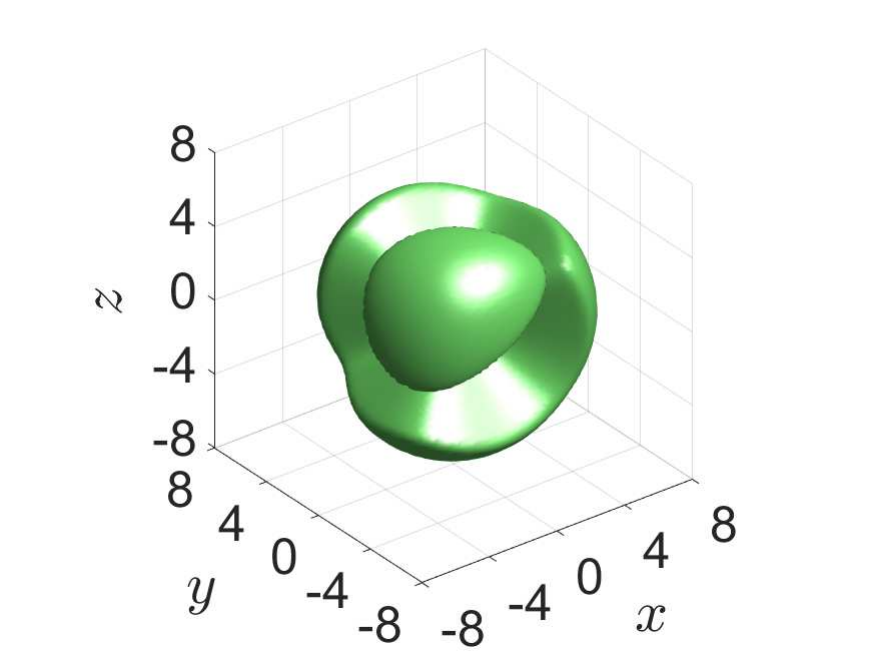}
\caption{
In Example \ref{amplitudes}, isosurface plots of the Bogoliubov amplitudes for {\bf Case I}: $u_j^{25}=10^{-3}$ (1st row), $v_j^{25}=10^{-11}$ (2nd row),
and {\bf Case II}: $u_j^{26}=10^{-10}$ (3rd row), $v_j^{26}=10^{-11}$ (4th row) (from left to right: $j=1,0,-1$).}
\label{eigv_iso}
\end{figure}

Figure \ref{eigv_iso}--\ref{eigv_ani} display isosurface plots of the eigenmodes
$\bu^\ell = (u^\ell_1,u^\ell_0,u^\ell_{-1})^\top$ and $\bv^\ell = (v^\ell_1,v^\ell_0,v^\ell_{-1})^\top$ associated with the different $\ell$ for {\bf Case I--IV}.
From these figures, we can see that the external potential affects the shape of the eigenmodes $\bu^\ell$ and $\bv^\ell$ essentially and significantly.
The eigenmodes can be symmetric or antisymmetric even in a symmetric trapping potential.
Meanwhile, all eigenmodes will be compressed along the direction with a larger trapping frequency.
For the 3D case, the presence of spinor and anisotropic external potential brings in many rich phase diagrams for eigenmodes, and we shall leave them as a future study.

\begin{figure}[H]
\centering
  \includegraphics[scale=0.35]{./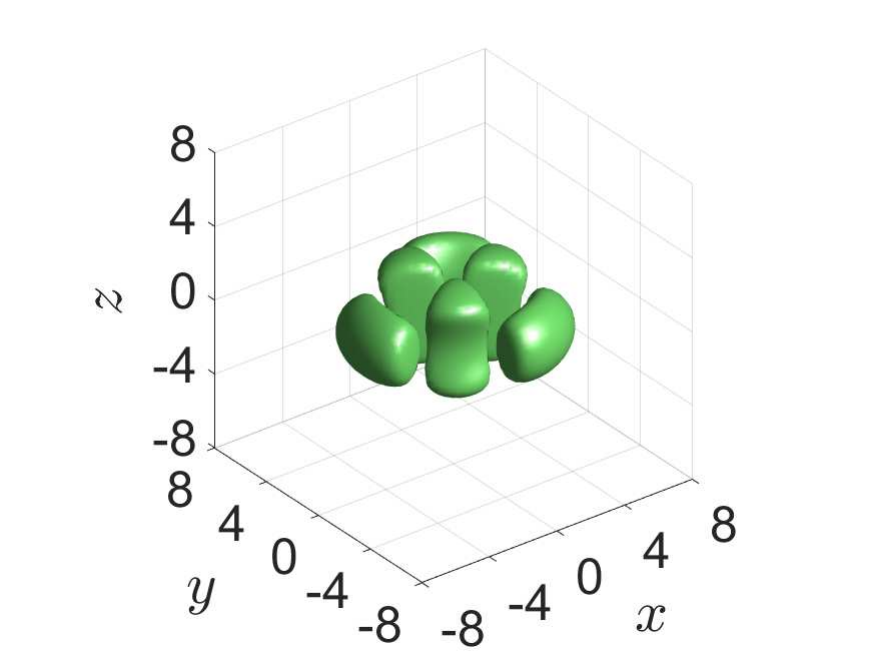}
  \hspace{-0.9cm}
  \includegraphics[scale=0.35]{./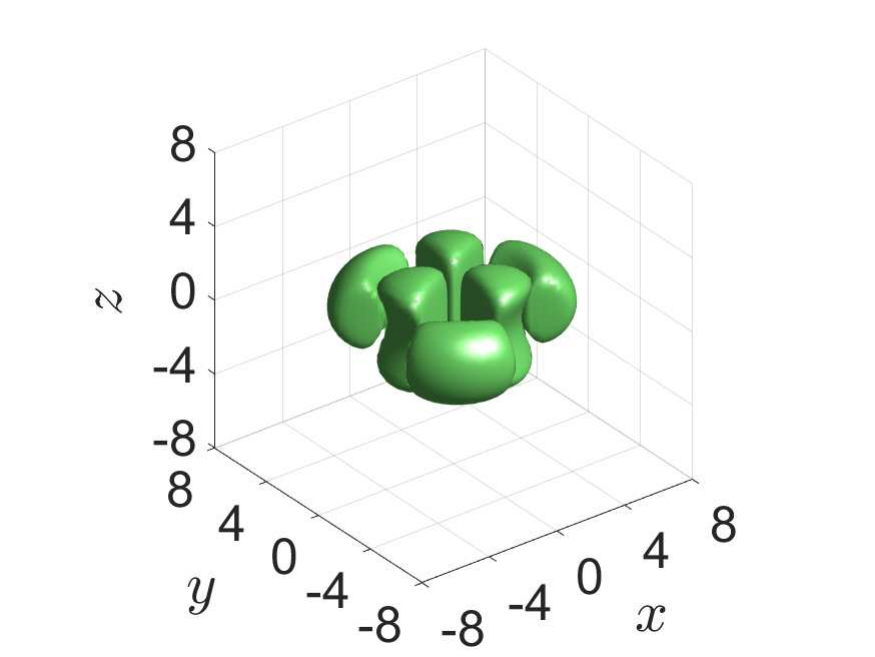}
  \hspace{-0.9cm}
  \includegraphics[scale=0.35]{./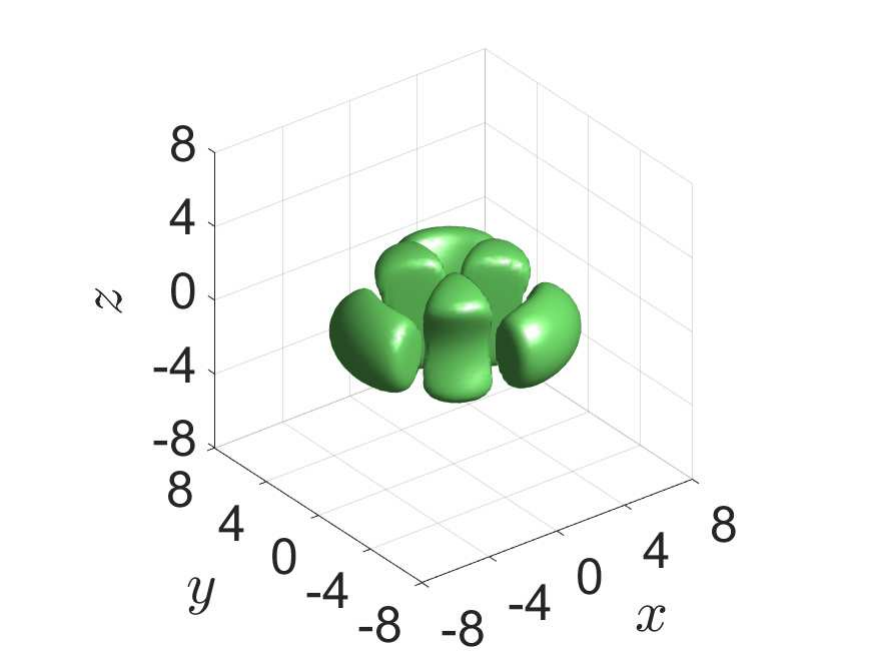}
 \\
  \centering
  \includegraphics[scale=0.35]{./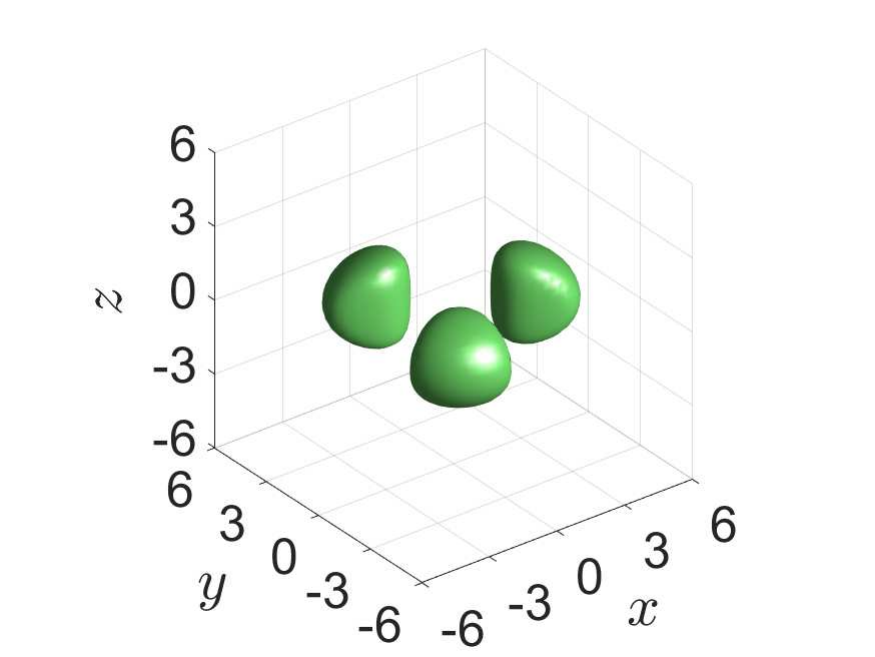}
  \hspace{-0.9cm}
  \includegraphics[scale=0.35]{./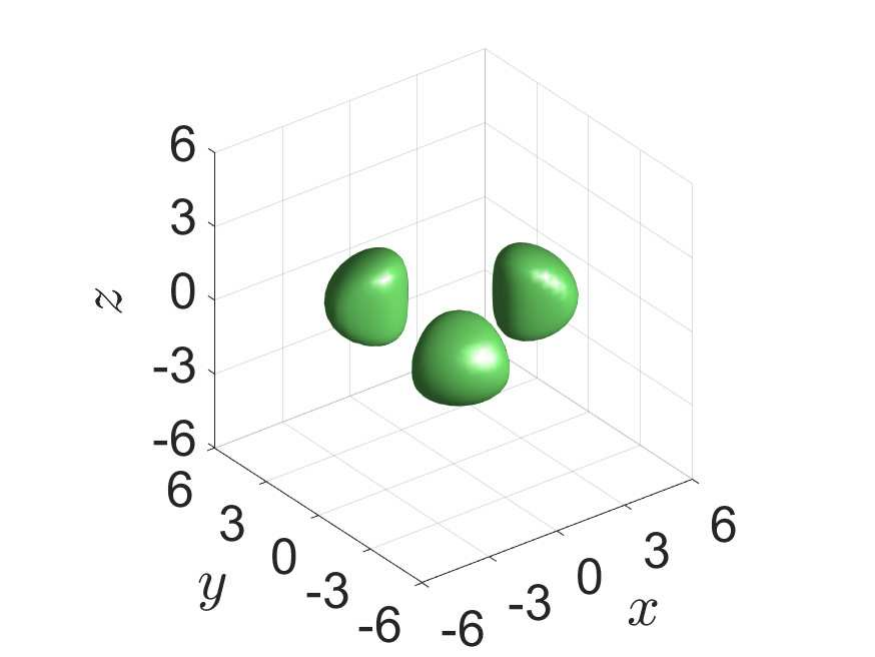}
  \hspace{-0.9cm}
  \includegraphics[scale=0.35]{./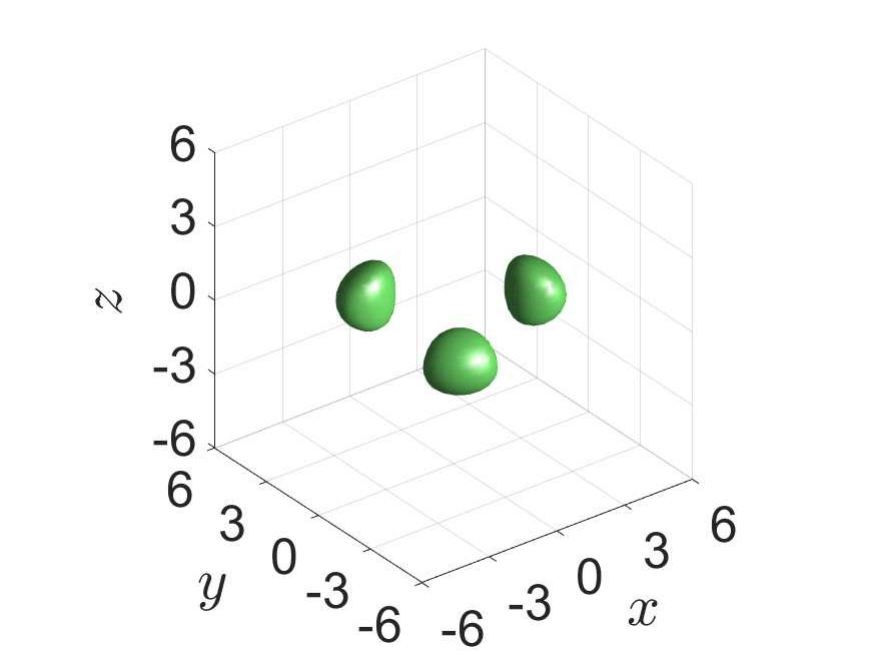}
 \\
  \centering
  \includegraphics[scale=0.35]{./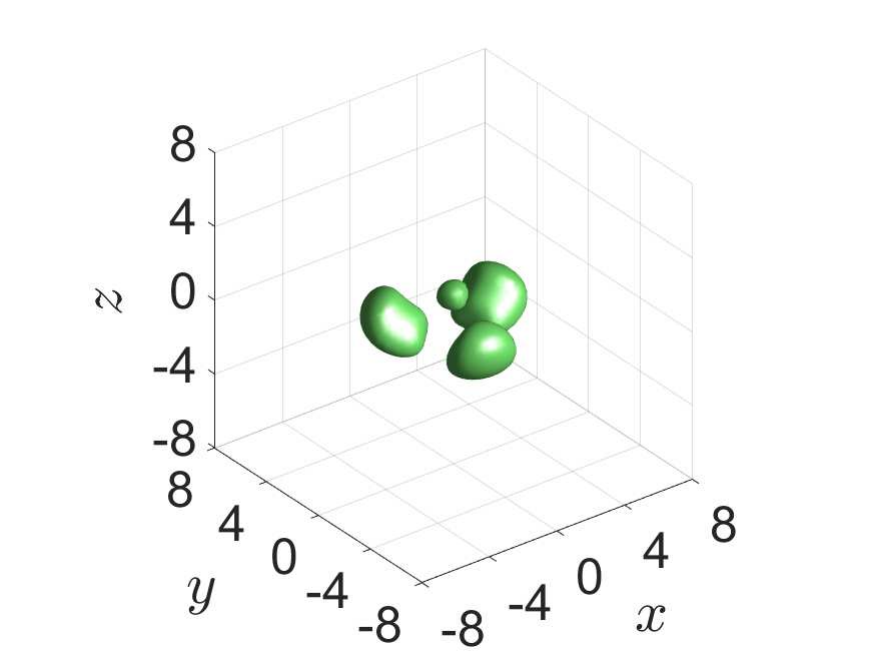}
  \hspace{-0.9cm}
  \includegraphics[scale=0.35]{./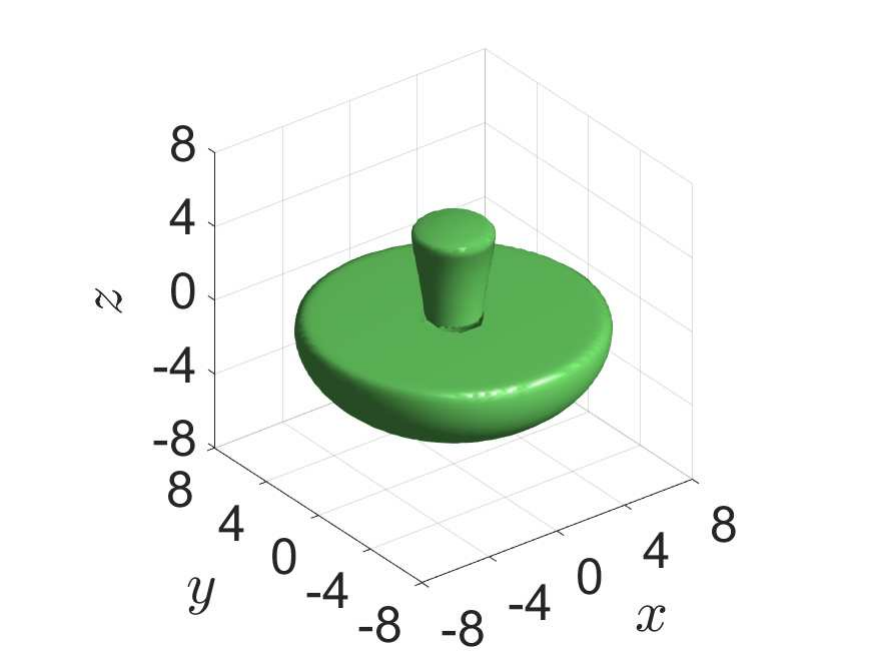}
  \hspace{-0.9cm}
  \includegraphics[scale=0.35]{./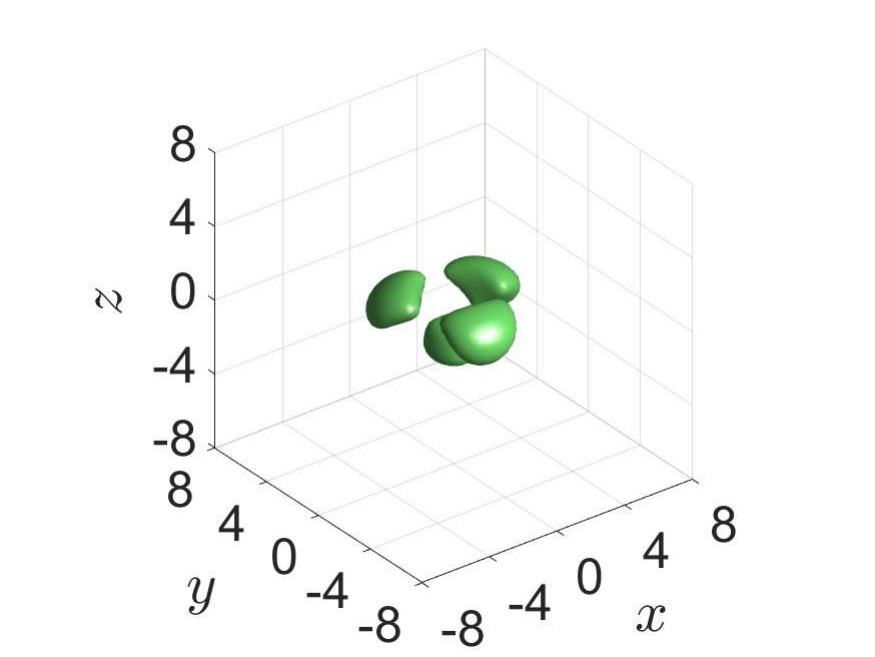}
 \\
  \centering
  \includegraphics[scale=0.35]{./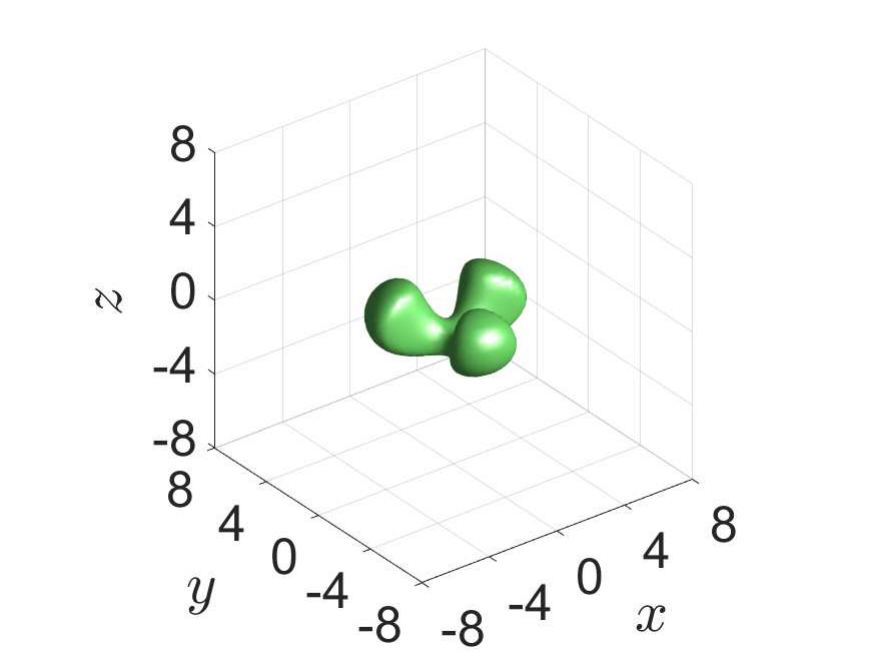}
  \hspace{-0.9cm}
  \includegraphics[scale=0.35]{./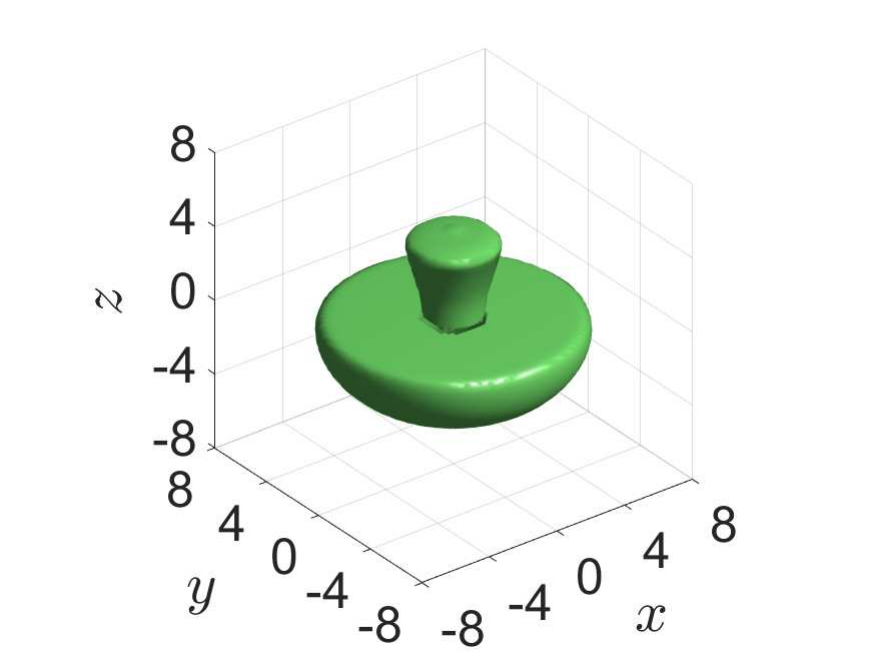}
  \hspace{-0.9cm}
  \includegraphics[scale=0.35]{./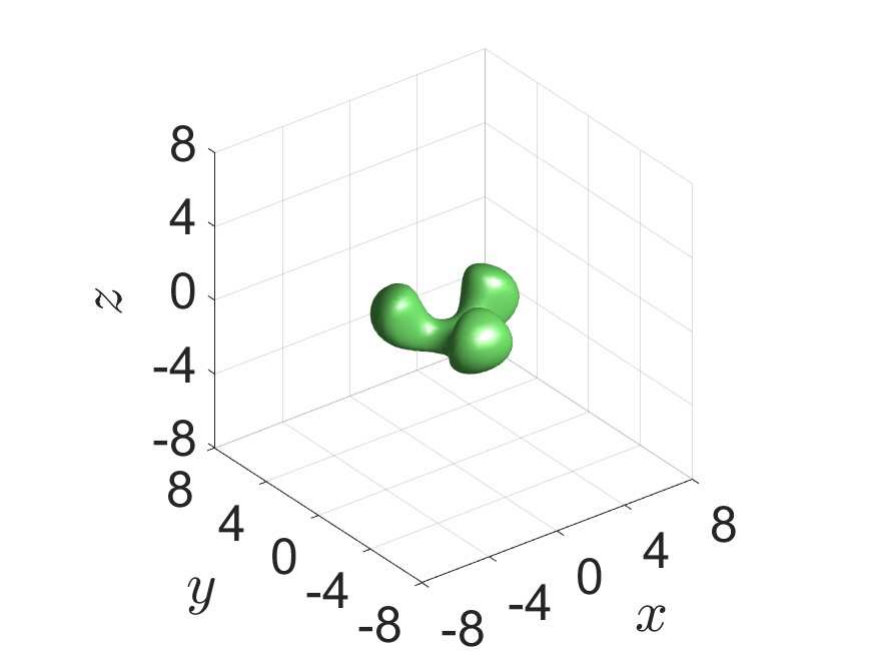}
\caption{
In Example \ref{amplitudes}, isosurface of the Bogoliubov amplitudes for {\bf Case III}: $u_{j}^{62}=10^{-3}$ (1st row), $v_j^{62}=10^{-12}$ (2nd row),
and {\bf Case IV}: $u_{j}^{42}=10^{-9}$ (3rd row), $v_j^{42}=10^{-10}$ (4th row) (from left to right: $j=1,0,-1$).}
\label{eigv_ani}
\end{figure}


\section{Conclusion}
\label{sec:Conclusion}

We proposed an efficient and spectrally accurate solver for
computing the BdG equations of the spin-1 BECs.
We first investigated its analytical eigenpairs, the structure of the generalized nullspace, and the bi-orthogonal property of eigenspaces on a continuous level.
Then, based on a stable Gram-Schmidt bi-orthogonal algorithm, making use of the generalized nullspace and Fourier spectral method, we develop a stable, efficient, and accurate eigensolver. The solver is matrix-free and the most time-consuming matrix-vector product can be accelerated with FFT achieving almost optimal complexity ($O({\tt DOF}\log ({\tt DOF}))$ operations).
Furthermore, we rigorously proved its numerical stability and spectral convergence.
Extensive numerical results confirm the spectral accuracy, efficiency in different dimensions,
and we investigated the excitation spectrum (eigenvalues) and Bogoliubov amplitudes (eigenfunctions) around the ground state with different parameters.
Extensions to BdG excitations of other BECs, such as multi-component BEC, spinor-dipolar BEC, etc, are feasible with minor adaptations on the specific matrix-vector product.

\vspace{-0.1cm}
\section*{Acknowledgements}
Y. Li was partially supported by the National Natural Science Foundations of China (No.12101447, 12271395).
M. Xie was partially supported by the National Natural Science Foundation of China (No. 12001402).
Z. Li and Y. Zhang were partially supported by the National Natural Science Foundation of China (No. 12271400).





\appendix
\section{Supplement to the convergence analysis}\label{appdix}
The coerciveness and conformal finite-dimension subspace approximation of the original BdG problem ( i.e., Eqn. \eqref{coer_BdG} and \eqref{infSupCondVecFun}) are essential properties to establish error estimates, therefore, without loss of generality, we shall consider the following equivalent but much simpler eigenvalue problem: to find $(\lambda,u)\in \mathbb{R}\times H^m_p(\Omega)$ such that
\bea\label{evp}
\mathcal L u:=-\Delta u + \gamma(\bx) u=\lambda\, u,
\eea
with constraint $\|u\|=1$ in one dimension space. Furthermore, we assume the operator $\mathcal L$ is symmetric positive semi-definite and $\gamma(\bx)$ is a smooth function. Extensions to high-dimensional (2D/3D) cases are straightforward and we shall omit them for brevity.

Define the following inner products:
\begin{align*}
&a(u,v) = \lag \nabla u, {\nabla v}\rag + \lag\gamma u, v\rag, \quad b(u,v) = \lag u, v \rag,\ \ \ \forall \, u,v \in H^m_p(\Omega)\\
&\lag u,v \rag = \int_\Omega u \bar v \dif {x}\ \ \mbox{and}\ \ \lag \nabla u ,{\nabla v}\rag = \int_\Omega  \nabla u \cdot\overline{\nabla v} \dif{x}.
\end{align*}
It is easy to prove that $a(\cdot,\cdot)$ is an inner product and coercive in $H^m_p(\Omega)$, that is,
\begin{equation*}
a(v,v) \leq C_a \|v\|_1 ^2,\ \ \forall\, v \in H^m_p(\Omega),
\end{equation*}
where $C_a$ is a constant.
In cases where $\ns(\mathcal L)$ is non-empty, we define $V := \ns(\mathcal L)^\perp = \{v\in H^m_p(\Omega)\, |\,  \lag v, w\rag = 0, \ w\in \ns(\mathcal L)\}$ as
the orthogonal complementary subspace, and the coerciveness ($V$-ellipticity) is satisfied as
\begin{equation}\label{a_coer}
c_a \|v\|_1 ^2 \leq a(v,v),\ \ \forall\, v \in V.
\end{equation}
In this appendix, we are concerned only with nonzero eigenvalues and their corresponding eigenfunctions, that is, solving the following weak form of Eqn. \eqref{evp}: to find $(\lambda,u) \in \mathbb{R}\times V$ such that
\bea\label{wf-evp}
a(u,v) = \lambda\, b(u,v), \ \ \ \forall\, v\in {V},
\eea
subject to constraint $b(u,u)=1$.

From Lemma \ref{FouApprox}, we could define the approximation finite dimensional spaces:
$$ {V}_N:= X_N\cap V\subset V.$$
The spectral-collocation method for \eqref{evp} amounts to finding $(\lambda_N,u_N) \in \mathbb{R}\times V_N$ such that
\begin{eqnarray}\label{discrte_form}
-\Delta u_N({x}_n) + \gamma({x}_n)~u_N({x}_n)  = \lambda_N\, u_N({x}_n), \quad  0\leq n \leq N-1,
\end{eqnarray}
with constraint $b(u_N,u_N)=1$ and $x_n = -L+nh_x \in \mathcal{T}_{\bx}$.
For convergence analysis, it is necessary to consider the following approximation problem: find $(\lambda_N,u_N) \in \mathbb{R}\times {V}_N$ such that
\bea\label{wf-a}
a_N(u_N,v_N) = \lambda_N\, b_N(u_N,v_N), \ \ \ \forall\, v_N\in {V}_N,
\eea
 subject to constraint  $b_N(u_N,u_N)=1$ with
\begin{align*}
&a_N(u_N,v_N) := \lag\nabla u_N, \nabla v_N\rag_N + \lag\gamma\, u_N, v_N \rag_N,\quad
b_N(u_N,v_N) := \lag u_N, v_N\rag_N,\\
&\lag u,v\rag_N := \frac{2L}{N}\sum\limits_{n=0}^{N-1}u_N({x}_n) \overline{v_N}({x}_n), \ \ \mbox{and}\  \lag \nabla u_N,\nabla v_N\rag_N := \frac{2L}{N}\sum\limits_{n=0}^{N-1}\nabla u_N({x}_n)\cdot \overline{\nabla v_N}({x}_n).
\end{align*}

\begin{lemma}\label{lem:equivalent}
The discrete problem \eqref{discrte_form} and discrete variational problem \eqref{wf-a} are equivalent.
\end{lemma}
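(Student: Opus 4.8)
The plan is to collapse both formulations onto one statement about a single residual function lying in $X_N$, using two elementary facts about the uniform grid: the unisolvence of trigonometric interpolation, and the exactness of the $N$-point trapezoidal rule on the products that occur. I would first record these facts. (i) Point evaluation at the $N$ nodes $x_0,\dots,x_{N-1}$ is a linear isomorphism $X_N\to\mathbb C^N$; consequently there is a cardinal basis $\{c_j\}_{j=0}^{N-1}\subset X_N$ with $c_j(x_n)=\delta_{jn}$, and any element of $X_N$ that vanishes at every node vanishes identically. (ii) Since $\partial_x$ maps $X_N$ into itself and a product of two functions in $X_N$ carries only Fourier modes $\mu_k$ with $|k|\le N-1$ — none of which is aliased by the $N$-point rule — one has $\lag p,q\rag_N=\lag p,q\rag$ for all $p,q\in X_N$, and, by periodicity (no boundary terms) together with $-\Delta p\in X_N$, also $\lag\nabla p,\nabla q\rag_N=\lag\nabla p,\nabla q\rag=\lag-\Delta p,q\rag=\lag-\Delta p,q\rag_N$.

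Next I would rewrite the discrete forms. The discrete pairing depends only on nodal values and $(\cI(\gamma u_N))(x_n)=\gamma(x_n)u_N(x_n)$, so $\lag\gamma u_N,v_N\rag_N=\lag\cI(\gamma u_N),v_N\rag_N$; combining this with (ii) gives, for $u_N,v_N\in X_N$,
\[
a_N(u_N,v_N)-\lambda_N\,b_N(u_N,v_N)=\lag r_N,v_N\rag_N,\qquad r_N:=-\Delta u_N+\cI(\gamma u_N)-\lambda_N u_N\in X_N .
\]
Thus the variational problem \eqref{wf-a} says exactly that $\lag r_N,v_N\rag_N=0$ for all admissible $v_N$, while the collocation problem \eqref{discrte_form} says exactly that $r_N(x_n)=0$ for $0\le n\le N-1$ (again using $(\cI(\gamma u_N))(x_n)=\gamma(x_n)u_N(x_n)$), and both carry the identical side conditions $u_N\in V_N$, $b_N(u_N,u_N)=1$.

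The two implications then follow from (i). If $(\lambda_N,u_N)$ solves \eqref{discrte_form}, then $r_N\in X_N$ vanishes at every node, hence $r_N\equiv 0$, so $\lag r_N,v_N\rag_N=0$ identically and \eqref{wf-a} holds; conversely, testing \eqref{wf-a} against the cardinal functions $v_N=c_j$ returns $\tfrac{2L}{N}\,r_N(x_j)=0$ for each $j$, i.e. \eqref{discrte_form}. The normalization constraints match trivially since $b_N$ is defined identically in both problems.

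I expect the genuine work to be fact (ii) — verifying that the trapezoidal quadrature is exact on $X_N\cdot X_N$ and on $\nabla X_N\cdot\nabla X_N$ (equivalently, that the aliasing error vanishes there) and that $-\Delta$ and $\partial_x$ preserve $X_N$ — since this is what makes $a_N(u_N,\cdot)-\lambda_N b_N(u_N,\cdot)$ collapse onto the single nodal residual $r_N$. The one point needing extra care, when $\ns(\mathcal L)\neq\{0\}$, is that the test space is the proper subspace $V_N=X_N\cap V$ rather than all of $X_N$: one must then supply an additional argument that the conditions discarded in passing from $X_N$ to $V_N$ are already forced by the shared constraint $u_N\in V_N$ and the $\lag\cdot,\cdot\rag_N$-self-adjointness of $-\Delta+\cI(\gamma\,\cdot)$ on $X_N$, so that $\lag r_N,v_N\rag_N=0$ on $V_N$ still pins down $r_N\equiv 0$; in the positive-definite case $V_N=X_N$ and this is automatic.
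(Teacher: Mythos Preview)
Your argument is correct and rests on the same core fact the paper uses: the discrete pairing $\lag\cdot,\cdot\rag_N$ agrees with the continuous one on $X_N$, so that $\lag\nabla u_N,\nabla v_N\rag_N=\lag-\Delta u_N,v_N\rag_N$. The packaging differs: the paper does not introduce a residual or the cardinal basis, but instead multiplies the collocation equations \eqref{discrte_form} directly by $\overline{W_j}(x_n)$, sums over $n$ to obtain $\lag-\Delta u_N,W_j\rag_N+\lag\gamma u_N,W_j\rag_N=\lambda_N\lag u_N,W_j\rag_N$, and then checks $\lag\nabla u_N,\nabla W_j\rag_N=\lag-\Delta u_N,W_j\rag_N$ by an explicit Fourier-coefficient computation. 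Your residual-plus-cardinal-basis framing is a bit more systematic and makes both implications explicit; the paper only spells out the collocation $\Rightarrow$ variational direction and does not address the $V_N\subsetneq X_N$ issue you flag at the end.
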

\begin{proof}
 Multiplying both sides of \eqref{discrte_form} by $\overline{W_j}(x_\ell) = e^{-\im \mu_j (x_\ell+L)}$ and adding up $\ell$ from $1$ to $N-1$, we have
\beas
\lag-\Delta u_N, {W}_j\rag_N + \lag \gamma \, u_N, {W_j}\rag_N= \lambda_N \lag u_N,{W_j}\rag_N.
\eeas
Setting $u_N({x})= \sum\nolimits_{j=-N/2}^{N/2-1}~c_j W_j({x})$, by the orthogonality of basis $W_j(x)$, we have \beas
\lag\nabla u_N,\nabla W_j\rag_N = \sum\nolimits_{k=-N/2}^{N/2-1} c_k \mu_k\mu_j \lag W_k,W_j\rag_N
=  c_j \mu_j^2 = \lag -\Delta u_N,{W_j}\rag_N.
\eeas
The above identity holds true for the basis of $V_N$, therefore, the equivalence proof is completed.
\end{proof}

Before introducing the convergence results, we define the following notation
\begin{align*}
\delta_N(u)  & = \inf_{v\in V_N}\bigg\{\|u-v\|_1 + \sup_{w\in V_N}\frac{|a(v,w)-a_N(v,w)|}{\|w\|_1}\bigg\}.
\end{align*}

\begin{lemma}\label{lem_convergence}
Assume the $u({x})$ is a smooth and compactly supported function, i.e., $\supp\{u\} \subsetneq \Omega$, then the following estimate holds:
\begin{align}
\delta_N(u)\lesssim N^{-(m-\sigma)}\max\{|u|_m, |\gamma u|_m\},
\end{align}
where $\sigma = \max\{1,d/2\}$.
\end{lemma}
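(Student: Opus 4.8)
The plan is to bound the two terms in $\delta_N(u)$ separately, using the Fourier pseudo-spectral approximation estimates of Lemma~\ref{FouApprox} together with the quadrature error for the discrete inner products. First I would choose the natural candidate $v = \cI u \in V_N$ (one must check $\cI u \in V$; since $\supp\{u\}\subsetneq\Omega$ and the nullspace functions are smooth, the orthogonality $\lag \cI u, w\rag$ can be handled, or one composes $\cI u$ with the $L^2$-projection onto $V$, which only improves the estimate). For the approximation term, Lemma~\ref{FouApprox} with $|\bm\alpha|=1$ gives
\[
\|u-\cI u\|_1 \lesssim \|u-\cI u\| + |u-\cI u|_1 \lesssim N^{-(m-1)}|u|_m + N^{-(m-1)}|u|_m,
\]
and since $m-1 \ge m-\sigma$ when $\sigma = \max\{1,d/2\}\ge 1$, this is $\lesssim N^{-(m-\sigma)}|u|_m$. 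In dimension $d\le 2$ one gets exactly the exponent $m-1$; for $d=3$ one must be slightly more careful, but $\sigma=d/2$ is built into the statement precisely to absorb this.

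The second, genuinely substantive term is the consistency error $\sup_{w\in V_N}|a(v,w)-a_N(v,w)|/\|w\|_1$ with $v=\cI u$. Here $a-a_N$ is the quadrature error of the trapezoidal/pseudo-spectral rule applied to $\nabla(\cI u)\cdot\overline{\nabla w} + \gamma\,(\cI u)\,\overline{w}$. The key fact is that for trigonometric polynomials of degree $<N/2$ the quadrature rule is \emph{exact}; hence $\lag\nabla(\cI u),\nabla w\rag_N = \lag\nabla(\cI u),\nabla w\rag$ contributes nothing, and the entire error comes from the term $\lag\gamma\,\cI u, w\rag_N - \lag\gamma\,\cI u,w\rag$, because $\gamma\,\cI u$ is not a trigonometric polynomial. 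I would write this quadrature error as $\lag\gamma\,\cI u - \cI(\gamma\,\cI u),\, w\rag_N$ (using exactness on $\cI(\cdot)$ against $w$) plus a term that vanishes by exactness, and then bound $\|\gamma\,\cI u - \cI(\gamma\,\cI u)\|$. Splitting $\gamma\,\cI u - \cI(\gamma\,\cI u) = (\gamma u - \cI(\gamma u)) + \gamma(\cI u - u) + \cI(\gamma u - \gamma\,\cI u)$ and applying Lemma~\ref{FouApprox} to each piece yields a bound of order $N^{-(m-\sigma)}\max\{|u|_m,|\gamma u|_m\}$; the factor $\|w\|_N \lesssim \|w\|_1$ (norm equivalence of discrete and continuous $L^2$ norms on $V_N$, standard for the trapezoidal rule on trigonometric polynomials) then gives the claimed bound after dividing by $\|w\|_1$.

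The main obstacle is the careful treatment of the quadrature/aliasing error for the variable-coefficient product $\gamma\,\cI u$: one must track that the loss of derivatives is only what is needed to reach the exponent $m-\sigma$, and that the constants do not depend on $N$. This requires the standard but slightly delicate estimate $\|f - \cI f\| \lesssim N^{-m}|f|_m$ applied with $f = \gamma u$ (smoothness of $\gamma$ and the compact support of $u$ guarantee $\gamma u \in H^m_p(\Omega)$ with $|\gamma u|_m$ controlled), plus the $L^\infty \to L^2$ passage via the Sobolev embedding $m > d/2$, which is exactly where $\sigma = \max\{1,d/2\}$ enters. Everything else — the coerciveness \eqref{a_coer}, the conformity $V_N\subset V$, and assembling the two bounds — is routine.
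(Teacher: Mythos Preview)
Your proposal is correct and follows essentially the same route as the paper: choose $v=\cI u$, use Lemma~\ref{FouApprox} for $\|u-\cI u\|_1\lesssim N^{-(m-1)}|u|_m$, observe that the discrete and continuous inner products agree on $V_N$ for the gradient part so the consistency error reduces to the $\gamma$-term, and then control $\lag\gamma\,\cI u,w\rag-\lag\gamma\,\cI u,w\rag_N$ by passing through $\gamma u$ and $\cI(\gamma u)$, picking up the $N^{-(m-d/2)}$ loss from an $L^\infty$ interpolation estimate. One small bookkeeping slip: the term you write as $\lag\gamma\,\cI u-\cI(\gamma\,\cI u),w\rag_N$ actually vanishes (interpolant agrees at nodes); the nonzero piece is the \emph{continuous} inner product $\lag\cI(\gamma\,\cI u)-\gamma\,\cI u,\,w\rag$, which is exactly what you then bound via your three-term splitting, so the argument goes through unchanged.
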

\begin{proof}
From Lemma \ref{FouApprox}, we have
\begin{align}
\|u-\cI u\|_1 \lesssim N^{-(m-1)}|u|_m.
\end{align}
From the definition of $X_N$, we can obtain following two equations,
\bea\label{cont_prop}
b_N(v_N,w_N) = b(v_N,w_N), \quad \lag \nabla v_N,\nabla w_N\rag_N = \lag \nabla v_N,\nabla w_N\rag, \quad \forall u_N,v_N\in V_N.
\eea
Based on \eqref{cont_prop} and \eqref{cont_prop}, we obtain
$$\big|a(v_N,w_N)-a_N(v_N,w_N)\big| = \big|\lag\gamma v_N,w_N\rag - \lag\gamma v_N,w_N\rag_N\big|,\quad \forall\, v_N, w_N\in V_N.$$
Hence, combing $\cI u\in V_N$, the following inequalities hold
\begin{align*}
&\big|a(\cI u,w_N)-a_N(\cI u,w_N)\big| = \big|\lag\gamma\, \cI u,w_N\rag - \lag\gamma\, \cI u,w_N\rag_N\big|\\
\leq & \big| \lag\gamma\, \cI u,w_N\rag - \lag\gamma\, u,w_N\rag\big|
+\big|\lag\gamma\, u,w_N\rag - \lag\gamma\, u,w_N\rag_N \big| +\big|\lag\gamma\, u,w_N\rag - \lag\gamma\, \cI u,w_N\rag_N \big|\\
:= & \ I_1+I_2+I_3.
\end{align*}
For the first term, combing Lemma \ref{FouApprox}, we have
\begin{align*}
I_1 & = \big| \lag\gamma\, \cI u,w_N\rag - \lag\gamma\, u,w_N\rag\big|\leq \|\gamma\|_\infty\|\widetilde u-u\|_0\|w_N\|_0 \lesssim N^{-m}|u|_m\|w_N\|_1.
\end{align*}
Since $\supp\{u\}\subsetneq \Omega$, $\supp\{\gamma u\}\subsetneq\Omega$, $\gamma u$ is smooth and periodic on $\Omega$. Setting $w_N({x}) = \sum_{k=-N/2}^{N/2-1}c_kW_k({x})$, and combining \cite[Theorem 2.3]{SpectralBkShen}, the following estimates hold
\begin{align*}
I_2 & = \big|\lag\gamma\, u,w_N\rag - \lag\gamma\, u,w_N\rag_N \big| \leq  \sum\limits_{k=-N/2}^{N/2-1}|c_k| \big|\lag\gamma\, u,W_k\rag - \lag\gamma\, u,W_k\rag_N\big|\\
&\leq \sum\limits_{k=-N/2}^{N/2-1} |c_k|\left|(\widehat{\gamma u})_k - (\widetilde{\gamma u})_k\right| \leq \sqrt{\sum_j\left|(\widehat{\gamma u})_j - (\widetilde{\gamma u})_j\right|^2}\sqrt{\sum_j |w_j|^2}\\
&\lesssim N^{-m}|\gamma u|_m\|w_N\|_0 \leq N^{-m}|\gamma u|_m\|w_N\|_1,
\end{align*}
where $(\widehat{\gamma u})_k = \int_\Omega \gamma({x})u({x})e^{-\im k{x}}\dif{x}$. For the third term, combing Lemma \ref{FouApprox}, we obtain
\begin{align*}
I_3 & = \big|\lag\gamma\, u,w_N\rag - \lag\gamma\, \cI u,w_N\rag_N \big| \leq \|\gamma\|_\infty\|u-\widetilde u\|_\infty\|w_N\|_0 \lesssim N^{-(m-d/2)}|u|_m\|w_N\|_1.
\end{align*}
Based on the definition of $\delta_N(u)$ and above error estimates, we have
\begin{align*}
\delta_N(u) &\lesssim N^{-(m-1)}|u|_m + N^{-m}|u|_m + N^{-m}|\gamma u|_m + N^{-(m-d/2)}|u|_m \\
& \lesssim N^{-(m-\sigma)}\max\{|u|_m, |\gamma u|_m\}.
\end{align*}

\end{proof}

\vspace{-0.3cm}
Based on  \eqref{a_coer} and the general theory of the error estimates for the eigenvalue problems by the approximation method \cite[Section 8]{Babuska_book} and \cite[Section 1]{SpectralBkShen}, we have the following convergence results. 

\begin{lemma}\label{lem:ErrEsti}
For any eigenpair approximation $(\lambda_N,u_N)$ of \eqref{discrte_form}, there is
an eigenpair  $(\lambda,u)$ of \eqref{wf-evp} corresponding to $\lambda$ such that
\beas
\|u-u_N\|_{1} \lesssim \delta_N(u),\quad  \|u-u_N\| \lesssim \zeta_N\|u-u_N\|_{1},\quad |\lambda-\lambda_N| \lesssim \|u-u_N\|_1^2,
\eeas
where $\zeta_N\rightarrow 0 \mbox{\ as\ }N\rightarrow \infty$.
\end{lemma}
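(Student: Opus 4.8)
The plan is to recast the continuous and discrete weak forms \eqref{wf-evp} and \eqref{wf-a} as a spectral problem for a compact self-adjoint solution operator, and then invoke the abstract Babu\v{s}ka--Osborn approximation theory \cite[Section 8]{Babuska_book}, \cite[Section 1]{SpectralBkShen}, so that the three asserted estimates emerge once its hypotheses are checked in the present Fourier pseudo-spectral setting. Concretely, I would introduce the continuous solution operator $T:V\to V$ defined by $a(Tf,v)=b(f,v)$ for all $v\in V$; it is well defined and bounded by Lax--Milgram thanks to the $V$-ellipticity \eqref{a_coer} and the boundedness $a(v,v)\le C_a\|v\|_1^2$, it is self-adjoint in the $a$-inner product because $b$ is symmetric, and it is compact because it factors through the compact embedding $V\hookrightarrow L^2(\Omega)$. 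An eigenpair $(\lambda,u)$ of \eqref{wf-evp} is then exactly a pair with $Tu=\mu u$, $\mu=1/\lambda$, so the spectrum is discrete and the eigenfunctions form an $a$-orthogonal basis. The discrete operator $T_N:V_N\to V_N$ is defined analogously through $a_N$; using Lemma~\ref{lem:equivalent} to pass between the collocation form \eqref{discrte_form} and the variational form \eqref{wf-a}, and invoking \eqref{cont_prop}, the mass form and the gradient form are reproduced exactly on $V_N$, i.e. $b_N=b$ and $\lag\nabla v_N,\nabla w_N\rag_N=\lag\nabla v_N,\nabla w_N\rag$, so that the \emph{only} variational crime is the aliasing error $\lag\gamma v_N,w_N\rag_N-\lag\gamma v_N,w_N\rag$ in the potential term.

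For the $H^1$ estimate I would first establish quasi-optimality for the associated source problem in the presence of numerical integration, that is, a first Strang lemma: writing $e=u-u_N$ and using \eqref{a_coer} together with the exactness of $b_N$ on $V_N$, one bounds $\|e\|_1$ by the best approximation $\inf_{v_N\in V_N}\|u-v_N\|_1$ plus the consistency term $\sup_{w_N\in V_N}|a(v_N,w_N)-a_N(v_N,w_N)|/\|w_N\|_1$, which is precisely the infimum defining $\delta_N(u)$. Lifting this from the source problem to the eigenproblem uses the spectral gap of $T$ (separation of $\mu$ from the rest of the spectrum) so that the discrete spectral projection is stable; this yields $\|u-u_N\|_1\lesssim\delta_N(u)$.

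For the $L^2$ estimate I would run an Aubin--Nitsche duality argument: let $\psi\in V$ solve the dual source problem $a(\psi,v)=b(e,v)$ for all $v\in V$, test with $e$, subtract an arbitrary $v_N\in V_N$, and exploit the exactness $b_N=b$ on $V_N$ to reduce the error to the product of $\|e\|_1$ with the dual approximation factor $\zeta_N:=\sup_{\|f\|\le1}\inf_{v_N\in V_N}\|Tf-v_N\|_1$; compactness of $T$ together with the approximability of Lemma~\ref{FouApprox} forces $\zeta_N\to0$, giving $\|u-u_N\|\lesssim\zeta_N\|u-u_N\|_1$. For the quadratic eigenvalue estimate I would use the Rayleigh-quotient identity: since $b(u,u)=b_N(u_N,u_N)=b(u_N,u_N)=1$, a direct expansion gives $\lambda_N-\lambda=a(e,e)-\lambda\,b(e,e)+\big(a_N(u_N,u_N)-a(u_N,u_N)\big)$, where the first two terms are manifestly $O(\|e\|_1^2)$, and the last is a single potential-aliasing term; because $\gamma$ is smooth and $|u_N|^2$ is a trigonometric polynomial of bounded degree, this quadrature error is spectrally small, hence dominated by $\|e\|_1^2$, whence $|\lambda-\lambda_N|\lesssim\|u-u_N\|_1^2$.

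The main obstacle is handling the variational crime from the potential-term quadrature: because $a_N\neq a$ on $V_N$, the Galerkin orthogonality $a(u-u_N,v_N)=0$ fails, and one must instead carry the consistency error $|\lag\gamma v_N,w_N\rag-\lag\gamma v_N,w_N\rag_N|$ and fold it cleanly into $\delta_N(u)$ in each of the three steps; the analogous bound for a single test function was already obtained in the proof of Lemma~\ref{lem_convergence}, so the remaining work is to show these perturbations do not spoil the stability of the discrete spectral projection or the duality argument, and to confirm via the smoothness of $\gamma$ that the potential-aliasing term in the eigenvalue identity is of higher (spectral) order. This, together with verifying the dual approximation property $\zeta_N\to0$ from the compactness of $T$ and Lemma~\ref{FouApprox}, is exactly what the cited abstract theory supplies, so the proof reduces to checking its hypotheses rather than establishing new estimates.
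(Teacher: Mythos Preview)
Your proposal is correct and follows precisely the route the paper itself takes: the paper does not give a self-contained proof of this lemma but simply invokes the abstract Babu\v{s}ka--Osborn eigenvalue approximation theory \cite[Section~8]{Babuska_book} and \cite[Section~1]{SpectralBkShen}, after recording the coercivity \eqref{a_coer} and the quadrature-exactness identities \eqref{cont_prop}. Your sketch (solution operator $T$, compactness via $H^1\hookrightarrow L^2$, first Strang lemma for the $H^1$ bound, Aubin--Nitsche for the $L^2$ bound, and the Rayleigh-quotient identity plus aliasing control for the quadratic eigenvalue estimate) is exactly the content of those references specialized to the present pseudo-spectral setting, so there is nothing to add.
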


\bibliographystyle{plain}

\end{document}